\DeclareRobustCommand{\cev}[1]{%
  {\mathpalette\do@cev{#1}}%
}
\newcommand{\do@cev}[2]{%
  \vbox{\offinterlineskip
    \sbox\z@{$\m@th#1 x$}%
    \ialign{##\cr
      \hidewidth\reflectbox{$\m@th#1\vec{}\mkern4mu$}\hidewidth\cr
      \noalign{\kern-\ht\z@}
      $\m@th#1#2$\cr
    }%
  }%
} 
\theoremstyle{plain}
\newtheorem{thm}{Theorem}[section]
\newtheorem{lem}[thm]{Lemma}
\newtheorem{prop}[thm]{Proposition}
\newtheorem{cor}[thm]{Corollary}
\theoremstyle{definition}
\newtheorem{defn}[thm]{Definition}
\newtheorem{defn2}[thm]{Definition/Lemma}
\theoremstyle{remark}
\newtheorem{rem}[thm]{Remark}
\theoremstyle{definition}
\newtheorem{ex}[thm]{Example}
\newtheorem{nt}[thm]{Notation}
\newtheorem{claim}[thm]{Claim}
\newtheorem{conv}[thm]{Convention}
\newcommand{\ZZ}{\mathbb{Z}}
\renewcommand{\SS}{\mathbb{S}}
\newcommand{\lag}{\langle}
\newcommand{\rag}{\rangle}
\newcommand{\hh}{,\hdots,}
\renewcommand{\AA}{\mathbb{A}}
\newcommand{\aaa}{\alpha}
\newcommand{\bs}{\backslash}
\newcommand{\ep}{\epsilon}
\newcommand{\fa}{\forall}
\newcommand{\fb}{\mathfrak{B}}
\newcommand{\ti}{\widetilde}
\newcommand{\G}{\Gamma}
\newcommand{\wW}{\widetilde{W}}
\newcommand*{\shom}{\mathscr{H}\kern -.5pt om}
\newcommand{\cF}{\mathcal{F}}
\newcommand{\cA}{\mathcal{A}}
\newcommand{\cB}{\mathcal{B}}
\newcommand{\cI}{\mathcal{I}}
\newcommand{\cJ}{\mathcal{J}}
\newcommand{\cP}{\mathcal{P}}
\newcommand{\sg}{\sigma}
\newcommand{\ot}{\otimes}
\newcommand{\op}{\oplus}
\newcommand{\cd}{\cdot}
\let \La=\Lambda
\let \ot=\otimes
\let \op=\oplus
\let \ov=\overline
\let \wt=\widetilde
\numberwithin{equation}{section}
\renewcommand*\env@matrix[1][*\c@MaxMatrixCols c]{%
  \hskip -\arraycolsep
  \let\@ifnextchar\new@ifnextchar
  \array{#1}}
\DeclareMathOperator\Conv{Conv}
\DeclareMathOperator\rk{rank}
\DeclareMathOperator\Gr{Gr}
\DeclareMathOperator\GL{GL}
\DeclareMathOperator\SL{SL}
\DeclareMathOperator\im{Im}
\DeclareMathOperator\id{id}
\DeclareMathOperator\Id{Id}
\DeclareMathOperator\End{End}
\DeclareMathOperator\aut{Aut}
\DeclareMathOperator\spec{Spec}
\DeclareMathOperator\h{Hom}
\DeclareRobustCommand{\SkipTocEntry}[5]{}
\begin{document}
\title{Degenerations of Grassmannians via lattice configurations II}
\author{Xiang He, Naizhen Zhang}
\begin{abstract}
This paper is a continuation of our study of degenerations of Grassmannians in \cite{he2023degenerations}, called \textit{linked Grassmannians}, constructed using convex lattice configurations in Bruhat-Tits buildings. We describe the geometry and topology of linked Grassmannians associated to a larger class of lattice configurations, generalizing the results in \textit{loc.cit.}. 
In doing so, we utilize results on the geometry of quiver Grassmannians and affine Schubert varieties,  and make comparison to the study of certain local models of Shimura varieties.   
\end{abstract}
\address[Xiang He]{Yau Mathematical Sciences Center, Tsinghua University, Haidian District, Beijing, China, 100084}
\email{xianghe@mail.tsinghua.edu.cn}
\thanks{Xiang He is supported by the NSFC grant 12301057 and the National Key R$\&$D Program of China 2022YFA1007100, Naizhen Zhang was supported by the DFG Priority Programme 2026 ``Geometry at infinity".}
\address[Naizhen Zhang]{Fairleigh Dickinson University Vancouver Campus, 842 Cambie St, Vancouver, BC V6B 2P6, Canada}
\email{n.zhang1@fdu.edu}
\maketitle
\tableofcontents

\section{Introduction}
This paper is the second one in a sequel studying degenerations, $LG_r(\Gamma)$, of the Grassmannian, called \textit{linked Grassmannians}, that are specified by convex lattice configurations $\Gamma$ in the Bruhat-Tits building of the special linear group, where $r$ is the dimension of the sub-vector spaces. In \cite{he2023degenerations}, we studied a special class of such degenerations, associated to what we call \textit{locally linearly independent} lattice configurations, for which the topology and geometry of the special fibers $LG_r(\Gamma)_0$ can be largely understood using techniques of quiver representations. Guided by our original motivation coming from the study of moduli of curves, we established new smoothing theorems for degenerations of linear series on smooth projective curves. 
Meanwhile, as mentioned in \textit{loc.cit.}, there is a natural connection between linked Grassmannians and certain local models of Shimura varieties. More precisely, when the configuration $\Gamma$ is a face of the standard alcove of the Bruhat-Tits building (Notation~\ref{nt:standard alcove}, or equivalently, a simplex in the building) and making the same assumptions on the underlying field, they coincide. In the study of the latter, there is a long-standing technique that involves embedding the special fiber of the local model into some affine flag variety (see, for example, \cite{gortz2001flatness},\cite{gortz2003flatness},\cite{pappas2003local},\cite{PR2}). With this in mind, we naturally ask the following question: can we apply both the technique of quiver representation and the structure of affine flag varieties to generalize the results of our previous paper?

\medskip
The current paper is a summary of some first-stage structural results we managed to establish using a combination of the aforementioned techniques.  In particular, while the family of locally linearly independent lattice configurations is almost disjoint with the family of faces of standard alcoves (see Example~\ref{ex:quiver of a simplex} and~\ref{ex:locally linearly indep}, in fact,  the intersection of the two families is the set of convex configurations that contains at most two lattice classes), we introduce the notion of \textit{locally weakly independent} lattice configurations, generalizing both cases at the same time. Therefore, the corresponding linked Grassmannians include the local models mentioned above.

\medskip 

The connection between the geometry of affine flag varieties for a reductive group over a discretely valued field and quiver representations is natural and has been well-studied in previous literature. Lusztig first established that orbits of representations of cyclic quivers are type-$A$ affine Schubert cells (\cite[Theorem 11.3]{lusztig1990canonical}). From the quiver representation perspective, such loci can be characterized either using rank vectors (Notation~\ref{nt:rank vector}) or underlying representation types. As affine Schubert cells, they are orbits of parahoric subgroups of the ambient reductive group. The former perspective provides us with convenience in analyzing the relations among these loci  using linear-algebro tools. For example, we can understand whether some orbit is contained in the closure of another by understanding whether the representation type of the latter specializes to the representation type of the former. On the other hand, built on the powerful Bruhat-Tits theory, one can provide the answer to this closure-containment problem in terms of the \textit{Bruhat order}.

\medskip
Originally motivated by the study of moduli of curves, we are naturally led to the question of generalizing the aforementioned connection to the case where (on the quiver-representation side) the underlying quiver is more complicated than a cycle, which (on the affine flag variety side) corresponds to fibered products of affine Schubert cells. The complexity of the induced quiver is due to the fact that the vertices of the quiver correspond to the multi-degrees of limit linear series on the (reducible) special fiber of a \textit{smoothing family} (Definition~\ref{defn:smoothing family}) of curves. If we want to consider reducible curves with more complicated topological configurations, we will be facing with more intricate collections of multidegrees. 
Intuitively, this is the combinatorial manifestation of the more convoluted relations among line bundles with different multidegrees that arise as degenerations of the same line bundle on the generic fiber.
\medskip

Towards this direction, for an arbitrary convex lattice configuration $\Gamma$, we consider two stratifications of the special fiber $LG_r(\Gamma)_0$ of its associated linked Grassmannian, which is naturally identified with certain quiver Grassmannian:
\begin{itemize}
    \item the \textit{quiver stratification} (Notation~\ref{nt:quiver grass}) induced by isomorphic classes of quiver-representations;
    \item the \textit{rank-vector stratification} (Notation~\ref{nt:rank vector}) according to the ranks of compositions of morphisms in a quiver representation.
\end{itemize}
For configurations contained in one apartment of the building, we introduce
\begin{itemize}
    \item the \textit{Bruhat stratification} (Definition~\ref{defn:Bruhat_strata}) induced by fiber products of Schubert cells.
\end{itemize}
For the above stratifications, we investigate the corresponding closure-containment relations between the strata, i.e., their \textit{topological order} (Notation~\ref{nt:topological order}). Moreover, for the Bruhat strata, we introduce a notion of \textit{generalized Bruhat order} (Definition~\ref{defn:order1}) in terms of the ``fiber product" of the usual Bruhat order, while for the rank-vector strata, there is a natural \textit{rank-vector order} (Notation~\ref{nt:rank vector}) induced by the ``dominance" between vectors in $\mathbb Z^d$. Our main results then lie in the comparison of the aforementioned stratifications and their associated order.
\medskip

Historically, we know that the Bruhat stratification is identified with the quiver stratification when $\Gamma$ is a simplex (\cite[Theorem 11.3]{lusztig1990canonical}), and it is the same as rank-vector stratification when $\Gamma$ is the standard alcove (\cite[Proposition 2.6]{gortz2010supersingular}). Moreover, a sequence of recent papers \cite{feigin2021totally,feigin2023generalized} by Feigin et. al. interprets the Bruhat stratification, again, when $\Gamma$ is the standard alcove, as the stratification of a certain quiver Grassmannian induced by the orbits of the automorphism group of the ambient quiver representation. They also introduced several orders on the strata, and studied their connection to the Bruhat order. It turns out that this orbit stratification agrees with the rank-vector order even when $\Gamma$ is just a simplex (see Remark~\ref{rem:rank n orbit strata}) hence is the same as our quiver stratification by the main theorem below.

\medskip
In this paper, we show that, for an arbitrary lattice configuration contained in an apartment, the rank-vector stratification agrees with the Bruhat-stratification (Lemma~\ref{lem:bruhat order and rank vector}), and that the generalized Bruhat order and rank vector order also coincide (Corollary~\ref{cor:bruhat order=rank order}). The strategy is to reduce the question to the case when $\Gamma$ is a simplex, using specific features of the underlying quiver Grassmannian of $LG_r(\Gamma)_0$ developed in \cite{he2023degenerations} (see also Section~\ref{subsec:quiver grass preliminary}). 
For locally weakly independent configurations, which are known to be contained in an apartment (Theorem~\ref{thm:decomposition}), 
there is much more we can say:

\begin{thm}\label{thm:intro 1}
Let $\Gamma$ be a locally weakly independent lattice configuration.  Then 
\begin{enumerate}
    \item {$\mathrm{(Theorem~\ref{thm:decomposition})}$} every point of $LG_r(\Gamma)_0$ as a quiver representation decomposes into a direct sum of irreducible representations of dimension at most $(1,\dotsc,1)$; and
    \item {$\mathrm{(Theorem~\ref{thm:stratification})}$} the quiver stratification of $LG_r(\Gamma)_0$ is the same as the rank-vector and Bruhat stratifications, and the topological order agrees with the rank-vector order and generalized Bruhat order; and
    \item {$\mathrm{(Proposition~\ref{prop:projective dense}, 
    Theorem~\ref{thm:flatness})}$} $LG_r(\Gamma)$ is Cohen-Macaulay, integral, and flat over the base with reduced fibers. Moreover, the irreducible components of $LG_r(\Gamma)_0$ are indexed by isomorphic classes of projective quiver representations.
\end{enumerate}
 \end{thm}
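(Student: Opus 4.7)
The three parts are tightly interlinked, with part (1) providing the structural input that drives parts (2) and (3). For part (1), the plan is to exploit the local weak independence hypothesis to show that at every lattice class $[L] \in \Gamma$, the incoming and outgoing linked linear maps of a quiver representation $W \in LG_r(\Gamma)_0$ can be put in a local normal form compatible with a direct-sum decomposition. Because the configuration lies inside a single apartment (the first conclusion of Theorem~\ref{thm:decomposition}), one can choose compatible bases with respect to which the filtrations at each vertex are coordinate-aligned. Inductively peeling off one-dimensional sub-representations supported on a connected subconfiguration, and using weak independence to ensure that such summands split off, should give a decomposition into irreducibles whose dimension vectors have entries in $\{0,1\}$. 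The main obstacle is ensuring that local splitting propagates globally, which will likely require a rank-counting argument around the cycles in the underlying quiver to exclude higher-dimensional indecomposables from the Krull--Schmidt decomposition.

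For part (2), the equality of the rank-vector stratification with the Bruhat stratification (and of the corresponding orders) holds for any configuration contained in an apartment, by Lemma~\ref{lem:bruhat order and rank vector} and Corollary~\ref{cor:bruhat order=rank order}. It therefore suffices to match the quiver stratification with the rank-vector stratification. Given part (1), two representations $W, W' \in LG_r(\Gamma)_0$ have the same rank vector if and only if their Krull--Schmidt decompositions use each interval-supported irreducible with the same multiplicity, since the rank vector of a thin irreducible determines its support and the rank vector of a direct sum is additive. Krull--Schmidt then upgrades equality of multiplicities to an isomorphism of quiver representations, so the quiver and rank-vector strata coincide pointwise, and the topological orders automatically agree.

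For part (3), the irreducible components of $LG_r(\Gamma)_0$ should be identified with closures of quiver strata corresponding to projective objects in the quiver representation category. Projectives are rigid, so the closure of each such stratum is irreducible of the expected dimension; via part (2) this dimension can be computed using the generalized Bruhat length of the associated fiber product of Schubert cells. Matching this count with the dimension of the generic Grassmannian fiber yields flatness. For reducedness and Cohen-Macaulayness, the strategy is to embed $LG_r(\Gamma)_0$ into a suitable affine flag variety via the local-models framework alluded to in the introduction, and invoke the known Cohen-Macaulayness and reducedness of the relevant affine Schubert varieties, whose closure relations are precisely those captured by part (2). The main difficulty I anticipate is the Cohen-Macaulay claim globally: one must argue that the fibered-product Bruhat-stratum structure inherits Cohen-Macaulayness from each factor, which may demand either a Frobenius-splitting argument or an explicit component-by-component resolution.
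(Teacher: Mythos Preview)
Your proposal has a genuine gap in part (2), and a circularity. You invoke Corollary~\ref{cor:bruhat order=rank order} to equate the generalized Bruhat order with the rank-vector order, but that corollary is \emph{deduced from} Theorem~\ref{thm:stratification} (its proof reduces to the simplex case and then applies Theorem~\ref{thm:stratification} there), so you cannot use it as an input. More seriously, even granting that quiver strata coincide with rank-vector strata, you assert that ``the topological orders automatically agree.'' This only says that the closure relation among quiver strata equals the closure relation among rank-vector strata, which is tautological once the strata coincide; it does \emph{not} show that this closure relation matches the combinatorial rank-vector order $\Phi(M)\le\Phi(M')$. That implication is the substantive content of Theorem~\ref{thm:stratification}(2), and the paper proves it by an explicit one-parameter deformation: given $\Phi(M)<\Phi(M')$, one locates two irreducible summands of $M$ supported on overlapping arcs of a cycle in $Q(\Gamma)$ and perturbs a generator to merge them, producing a family $M^t$ with $\Phi(M)<\Phi(M^t)\le\Phi(M')$ (the delicate Claim~\ref{cl:inequality} controls which rank entries actually increase). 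Nothing in your outline replaces this step.

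Two smaller points. In part (1) you treat ``$\Gamma$ lies in an apartment'' as an input for choosing coordinate-aligned bases, but in the paper this is a \emph{consequence} of the decomposition (one shows $M_\Gamma$ is projective and then applies Proposition~\ref{prop:ambient representation}(2)); the decomposition itself is built intrinsically by splitting each $U_v$ as $V_v\oplus\bigoplus_{u\in I_v}f_{u,v}(U_u)$ subject to a kernel-compatibility condition (equation~\eqref{eq:thmdecompose}) and then invoking Lemmas~\ref{lem:decomposition generate} and~\ref{lem:decompose independent}. In part (3) your plan for Cohen--Macaulayness via Frobenius splitting of fibered products of Schubert varieties is plausible but speculative; the paper instead proceeds by induction on the number of maximal simplices in $\Theta(\Gamma)$, using that $\Gr(\mathbf r,M_\Gamma)$ sits as a local complete intersection inside $\Gr(\mathbf r,M_{\Gamma'})\times\Gr(\mathbf r,M_\Delta)$ for a leaf simplex $\Delta$, with the base case (a single simplex) supplied by known results on local models. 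Irreducibility comes from the rational dominant map $\prod_{i=1}^r LG_1(\Gamma)\dashrightarrow LG_r(\Gamma)$, and reducedness from Cohen--Macaulayness plus generic smoothness at projective points, rather than from dimension-matching alone.
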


Part (1) and (3) of Theorem~\ref{thm:intro 1} generalize the main conclusion in \cite{he2023degenerations} concerning locally linearly independent lattice configurations.  In Section~\ref{sec:r1},
a similar result as Theorem~\ref{thm:intro 1} (2) is obtained for arbitrary $\Gamma$ when $r=1$, i.e., for linked Grassmannians as degenerations of projective spaces. 
The decomposition result in Theorem~\ref{thm:intro 1} (1) is the key tool that connects the quiver stratification and rank-vector stratification, as well as the topological order and rank-vector order; it also provides a way to analyze whether a quiver representation in $LG_r(\Gamma)_0$ can be realized as the degeneration of a point in the Grassmannian, hence facilitating the deduction of Theorem~\ref{thm:intro 1} (2) and (3). As another application, we compute all possible rank vectors that appear in the special fiber of local models
(Proposition~\ref{prop:simplex strata}),  providing an alternative perspective to the existing result in \cite[\S~2.5]{gortz2010supersingular}.

\medskip
We would also like to mention that the irreducibility of general linked Grassmannians plays an important role in the smoothing theorem of limit linear series (see Section~\ref{sec:lls}). 
Besides our previous paper (\cite{he2023degenerations}), Esteves, Santos and Vital also applied techniques of quiver representations to the study of degenerations of linear series (\cite{esteves2021quiver1}, \cite{esteves2021quiver2}). Based on our own experience, we would like to make some general comments on this approach. First of all, the approach is most efficient if the relevant representation types are all sums of irreducible representations of dimension at most $(1,1\hh 1)$. If this condition is not met, it may still be possible to classify the interesting representation types and draw interesting geometric conclusions, including the irreducibility of linked Grassmannians. We will compute some further examples in this direction in a subsequent paper \cite{hz3}. In particular, we will show that the loci of projective representations dominate the loci of other representation types for certain $LG_r(\Gamma)_0$ where $\Gamma$ is not locally weakly independent. 

\medskip
On the other hand, we believe that in order to establish stronger smoothing theorems (for an example, see Theorem \ref{thm:smoothing of lls}) in this field, it is necessary to study more general linked Grassmannians, for whose special fibers the representation-theoretic analysis could be too complicated using existing methods. \textit{A priori}, the rank vectors 
may fail to control the topology of the special fibers, which is a phenomenon that did not occur in our previous studies. Meanwhile, we believe that a detailed study of the geometry of the linked Grassmannian associated to the lattice configuration $\SS_d\cd \omega$, where $\omega$ is the standard alcove in the Bruhat-Tits building of $\SL_d$, will likely produce the essential ingredient for some general smoothing theorem. To this end, we gradually came to realize that the theory of affine flag varieties provides us with certain structures in the topological data of the special fibers, even in the most general situations. Although a thorough exploration of the general case is beyond the scope of the current paper, we would like to promote the study of related geometric problems. Among other things, the geometry of fibered products of certain affine Schubert cells seems to be an interesting topic of its own right, which we would very much be willing to return to, again, in \cite{hz3}. 

\medskip
In a related but slightly different direction, it is also perceivable that we may further extend our scope and study similar geometric objects whose underlying algebraic group is not of type A. We believe the most promising case is the symplectic group, drawing from multiple findings in the literature. First of all, Osserman and Teixidor-i-Bigas constructed a version of \textit{linked symplectic Grassmanian} in \cite{osserman2014linked}, which laid the foundation for advancements in higher-rank Brill-Noether theory for algebraic curves (\cite{Osp}). On the other hand, local models of Shimura varieties for symplectic groups are well-studied (\cite{gortz2003flatness}), from which we can borrow many group-theoretic techniques again.\footnote{It is worth mentioning that the schemes constructed by Osserman and Teixidor-i-Bigas do not seem to be isomorphic to local models of Shimura varieties in any obvious way. This is quite different from the type A case.} Seeing how local models of Shimura varieties were first defined in terms of moduli of lattice chains (\cite{rapoport2016period}), but later generalized using group-theoretic definitions (\cite{pappas2013local}, \cite{fakhruddin2022singularities}), we think this two-perspective framework will be essential for defining and studying those potential new objects.


\subsection*{Roadmap} The plan for the remainder of this paper is as follows. In Section 2, we review the necessary basics of Bruhat-Tits theory and affine flag varieties, as well as the quiver Grassmannian associated to a convex lattice configuration in the Bruhat-Tits building. In particular, the existing (and some new) structural results for linked Grassmannians associated to a simplex are summarized in 
Theorem \ref{thm:open_cover1}, some of which is adapted to our setting, with further detail provided. In Section 3, we establish some structural results that hold for linked Grassmannians associated to \textit{arbitrary} convex lattice configurations inside one apartment of the Bruhat-Tits building. In Proposition \ref{prop:std_open}, we establish the existence of a natural open covering of a such linked Grassmannian, which is the object that governs the topology and some geometry. In addition, we construct the \textit{Bruhat stratification} of the special fiber of the linked Grassmanian which is equipped with the \textit{generalized Bruhat order}. Each stratum is contained in the corresponding open subset of the aforementioned covering.  The foundation is the fact that the special fiber of the linked Grassmannian can always be embedded into a fibered product of affine flag varieties.
In Section 4 we introduce the notion of \textit{locally weakly independent} lattice configuration, and prove Theorem~\ref{thm:intro 1} using the technique of quiver representations, thus justifying the relevance of the generalized Bruhat order. In Section 5, we consider the linked Grassmannian $LG_r(\Gamma)$ for arbitrary $\Gamma$ in the case $r=1$ and prove again that the generalized Bruhat order agrees with the rank-vector order as well as the topological order.
Eventually in Section 6, we derive a smoothing theorem for limit linear series with certain multidegrees over reducible curves whose dual graph is the complete graph $K_n$. 

\subsection*{Acknowledgement}\addtocontents{toc}{\protect\setcounter{tocdepth}{1}}
We would like to thank Ulrich G\"ortz for helpful conversations. 

\subsection*{Conflict of interest}\addtocontents{toc}{\protect\setcounter{tocdepth}{1}}
The authors state, that there is no conflict of interest.

\subsection*{Data availability statement}\addtocontents{toc}{\protect\setcounter{tocdepth}{1}}
No datasets were created or analysed during the current study.

\subsection*{Notation and Conventions}\addtocontents{toc}{\protect\setcounter{tocdepth}{1}}
\begin{nt}
Throughout the paper, $\kappa$ will always be an algebraically closed field, $R$ will always be a discrete valuation ring with residue field $\kappa$ and fraction field $K$, and $\pi$ will always be a uniformizer of $R$. We do not assume $\kappa$ is of characteristic zero unless otherwise stated. The ring of power series over $\kappa$ will be denoted by $\kappa[\![t]\!]$ and its fraction field by $\kappa(\!(t)\!)$.
\end{nt}


\begin{nt}
For a graph $G$, we denote by $V(G)$ and $E(G)$ the set of vertices and edges of $G$, respectively. If $\ell$ is a path (e.g., a directed edge) in $G$, we denote $s(\ell)$, $t(\ell)$ for its source and target, respectively. By a cycle we mean a path $\ell$ such that $s(\ell)=t(\ell)$ is the only repeating vertex. 
\end{nt}

\begin{nt}\label{nt:quivers}
Throughout the paper, quiver Grassmannians are defined over $\kappa$. Let $Q$ be a quiver. We denote by $Q_0$ (resp. $Q_1$) the set of vertices (resp. arrows) of $Q$.
Let $M$ be a representation of a quiver $Q=(Q_0,Q_1)$ with the data $(f_{\ell})_{\ell\in Q_1}$ of linear maps between the underlying vector spaces $(M_i)_{i\in Q_0}$. We will often write the representation as $M=(M_i)_i$ whenever $Q_0$ and $(f_\ell)_{\ell\in Q_1}$ are clear from the context, and sometimes use subscript $v$ instead of $i$ for vertices in $Q_0$.
 A \textit{sub-representation} of $M$ is represented by a tuple of vector spaces $(V_i)_i$ such that $V_i\subset M_i$ and $f_{\ell}(V_{s(\ell)})\subset V_{t(\ell)}$ for all $\ell\in Q_1$. For a path $\ell'=\ell_n\cdots \ell_1$ in $Q$ where $\ell_i\in Q_1$ we denote $f_{\ell'}$ the compositions of all $f_{\ell_i}$. 
\end{nt}

\begin{nt}\label{nt:quiver grass}
    Let $M$ be a representation of a quiver $Q$. Let $\underline x=(x_i)_{i\in Q_0}$ be a dimension vector. The quiver Grassmannian of dimension $\underline x$ associated to $M$ is denoted by $\Gr(\underline x,M)$. There is a natural stratification of $\mathrm{Gr}(\underline x,M)$ given by the family of subsets $(\mathcal S_{M'})_{[M']}$, where $[M']$ runs over all isomorphic classes of sub-representations of $M$ of dimension $\underline x$ and $\mathcal S_{M'}\subset \mathrm{Gr}(\underline x,M)$ is the subset parametrizing sub-representations isomorphic to $M'$. We call this the \textit{quiver stratification} of $\Gr(\underline x,M)$.
\end{nt}

\begin{nt}\label{nt:dimension vector number}
    We reserve the symbol $\mathbf r$ and $\mathbf 1$ respectively for the dimension vector $(r\hh r)$  and $(1\hh 1)$ of representations of a given quiver.
\end{nt}

\begin{nt}\label{nt:topological order}
    Let $X$ be a topological space. The closure of a subspace $Y$ of $X$ is denoted by $Y^c$. Suppose $(S_x)_x$ is a stratification of $X$. We define a partial order $\preceq_{\text{top}}$ on the set of strata such that $x\preceq_{\text{top}}y$ if and only if $S_x\subset S_y^c$. We call it the \textit{topological (partial) order}.
\end{nt}
\begin{nt}
     We associate a partial order ``$\leq$" on $\mathbb Z^n$ such that $(d_i)_i\leq (d'_i)_i$ if $d_i\leq d'_i$ for all $i$. 
\end{nt}

\begin{nt}\label{nt:set of numbers}
Let $n$ be any positive integer.  Denote by $[n]$ the set $\{0,1,...,n\}$.
\end{nt}

\begin{nt}\label{nt:standard_lattice}
Throughout the paper, $r<d$ will be two positive integers. Let $e_1\hh e_d$ be the standard basis of $K^d$. Denote $\omega$ to be the standard complete lattice chain, i.e.
\[\omega_i=\lag \pi^{-1}e_1\hh \pi^{-1}e_{i},e_{i+1}\hh e_d\rag,i=0\hh d-1.\]
For a subset $I$ of $[d-1]$, let $\omega_I$ be the lattice chain consisting of $\omega_i$ where $i\in I$. Denote $[\omega_i]$ for the homothety class of $\omega_i$. 
The simplex in the Bruhat-Tits building consisting of all $[\omega_i]$ will be denoted by $\Omega$. 
\end{nt}
\begin{nt}\label{nt:groups}
We fix the following group-theoretic notations:
\begin{enumerate}
\item $\widetilde{W}=\SS_d\ltimes\ZZ^d$ is the extended affine Weyl group of $\GL_d$. A general element is denoted as a pair $(\sg,\underline v)$, where $\sg\in\SS_d$ and $\underline v\in\ZZ^d$. We reserve the symbol $\cd $ for the action of $\wt{W}$ on $\ZZ^d$,
namely, $(\sigma,\underline v)\cdot (a_1,\dotsc,a_d)=(a_{\sigma^{-1}(1)},\dotsc,a_{\sigma^{-1}(d)})+\underline v$.
 The multiplication of two elements $g,h\in \wt W$ (or $g,h\in\GL_d$) will simply be denoted $gh$. The group law on $\wt{W}$ is given by
\[(\sg_1,\underline{v}_1)(\sg_2,\underline{v}_2)=(\sg_2\sg_1,\underline{v}_1+\sg_1\cd \underline{v}_2).\]
\item $W_a=\SS_d\ltimes\{(a_i)\in\ZZ^d\mid \sum a_i=0\}$ is the affine Weyl group of $\SL_d$. 
\item We denote $s_0\hh s_{d-1}$ to be the Coxeter generators of $W_a$: $s_0=((1,d),(1,0\hh 0,-1))$ and $s_i=((i,i+1),(0,\dotsc,0))$ for $i=1\hh d-1$. We use $\mathfrak l(\cdot)$ to denote the word length of elements in $W_a$ with respect to these Coxeter generators. As we shall see, up to the choice of a reference alcove, $\mathfrak l(\cd)$ extends to $\wW$.   
\item\label{itm:Iwahori} Let $B$ be the subgroup of $\SL_d(\kappa[\![t]\!])$ consisting of elements $(g_{i,j})$ such that $g_{i,j}\in t\cdot\kappa[\![t]\!] $, for all $i>j$. Let $N$ be the subgroup of $\SL_d(K)$ consisting of monomial matrices.
\end{enumerate} 
\end{nt}
\begin{conv}\label{nt:standard alcove}
Following the convention in existing literature, an \textit{alcove} in a building is a simplex of maximal dimension, and a \textit{face} is a sub-simplex of a given simplex.  
\end{conv}
\begin{nt}\label{nt:tau2}
Denote $\iota:=((12\hdots d),(1,0^{d-1}))\in \wW$. 
\end{nt}
\begin{nt}
Let $\underline v=(v(1)\hh v(d))\in \ZZ^d$. Denote $\sum(\underline v)=\sum_{k=1}^dv(k)$.
\end{nt}


\begin{nt}\label{nt:stab2}
When $K=\kappa(\!(t)\!)$, given $\underline a=(a_1,a_2\hh a_d)\in\ZZ^d$, denote $G_{\underline a}$ to be the following subgroup of $\SL_d(K)$:
\[\{(g_{i,j})\in\SL_d(K)\mid g_{i,j}\in t^{a_j-a_i}\cd R\text{ for all }i,j\}\]
One can check that $G_{\underline a}$ is the stabilizer subgroup of the lattice $\lag t^{-a_1}e_1, t^{-a_2}e_2\hh t^{-a_d}e_d\rag$. More generally, let $A$ be a finite subset of $\ZZ^d$, denote $G_{A}$ to be $\bigcap_{\underline a\in A} G_{\underline a}$, which is the common stabilizer subgroup of all relevant lattices. In particular, given a non-empty subset $I=\{i_1\hh i_m\}$ of $[d-1]$, then
\[G_I:=\{(g_{i,j})\in\SL_d(K)\mid g_{i,j}\in t^{\max_s\{a^{i_s}_j-a^{i_s}_i\}}\cd R\}
\mathrm{\ where\ } \underline a^{i_s}=(a^{i_s}_k)_k=(1^{i_s},0^{d-i_s}),\] is the stabilizer subgroup of $\omega_I=\{\omega_i\}_{i\in I}$ (equivalently, $\{[\omega_i]\}_{i\in I}$). It is straightforward to check that $B=G_{[d-1]}$ (Notation~\ref{nt:groups} (4)). 
\end{nt}

\section{Backgrounds}
In this section, we review the necessary basics of Bruhat-Tits theory and affine flag varieties, as well as the quiver Grassmannian associated to a convex lattice configuration in the Bruhat-Tits building.
In \S~2.2 and \S~2.3, we set $K=\kappa(\!(t)\!)$, and $R=\kappa[\![t]\!]$.

\subsection{Bruhat-Tits building for $\SL_d$}
We first review the lattice-chain description of the Bruhat-Tits building. 
For a general reference on the theory of buildings, see \cite{abramenko2008buildings}.

\begin{defn}
A \textit{lattice} in $K^d$ is a free $R$-module $L$ such that $L\ot_{R}K\to K^d$ is an isomorphism. Two lattices $L_1$ and $L_2$ are \textit{homothetic} if $L_1=aL_2$ for some $a\in K$. Denote by $[L_1]$ the equivalence class of $L_1$ under the homothety equivalence relation. 
Two lattice classes $[L],[L']$ are said to be \textit{adjacent} if there exist $L_1,L_2$ representing $[L],[L']$ respectively, and $\pi L_1\subset L_2\subset L_1$. 
\end{defn}
\begin{defn}\cite[\S 6.9.3]{abramenko2008buildings}
The \textit{affine building} $\fb_d$ associated to $\SL_d$ is the flag complex of homothety classes of lattices in $K^d$, under the incidence relation of adjacency. It is a simplicial complex of dimension $d-1$. 
\end{defn} 
\begin{rem}
The general theory of Bruhat and Tits associates to a BN-pair (\cite[Definition 6.55]{abramenko2008buildings})), $(B,N)$, of a reductive group $G$ a \textit{thick building} (\cite[Definition 4.1, Remark 4.2]{abramenko2008buildings}) 
which admits a strongly-transitive $G$-action (\cite[Theorem 6.56]{abramenko2008buildings}). For us, $G$ is $\SL_d(K)$, $B$ is the standard Iwahori subgroup (Notation \ref{nt:groups}(\ref{itm:Iwahori})) of $\SL_d(K)$ and $N$ is the subgroup of $\SL_d(K)$ consisting of monomial matrices.
  \end{rem}
The building has a far more-refined structure than just being a flag complex: it is a union of special sub-complexes called \textit{apartments}.
Throughout the paper, we fix a  \textit{standard apartment} $\cA$---the sub-complex of $\fb_d$ spanned by all vertices of the form $[\lag \pi^{a_1}e_1\hh \pi^{a_d}e_d  \rag]$ where $\{e_i\}_i$ is a fixed basis of $K^d$. We also call the simplex spanned by $[\omega_i]$ ($i=0\hh d-1$) (Notation \ref{nt:standard_lattice}) the \textit{standard alcove} and denote by $\Omega$. 
Note that vertices in $\cA$ are naturally identified with elements in $\ZZ^d/(1\hh 1)$. Following existing literature, we adopt the following combinatorial convention:
\begin{conv}\label{conv:alcove}
We identify $(a_1\hh a_d)$ with the lattice $\lag \pi^{-a_1}e_1\hh \pi^{-a_i}e_{i}\hh \pi^{-a_d}e_d \rag$, and $[a_1\hh a_d]$ with the corresponding homothety class, if no confusions occur. An alcove is often represented by an element $(v_i)_{i=0}^{d-1}\in (\ZZ^d)^d$, where $v_i$ is a representative of the $i$-th vertex. Similarly, a face of an alcove is represented by an element $(v_i)_{i\in I}\in (\ZZ^d)^{|I|}$, for some appropriate index set $I$. 
\end{conv}
We shall adopt some terminologies from the theory of buildings in describing general linked Grassmannians. For us, a \textit{lattice configuration} $\Gamma$ is defined to be a finite collection of vertices in $\cB_d$. The simplicial complex spanned by $\Gamma$ will be denoted $\Theta(\Gamma)$. A configuration $\Gamma= \{[L_i]\}_{i\in \cI}$ is said to be \textit{convex} if for any representatives $L_1$, $L_2$ of two members of $\Gamma$, we have $[L_1 \cap L_2]\in \Gamma$.  Moreover, we make the following definition:
\begin{defn}\label{defn:lattice quiver}
    Given a configuration $\Gamma$ in $\cA$, let $\tau_1\hh\tau_m$ be the maximal faces of $\Theta(\Gamma)$. A \textit{lattice quiver} representing $\Gamma$ is a choice of representatives for the homothety classes in $\Gamma$ such that for every $j$, if $\tau_j$ is of dimension $n_j$, then the chosen representatives for its vertices may be ordered in the following form: $L_{j,0}\subset L_{j,1}\subset\hdots \subset L_{j,n_j}\subset \pi^{-1}L_{j,0}$.     
\end{defn}

The lattice quiver always exists. For example, one can take the representative of each lattice class corresponding, as in Convention~\ref{conv:alcove}, to a (unique) integer vector with non-negative coordinates and at least one 0-coordinate.


\subsection{Affine flag varieties}\label{sec:aff_flag} Since the Bruhat-stratification we are about to introduce in the next section is defined over the residue field $\kappa$, in the sequel (up until Section~\ref{subsec:linked grass one simplex}), we will only work over $K=\kappa(\!(t)\!)$ and its ring of integers $R=\kappa[\![t]\!]$ for simplicity and rigorousness.

As the building $\fb_d$ comes equipped with a natural $\SL_d$-action, some important subgroups of $\SL_d$ are realized as stabilizer subgroups of simplices in it:
\begin{defn}\label{defn:parabolic}\cite[5.2.6]{bruhat1984groupes}
Let $F$ be a simplex in $\cA$. The associated \textit{parahoric subgroup} $P_F\subset\SL_d(K)$ is defined to be the pointwise stabilizer of $F$. 
\end{defn}
\begin{defn}\label{defn:IW}
The \textit{Iwahori-Weyl group} $W_F$ corresponding to a simplex $F$ is defined by $W_F=(N\cap P_F)/T(R)$, where $T\subset \SL_d(K)$ is the subgroup of diagonal matrices. It is a subgroup of $W_a=N/T(R)$.
\end{defn}

Since vertices in $\cA$ are naturally identified with elements in $\ZZ^d/(1\hh 1)$, the affine Weyl group $W_a$ acts naturally on $\ZZ^d/(1\hh 1)$. Since $P_F$ stabilizes vertices of $F$, the Iwahori-Weyl group $W_F$ is the subgroup of elements in $W_a$ stabilizing the corresponding elements in $\ZZ^d/(1\hh1)$. One can think of them as combinatorial incarnations of the parahoric subgroups.  

\begin{ex}\label{ex:calc1}
As an example, $B$ stabilizes $[\omega_i]$ for $i=0\hh d-1$. Hence for the standard alcove we have $W_\Omega=(N\cap B)/T(R)=\{1\}$.

Let $F$ be the face of $\Omega$ spanned by $[\omega_0]\hh[\omega_{d-2}]$. Since $P_F=G_{[d-2]}$ (Notation \ref{nt:set of numbers} and \ref{nt:stab2}), we have 
\[W_F=\Big(T(R)\cup N_{d-1,d}(R)\Big)/T(R)\cong\{s_{d-1},\id\},\]
where $N_{d-1,d}(R)$ is the set of monomial matrices in $\SL_d(R)$ such that the non-zero entries in the $i$-th column lie on the diagonal, for $i=1\hh d-2$, and $s_{d-1}$ is as in Notation~\ref{nt:groups}.
\end{ex}
Finally, we will need the definition of the set $\prescript{}{F'}{\wW}^{F}$, where $F,F'$ are two faces of the standard alcove, which is a combinatorial object that will become useful in describing the topological properties of affine Schubert varieties and linked Grassmannians. 
\begin{defn2}\cite[Lemma 1.6]{richarz2013schubert}\label{defn:min_ele}
Let $w\in \ti{W}$.
\begin{enumerate}
    \item  There exists a unique element $w^F$ of minimal length in the left coset $wW_F$.
    \item There exists a unique element $\prescript{}{F'}{w}^{F}$ of maximal length in $\{(vw)^F\mid v\in W_{F'}\}$.
\end{enumerate}
Define $\prescript{}{F'}{\wW}^{F}:=\{\prescript{}{F'}{w}^{F}\mid w\in \ti{W}\}$.
\end{defn2}
It is well-known that $W_a$ is an infinite Coxeter group with respect to the Coxeter generators $s_0\hh s_{d-1}$ (Notation \ref{nt:groups}) and comes equipped with the Bruhat order (\cite[\S 3.4]{abramenko2008buildings}). The Bruhat order, 
$\preceq$, as well as the corresponding length function, $\mathfrak l(\cdot)$, extend to $\wW$ as follows: Fix a reference alcove and let $W$ be its stabilizer in $\ti{W}$, which is a cyclic group of order $d$ modulo $\ZZ\cd(1,1\hh 1)$ (see next example). Then any element in $\ti{W}$ can be uniquely written in the form $wt$, where $w\in W_a$ and $t\in W$; given $w_1,w_2\in W_a$ and $t_1,t_2\in W$, we have $w_1t_1\preceq w_2t_2$ if and only if $w_1\preceq w_2$ and $t_1=t_2$ (\cite[\S 1.8]{kottwitz2000minuscule}). We also define $\mathfrak l(wt)=\mathfrak l(w)$ for $w\in W_a$ and $t\in W$. 
\begin{ex}\label{ex:iota}
Take the reference alcove above as the standard alcove. For $(\sg,\underline{v})\in\widetilde W$ to stabilize (not necessarily pointwise) $[0^d],[1,0^{d-1}]\hh [1^{d-1},0]$, up to an integral multiple of $(1,1\hh 1)$, we may assume $\underline{v}=(1^i,0^{d-i})$ for some $i$ between $0$ and $d-1$. It is not hard to check that $\sg(1)=i+1$, $\sg(2)=i+2\hh \sg(d)=i$ must hold. In other words, the stabilizer group $W$ (modulo $\ZZ\cd(1,1\hh 1)$) is the cyclic group generated by $\iota:=((12\hdots d),(1,0^{d-1}))$.

As an example of length computation, take $w=(\Id,(1^r,0^{d-r}))$. We have $w=(w\iota^{-r})\iota^{r}$, where $w\iota^{-r}\in W_a$. Applying the well-known identification of $W_a$ with a subgroup of permutations on $\ZZ$ (\cite[\S 4.1]{Shi}), and then applying Shi's formula for the length function (Lemma 4.2.2 in \textit{loc.cit.}), one can conclude that $\mathfrak l(w):=\mathfrak l(w\iota^{-r})=r(d-r)$.   
\end{ex}
\begin{rem}\label{rem:order_double_coset}
Since there is a natural bijection between $\prescript{}{F'}{\wW}^{F}$ and the double coset $W_{F'}\backslash \ti{W}/W_F$, the Bruhat order on $\ti{W}$ induces a partial order on $W_{F'}\backslash \ti{W}/W_F$. 
\end{rem}
\begin{ex}
There is a natural set map $\ti{W}\to \prescript{}{F'}{\wW}^{F}$ sending $w$ to $\prescript{}{F'}{w}^{F}$. For example, when $F$ and $F'$ are as in Example \ref{ex:calc1}, $\prescript{}{F'}{w}^{F}$ is simply the longer word among $w$ and $s_{d-1}w$. 
\end{ex}
Compared to our approach in \cite{he2023degenerations}, a new ingredient in the current paper is to use the theory of affine Schubert varieties to study the geometry of linked Grassmannians. This is a feasible approach, because, when the lattice configuration is given by the vertex set of a simplex in $\cA$, the special fiber of the linked Grassmannian can be embedded inside the relevant (partial) affine flag variety, which we will introduce next. 
Based on Bruhat-Tits theory (\cite[3.4.1]{tits1979reductive}), given a parahoric subgroup $P_F$, there is smooth affine group scheme $\cP_F$ over $R=\kappa[\![t]\!]$, whose generic fiber is $\SL_d$ and whose set of $R$-points is $P_F$. It is called the \textit{Parahoric group scheme} associated to $F$. 

\begin{defn}
Let $F$ be a simplex in $\fb_d$ and $\cP_F$ its associated parahoric group scheme. The associated \textit{affine flag variety} $\cF_F$ is defined to be the fpqc sheaf associated to the presheaf $(\kappa-\text{algebra})\to(\text{Sets}):S\mapsto \SL_d(S(\!(t)\!)))/\cP_F(S[\![t]\!])$. 
\end{defn}

The definition is a special case of the one in \cite[\S 1.c]{pappas2008twisted}. It is well-known that $\cF_F$ is represented by an ind-$\kappa$-scheme of ind-finite type over $\kappa$ (Theorem 1.4 in \textit{loc.cit.}). Moreover, the affine flag variety for the special linear group enjoys a more concrete description in terms of lattice chains. For the sake of self-containment, we also provide a proof. (In the case that $F$ is a face of the standard alcove in $\cA$, see \cite[\S 4]{PR2}.) 


\begin{prop}\label{prop:chain}
    Let $I=\{i_1\hh i_m\}$ be a subset of $[d-1]$ and $g\in\wW$. Let $F$ be the face in $\cA$ with vertex set given by $\{g\cd [\omega_i]\mid i\in I\}$.  The affine flag variety $\cF_F=\SL_d/\cP_F$ is equivariantly isomorphic to the ind-$\kappa$ scheme $\cF_{I,g}$ such that for any $\kappa$-algebra $S$, the set 
    $\cF_{I,g}(S)$ parametrizes lattice chains $\Lambda_{i_1}\subset\hdots\subset \Lambda_{i_m}\subset t^{-1}\Lambda_{i_1}$ in $S(\!(t)\!)^d$ satisfying
\begin{enumerate}
    \item $\Lambda_{i_{k+1}}/\Lambda_{i_k}$ (resp. $t^{-1}\Lambda_{i_1}/\Lambda_{i_m}$) is a locally-free $S$-module of rank $i_{k+1}-i_k$, where $k=1\hh m-1$ (resp. $d+i_1-i_m$); 
    \item $\det \Lambda_{i_1}=\det (g\cd \omega_{i_1,S})$.
\end{enumerate}   
\end{prop}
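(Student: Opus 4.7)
The plan is to construct a natural $\SL_d$-equivariant morphism of $\kappa$-ind-sheaves $\varphi\colon \cF_F \to \cF_{I,g}$ and verify it is an isomorphism. Given an $S$-point of $\cF_F = \SL_d/\cP_F$ represented fpqc-locally by $h \in \SL_d(S(\!(t)\!))$, I set
\[
\varphi([h]) := \bigl( h \cdot g \cdot \omega_{i_k,S} \bigr)_{k=1}^m.
\]
To see this is well-defined, note that by Notation~\ref{nt:stab2} together with the identification $\cP_F(S[\![t]\!]) = g\, G_I(S[\![t]\!])\, g^{-1}$, the subgroup $\cP_F$ is precisely the simultaneous stabilizer of the ordered chain $(g\omega_{i_k,S})_k$, so the assignment descends to the quotient. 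The standard chain $(g\omega_{i_k,S})_k$ satisfies condition~(1) by direct inspection, and multiplication by $hg$ preserves locally-free rank of successive quotients; condition~(2) holds because $\det h = 1$. Injectivity of $\varphi$ is immediate: if $\varphi([h_1]) = \varphi([h_2])$, then $h_2^{-1}h_1$ fixes each $g\omega_{i_k,S}$ and hence lies in $\cP_F(S[\![t]\!])$.

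The heart of the proof is local surjectivity in the fpqc topology on $S$. Given a chain $(\Lambda_{i_k})_k$ satisfying (1) and (2), I must exhibit an fpqc cover $S' \to S$ and an element $h \in \SL_d(S'(\!(t)\!))$ with $\Lambda_{i_k}\otimes_S S' = h\cdot g\cdot \omega_{i_k,S'}$ for all $k$. The strategy is to show, fpqc-locally on $S$, that each $\Lambda_{i_k}$ is a free $S'[\![t]\!]$-module of rank $d$ and that the full chain is isomorphic, as an $S'[\![t]\!]$-module chain, to the standard chain $(g\omega_{i_k,S'})_k$. This furnishes some $h' \in \GL_d(S'(\!(t)\!))$ with $h'\cdot (g\omega_{i_k,S'}) = \Lambda_{i_k,S'}$; the determinant condition~(2) then forces $\det h' \in R^\times$, and a diagonal correction lying inside $\cP_F$ adjusts $h'$ to land in $\SL_d$ without altering its image. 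The $\SL_d$-equivariance is manifest from the construction, and since both sides are fpqc sheaves, agreement on $S$-points yields an isomorphism of ind-schemes.

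The main obstacle is the local trivialization step: producing bases of the $\Lambda_{i_k}$ over $S'[\![t]\!]$ which simultaneously exhibit the prescribed successive-quotient structure. For $I = [d-1]$ and $g = \mathrm{id}$ (the full alcove case), this is the classical lattice-chain description of the affine flag variety for $\SL_d$ available in \cite[\S 4]{PR2}. For a general face $F$, I would reduce to the alcove case by first completing $(\Lambda_{i_k})_k$ fpqc-locally to a full chain indexed by $[d-1]$ (possible because the prescribed successive quotients are locally free of the right ranks), then applying the alcove statement, and finally forgetting the auxiliary lattices; the general $g$ is handled by left-translating the standard reference chain by a lift of $g$ to $\GL_d(K)$.
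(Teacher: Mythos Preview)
Your overall architecture---define $\varphi([h]) = (hg\omega_{i_k,S})_k$, check it descends, then argue local surjectivity by trivializing the chain---matches the paper's. The surjectivity step differs in execution: the paper invokes the Rapoport--Zink normal-form argument \cite[Proposition~A.4, Ch.~3]{rapoport2016period} directly for an arbitrary partial chain, whereas you reduce to the full-alcove case of \cite[\S4]{PR2} by completing the chain. Both are fine; your route is slightly more circuitous but avoids re-running the Rapoport--Zink argument.

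There is, however, a genuine gap in your well-definedness step. You assert the identification $\cP_F(S[\![t]\!]) = g\,G_I(S[\![t]\!])\,g^{-1}$ for every $\kappa$-algebra $S$, appealing to Notation~\ref{nt:stab2}. But Notation~\ref{nt:stab2} only describes the stabilizer at the level of $K$-points (equivalently, $R$-points): it tells you that $P_F = \cP_F(R)$ is the stabilizer of the chain in $\SL_d(K)$. The parahoric group scheme $\cP_F$ is \emph{defined} abstractly via Bruhat--Tits theory as the smooth affine $R$-group with generic fibre $\SL_{d,K}$ and $R$-points $P_F$; it is not a priori clear that its $S[\![t]\!]$-points for general $S$ coincide with the stabilizer of the base-changed chain. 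The paper addresses exactly this point: it introduces the automorphism group scheme $\aut_{g\cdot\omega_I}$ (whose $S$-points are by construction the simultaneous stabilizers over $S$) and proves $\cP_F = \aut_{g\cdot\omega_I}$ as group schemes over $R$ by showing both are \emph{\'etoff\'e} in the sense of \cite[1.7]{bruhat1984groupes} with the same generic fibre. Without this identification, you have not shown that $\varphi$ descends to the fpqc quotient $\SL_d/\cP_F$ for general $S$, only for $S=\kappa$. You should either supply this argument or cite a source (e.g.\ \cite{haines4introduction} for the $\GL_d$ analogue) that identifies the Bruhat--Tits parahoric scheme with the lattice-chain automorphism scheme.
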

\begin{proof}
    Note that there exist natural maps $\SL_d(S(\!(t)\!))\to \cF_{I,g}(S):h\mapsto (hg)\cd \omega_{I,S}$. 
    
    Define $\aut_{g\cdot \omega_I}$ to be the group scheme representing the following functor: 
    \[(R-\text{algebras})\to(\text{Sets}):S\mapsto \{(g_i)_{i\in I}\mid g_i\in\SL_d(S)\mathrm{\ for\ all\ }i\in I\mathrm{\ and\ }\ (\star)\text{ is commutative}\}\]
    where $(\star)$ is the following diagram:
    \[\begin{tikzcd}
        g\cd\omega_{i_1,S}\ar[r]\ar[d,"g_{i_1}"]&g\cd\omega_{i_2,S}\ar[r]\ar[d,"g_{i_2}"]&\hdots\ar[r]&g\cd\omega_{i_m,S}\ar[d,"g_{i_m}"]\\
        g\cd\omega_{i_1,S}\ar[r]&g\cd\omega_{i_2,S}\ar[r]&\hdots\ar[r]&g\cd\omega_{i_m,S}
    \end{tikzcd}
    \]
and $g\cd\omega_{i,S}$ stands for the $S$-lattice $\lag t^{-(g\cd\omega_i)(1)}e_1,t^{-(g\cd\omega_i)(2)}e_2\hh t^{-(g\cd\omega_i)(d)}e_d \rag$. 
    
    By definition, $P_F$ is the pointwise stabilizer subgroup of vertices of $F$ in $\SL_d(\kappa(\!(t)\!))$, or equivalently, $P_F$ is the stabilizer group of the $R$-lattice chain $g\cd \omega_I$. In fact, this can be strengthened into the statement that $\cP_F$ agrees with $\aut_{g\cd\omega_I}$. (For a reference to the analogous statement in the case of $\GL_d$, see \cite{haines4introduction}.) This follows from the fact that they are \'etoff\'e schemes (\cite[1.7]{bruhat1984groupes}) with identical generic fibers $\SL_{d,K}$:
    \begin{enumerate}
        \item The claim on the generic fibers follows from the definition of $\cP_F$, together with the observation that lattices become $d$-dimensional $K$-vector spaces over the generic point of $\spec(R)$.    
        \item $\cP_F$ is known to be  \'etoff\'e (\cite[4.6.6]{bruhat1984groupes}). 
        \item To see that $\aut_{g\cd\omega_I}$ is \'etoff\'e, one can check that the restriction map $\aut_{g\cd\omega_I}(R)\to\aut_{g\cd\omega_I}(\kappa)$ is surjective by a modification of the argument in page 135 of \cite{rapoport2016period}. 
        Meanwhile, the special fiber $(\aut_{g\cd\omega_I})_0$ is a closed sub-scheme of a finite product of special linear groups over $\kappa=\overline{\kappa}$, so that its closed points are dense. Hence the image of $\aut_{g\cd\omega_I}(R)$ in $(\aut_{g\cd\omega_I})_0$ is dense. By \cite[1.7.2]{bruhat1984groupes}, $\aut_{g\cd\omega_I}$ is \'etoff\'e.
    \end{enumerate}    
    Thus, we get functorial injective morphisms
    \[\SL_d(S(\!(t)\!))/\cP_F(S)\to \cF_{I,g}(S)\]
    By definition, the $\SL_d$-action is always preserved. 

    Meanwhile, the same argument as in \cite[Porposition A.4, Ch3]{rapoport2016period} implies that given any lattice chain $(\La_i)_{i\in I}$,  Zariski locally on $\spec(S)$ there exists some $h\in \GL_d(S[\![t]\!])$ such that $\La_i=(hg)\cd\omega_i$ for all $i$. By further multiplying by an appropriate scalar matrix if necessary, we may assume $\det(h)=1$. Thus, the aforementioned functorial morphisms give an equivariant isomorphism $\SL_d/\cP_F\to \cF_{I,g}$ of ind-schemes over $\kappa$. 
\end{proof}
This description of the affine flag variety allows us to embed the special fibers of linked Grassmannians associated to simplices in $\fb_d$ into the corresponding affine flag varieties. Since these special linked Grassmannians serve as building blocks for more general ones, it eventually allows us to apply techniques from affine Schubert varieties to the study of the latter. To this end, we review some general definitions. 

Let $P,P'$ be two parahoric subgroups of $\SL_d(K)$ associated to two simplices $F,F'$ in $\cA$ respectively, and $\cP,\cP'$ be the corresponding parahoric group schemes.
Denote $L^+\cP'$ to be the affine group scheme such that $L^+\cP'(S)=\cP'(S[\![t]\!])$ for any $\kappa$-algebra $S$. By  definition, $L^+\cP'$ acts naturally from the left on $\cF_F$. 

\begin{defn}(\cite[Definition 2.5]{richarz2013schubert})\label{defn:Schubert}
For $w\in W_a=N/T(R)$, the $(P',P)$-\textit{Schubert cell} $C_w(P',P)$ is the reduced subscheme $L^+\cP'\cd n_w$ of $\cF_{F}$, where $n_w$ is any lift of $w$ to $N\subset \SL_d(K)$. The corresponding \textit{Schubert variety} 
is the closure of $C_w(P',P)$ equipped with the reduced scheme structure. \end{defn}

Now, fix a subset $I=\{i_1\hh i_m\}$ of $[d-1]$. Let $g\in\wW$ and let $F$ be the simplex spanned by $\{[g\cd \omega_i]\}_{i\in I}$. Given $w\in W_a$, set $(x_i)_{i\in I}=(wg\cd \omega_i)_{i\in I}\in (\ZZ^d)^m$ as in Convention \ref{conv:alcove}. As a lattice chain, it corresponds to a point in $\cF_F$. By Proposition \ref{prop:chain}, given any other face $F'$ in $\cA$, the $\kappa$-points of $C_w(P_{F'},P_F)$ can be described concretely as the $P_{F'}$-orbit of the point $(x_i)_{i\in I}$, namely, the lattice chain $\Lambda_{i_1}\subset\hdots\subset \Lambda_{i_m}\subset t^{-1}\Lambda_{i_1}$ such that
\begin{equation}\label{eqn:orbit}
   \Lambda_{i_j}=\lag t^{-x_{i_j}(1)} e_1,t^{-x_{i_j}(2)} e_2\hh t^{-x_{i_j}(d)} e_d\rag.
\end{equation}


\subsection{Linked Grassmannians: the one-simplex case}\label{subsec:linked grass one simplex}
The content in this subsection serves as building blocks for our study of linked Grassmannians associated to more complicated lattice configurations involving multiple simplicies inside one apartment $\cA$ of $\fb_d$. Much of the main idea is taken from \cite{gortz2001flatness} and  \cite{kottwitz2000minuscule}, but we adapt the statements thereof to our context and add detail where necessary.  

We first recall the definition of the linked Grassmannian functor as given in \cite{he2023degenerations}. 

\begin{defn}\label{defn:linked_grass}
Let $\G$ be a finite convex configuration of vertices in $\fb_d$ indexed by some set $\cI$ and fix a set of representatives $\{L_i\}_{i\in\cI}$. Define the \textit{linked Grassmannian functor}, $\mathcal{LG}_{r}(\Gamma)$, to be the functor on $R$-schemes $T$ whose $T$-valued points are collections of rank-$r$ locally-free sheaves $\{E_i\}_{i\in\cI}$, where $E_i$ is a sub-bundle of $L_i\otimes \mathcal O_T$, such that for every possible inclusion $\pi^k\colon L_a\hookrightarrow L_b$ ($k\in\ZZ$), the induced morphism $L_a\otimes \mathcal O_T\rightarrow L_b\otimes \mathcal O_T$ maps $E_a$ into $E_b$. 
\end{defn}
The functor $\mathcal{LG}_{r}(\Gamma)$ is represented by a scheme $LG_{r}(\Gamma)$ projective over $R$ which is independent of the choice of the representatives $L_i$ (\cite[Proposition 2.3]{he2023degenerations}). Plainly, the generic fiber of $LG_{r}(\Gamma)$ is the usual Grassmannian over $K$.

\begin{nt}
    The special fiber of $LG_r(\Gamma)$ will be denoted by $LG_r(\Gamma)_0$.
\end{nt}

The first case to consider is the lattice configuration $\Omega=\{[\omega_0]\hh[\omega_{d-1}]\}$ corresponding to the standard alcove. Based on their definitions, when making the same assumptions on the fraction and residue fields, the linked Grassmannian $LG_r(\Omega)$ agrees with the standard local model of Shimura varieties $\mathbf M^{\text{loc}}$ studied by G\"ortz in \cite{gortz2001flatness}. We may thus recall some fundamental results from \textit{loc.cit.} directly.

In \cite[\S 4]{gortz2001flatness}, G\"ortz defined an open covering for $LG_r(\Omega)$, which is compatible with the stratification of its special fiber into a disjoint union of some affine Schubert cells. The elements in this covering are indexed by $\mu$\textit{-admissible alcoves}, which we shall identify as arrays of integer vectors: 

\begin{defn2}\cite[Definition 4.2]{gortz2001flatness}\label{defn:adm1}
Let $\mu=(1^r,0^{d-r})\in\ZZ^d$ and $g\in \widetilde W$. We say that $\mathbf{x}=g\cd \omega\in(\ZZ^d)^d$ is $\mu$-\textit{admissible}, if one of the following two equivalent conditions hold:
\begin{enumerate}
    \item $g\preceq \sigma\cd \mu$ for some $\sigma\in\SS_d$, where $\sigma\cd \mu$ is thought of as an element in the subgroup of translations of $\wW$, and $\preceq$ is the Bruhat order on $\widetilde{W}$. (More concretely, this means $g\iota^{-r}\preceq (\id,\sigma\cd \mu) \iota^{-r}$ in $W_a$.)
    \item $\omega_i\le g\cd \omega_i\le \omega_i+(1,1\hh 1)$ and $\sum(g\cd \omega_i)-\sum(\omega_i)=r$, for $i=0\hh d-1$. 
\end{enumerate}
The set of all $\mu$-admissible $\mathbf x$'s is denoted by $\textbf{adm}_{r,d}$.
\end{defn2}
The equivalence of the two conditions is the content of Theorem 3.5 in \cite{kottwitz2000minuscule}.

The notion of $\mu$-admissible alcoves naturally extends to the notion of $\mu$-admissible faces. Similar to their prototypes, these combinatorial objects also serve the purpose of indexing strata of the special fiber, as well as elements of a natural open covering of linked Grassmannians $LG_r(\Gamma)$, where $\Gamma=\{[\omega_i]\mid i\in I\}$ is a non-empty proper subset of $\Omega$ corresponding to a face $F$ of the standard alcove. Here $I$ is a subset of $[d-1]$ as before. 
In this case, we denote $W_I$ for the Iwahori-Weyl group $W_{F}$.
\begin{defn2}\label{defn:adm2}
Let $\mu=(1^r,0^{d-r})\in\ZZ^d$. Given $g\in\wW$, then $\mathbf{x}=g\cd \omega_I=(g\cd \omega_i)_{i\in I}\in(\ZZ^d)^{|I|}$ is $\mu$\textit{-admissible} if one of the following two equivalent conditions hold:
\begin{enumerate}
    \item $\omega_i\le g\cd \omega_i\le \omega_i+(1,1\hh 1)$, and $\sum(g\cd \omega_i)-\sum(\omega_i)=r$, for $i\in I$;
    \item $W_Ig W_I\preceq W_I(\id,\sg\cd \mu)W_{I}$ for some $\sigma\in\SS_d$, where $\preceq $ is the partial order on $W_I\backslash\ti{W}/W_I$ induced by the Bruhat order on $\ti{W}$ (Remark \ref{rem:order_double_coset}).
\end{enumerate} 
\end{defn2}
The first part in (1) was originally called \textit{minuscule} in (\cite[9.2.1]{kottwitz2000minuscule}). The equivalence of the two conditions is the content of Theorem 9.6(3) in \cite{kottwitz2000minuscule}. Moreover, every admissible face $g\omega_I\in (\ZZ^d)^{|I|}$ is the restriction of some admissible alcove $g\omega$ (\cite[Proposition 9.3]{kottwitz2000minuscule}). Analogous to $\textbf{adm}_{r,d}$, we have
\begin{defn} \label{defn:adm2'}
The set
$\textbf{adm}^I_{r,d}=\{g W_{I}\mid g\in \textbf{adm}_{r,d}\}$ is the collection of $\mu$-admissible integer-vector arrays in $(\mathbb Z^d)^{|I|}$.  
\end{defn}
When defining the stratification of the special fiber and a natural open covering for $LG_r(\Gamma)$, we shall first stratify the special fiber into a union of parahoric Schubert cells, and then define the open covering correspondingly. This is slightly different from the situation presented in \cite{gortz2001flatness}, where Iwahori Schubert cells are considered. In doing so, each stratum of the special fiber can still be described as a rank locus in the context of quiver representations, which builds a natural connection between the two approaches. That being said, we need to introduce one more definition, which is motivated by the fact that $(P_{F'},P_F)$-Schubert cells are indexed by double cosets in $W_{F'}\backslash W_a/W_F$ (\cite[Proof of Lemma 1.6]{richarz2013schubert}). 


\begin{defn}\label{defn:equiv1} Define the following equivalence relation on $\textbf{adm}^I_{r,d}$:
    \[h_1W_I\sim h_2W_I \text{ if and only if } W_Ih_1W_I=W_Ih_2W_I\] 
Given any $\mathbf{x}\in \mathbf{adm}^I_{r,d}$, denote $C_{\mathbf{x}}$ to be the equivalence class of $\mathbf x$ with respect to this relation.   
\end{defn}
\begin{rem}\label{rem:equiv} Let $F$ be the face in $\cA$ represented by $\iota^{r}\cd\omega_I$, where $\iota:=((12\hdots d),(1,0^{d-1}))\in \wW$ (Notation \ref{nt:tau2}). 
    Note that $hW_I=(h\iota^{-r})\cd \iota^r\cd W_I$ and $\iota^r W_I\iota^{-r}=W_F$. Thus, if two admissible faces $h_1W_I$ and $h_2W_I$ are equivalent in the given sense, we have $W_I(h_1\iota^{-r})W_F=W_Ih_1W_I\iota^{-r}=W_Ih_2W_I\iota^{-r}=W_I(h_2\iota^{-r})W_F$. Therefore, $C_{h_1\iota^{-r}}(P_{\omega_I},P_F)=C_{h_2\iota^{-r}}(P_{\omega_I},P_F)$.
\end{rem}
We can now state/generalize the related results in the literature in accordance with our setup. To avoid notational confusion, we treat $\omega_i$ as an integer vector in the following theorem. 

\begin{thm}\label{thm:open_cover1} Let $L_i$ be the lattice identified with $\omega_i$ as in Convention~\ref{conv:alcove} and $\Gamma$ be the configuration $\{[L_i]\mid i\in I\}$, where $I=\{i_1\hh i_m\}$ is a subset of $[d-1]$. Given $\mathbf x,\mathbf y\in \mathbf{adm}^I_{r,d}$, we write $\mathbf{x}=(x_i)_{i\in I}=(g\cd\omega_i)_{i\in I}\in(\ZZ^d)^{|I|}$ and $\mathbf{y}=(h\cd\omega_i)_{i\in I}$, where $g,h\in \textbf{adm}_{r,d}$. Set $F$ to be the face in $\fb_d$ represented by $\iota^r\cd\omega_I$. The following hold for $LG_r(\Gamma)$:
\begin{enumerate}
    \item The special fiber $LG_r(\Gamma)_0$ embeds into the partial affine flag variety $\cF_F=\SL_d/\cP_F$. It can be stratified into a union of $(P_{\omega_I},P_{F})$-Schubert cells of the form $S_{\mathbf{x}}:=C_{(g\iota^{-r})}(P_{\omega_I},P_{F})$ for $\mathbf x\in \mathbf{adm}^I_{r,d}$.
    \item We have
    $S_{\mathbf{x}}\subseteq S_{\mathbf{y}}^c$ if and only if $g\iota^{-r}\preceq h\iota^{-r}$, where $\preceq $ is the Bruhat order on $W_I\backslash W_a/W_F$.
    
    \item 
    Define $U'_{\mathbf{x}}$ to be the open subscheme of $LG_r(\Gamma)$ whose $S$-points ($S$ being any $R$-algebra) parametrize objects $(E_i)_{i\in I}$ such that $E_i\subset L_{i,S}$ and for every $i\in I$, the quotient lattice $L_{i,S}/E_i$ is generated by the images of $\pi^{-\omega_i(j)}e_j\in L_{i,S}$ for which $x_i(j)=\omega_i(j)$. Define $U_{\mathbf{x}}=\bigcup_{\mathbf{x'}\in C_{\mathbf x}}U'_{\mathbf x'}$. 
    
    Then, the $U_{\mathbf{x}}$ together form an open cover of $LG_r(\Gamma)$. Moreover, $U_\mathbf{x}$ contains $S_{\mathbf{x}}$. 
    \item Each aforementioned $S_{\mathbf{x}}$ is the locus in $LG_r(\Gamma)_0$ whose $\kappa$-points $V=(V_i)_{i\in I}$ satisfy the following rank condition for each $i\neq j\in I$:  
    \[r_{i,j}:=\rk(V_i\to V_j)=\sum_k (x_i(k)-\omega_i(k)),\] 
    where the sum runs through $k=j+1,j+2\hh i+d$ (resp. $k=j+1,j+2\hh i$) if $i<j$ (resp. $i>j$). Here we set $x_i(j+d)=x_i(j)$ and  $\omega_i(j+d)=\omega_i(j)$. In particular, $r_{i,j}$ only depends on $C_{\mathbf x}$.

    Conversely, $S_{\mathbf x}$ is uniquely determined by the rank vector $(r_{i,j})_{i,j}$. 
    \item When $\Gamma=\Omega$ is the standard alcove, the irreducible components of the special fiber are indexed by the distinct permutations of the integer vector $(1^r,0^{d-r})$. The corresponding $U_{\mathbf{x}}$ is an affine space of dimension $r(d-r)$, i.e. $U_{\mathbf{x}}\cong \AA^{r(d-r)}$.
    \item The dimension of $S_\mathbf{x}$ is $\mathfrak l(\prescript{}{\omega_I}g^F)$, where $\prescript{}{\omega_I}g^F$ is defined in Definition \ref{defn:min_ele}.
\end{enumerate}
\end{thm}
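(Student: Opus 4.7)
The plan is to reduce everything to the one-simplex local model $LG_r(\Omega)$ studied by G\"ortz in \cite{gortz2001flatness} and to adapt those techniques to the partial-flag setting via the lattice-chain description of $\cF_F$ given in Proposition~\ref{prop:chain}. For part~(1), I would construct the embedding $LG_r(\Gamma)_0 \hookrightarrow \cF_F$ by sending a collection $(E_i)_{i\in I}$ to the collection $(\Lambda_i)_{i\in I}$ of preimages under $L_i \twoheadrightarrow L_i/\pi L_i$. The chain compatibility of the $E_i$ under the natural maps $L_i \to L_j$ is then equivalent to the inclusions $\Lambda_{i_k} \subset \Lambda_{i_{k+1}} \subset t^{-1}\Lambda_{i_1}$ required by Proposition~\ref{prop:chain}, while $\det \Lambda_i = \pi^{d-r}\det L_i$ accounts exactly for the shift by $\iota^r$ (Notation~\ref{nt:tau2}). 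The admissibility conditions in Definition/Lemma~\ref{defn:adm2} then follow automatically from $\pi L_i \subset \Lambda_i \subset L_i$ together with $\dim_\kappa(L_i/\Lambda_i) = d-r$, so the image is a union of $(P_{\omega_I}, P_F)$-orbits indexed by admissible arrays. Parts~(2) and~(6) are then classical facts about partial affine flag varieties (see \cite{richarz2013schubert}): closures of $(P', P)$-Schubert cells are governed by the Bruhat order on double cosets, and their dimensions equal the lengths of the minimal representatives $\prescript{}{\omega_I}{w}^{F}$, applied here to $w = g\iota^{-r}$.

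For part~(3), I would mimic G\"ortz's construction in \cite[\S4]{gortz2001flatness}. Each $U'_\mathbf{x}$ is cut out by the open condition that a specified set of image vectors generates the quotient $L_{i,S}/E_i$; unioning over the equivalence class $C_\mathbf{x}$ of Definition~\ref{defn:equiv1} ensures $W_I$-invariance, so that $U_\mathbf{x}$ contains the full Schubert cell $S_\mathbf{x}$ by Remark~\ref{rem:equiv}. That the $U_\mathbf{x}$ cover $LG_r(\Gamma)$ is precisely the admissibility theorem of Kottwitz--Rapoport: every special-fiber point corresponds to a $\mu$-admissible face by the equivalent conditions of Definition/Lemma~\ref{defn:adm2}, and on the generic fiber the standard open cover of the Grassmannian suffices. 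Part~(4) reduces to a direct rank computation: for a representative lattice chain $\mathbf{x}$ in standard-basis coordinates, the induced map $V_i \to V_j$ has an explicit matrix in terms of the basis vectors $e_k$, and the stated sum counts exactly those positions where $x_i(k) > \omega_i(k)$ and whose image under $L_i \to L_j$ survives in the quotient $L_j/\pi L_j$; conversely, admissibility forces $x_i(k) \in \{\omega_i(k), \omega_i(k)+1\}$, so the rank vector determines each $x_i$ up to the $W_I$-action, hence determines $S_\mathbf{x}$. Part~(5) is immediate from \cite[Propositions~4.6--4.7]{gortz2001flatness} applied to $\Gamma = \Omega$.

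The main obstacle I anticipate is keeping the two descriptions of $\mu$-admissibility consistent under the $\iota^r$-shift. Namely, the linked Grassmannian naturally indexes strata by cosets $hW_I$ (through lattices $\Lambda_i \subset L_i$), whereas the partial affine flag variety $\cF_F$ naturally indexes them by double cosets $W_I\backslash W_a / W_F$ with $F$ represented by $\iota^r\cdot \omega_I$; reconciling the two requires carefully tracking the identification $\iota^r W_I \iota^{-r} = W_F$ and verifying that the equivalence relation in Definition~\ref{defn:equiv1} matches the double-coset identification both on the level of cells and on the level of rank vectors. Once this bookkeeping is in place, each of the six statements reduces either to Proposition~\ref{prop:chain}, Kottwitz--Rapoport's Theorem~9.6, the general theory of Schubert cells in partial affine flag varieties, or G\"ortz's original analysis of $LG_r(\Omega)$.
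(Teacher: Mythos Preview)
Your proposal is correct and follows essentially the same route as the paper: the embedding of $LG_r(\Gamma)_0$ into $\cF_F$ via the lattice-chain description of Proposition~\ref{prop:chain}, the appeals to \cite{richarz2013schubert} for parts~(2) and~(6), to G\"ortz for parts~(3) and~(5), and the direct rank computation for part~(4) all match the paper's argument. The only place where the paper supplies substantially more detail than your sketch is the converse direction of~(4), where it explicitly constructs the element $s\in W_I$ taking $(x_i)_{i\in I}$ to $(y_i)_{i\in I}$ block by block over the index intervals $(i_k,i_{k+1}]$; your compressed reasoning that admissibility forces $x_i(k)\in\{\omega_i(k),\omega_i(k)+1\}$ and that the rank data then determines $\mathbf x$ up to $W_I$ is correct but would need to be unpacked along these lines.
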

\begin{proof} (1) The statement was mentioned in \cite[\S 4.2]{gortz2001flatness}, with detail provided in the case $I=[d-1]$. Let $S$ be any $\kappa$-algebra, and for $i\in I$ consider the quotient map $p_i:L_{i,S}\to L_{i,S}/tL_{i,S}$. An $S$-point $(E_i)_{i\in I}$ of the special fiber can be identified with the lattice chain $p_{i_1}^{-1}(E_{i_1})\subset p_{i_2}^{-1}(E_{i_2})\subset\hdots \subset p_{i_m}^{-1}(E_{i_m})$ with $\det t^{-1}p_{i_1}^{-1}(E_{i_1})=t^{-i_1-r}S[\![t]\!]=\det(\iota^r\cdot L_{i_1})$. 
After scaling each $p^{-1}_\bullet(E_\bullet)$ by $t^{-1}$, this identification induces an embedding of the special fiber into $\cF_F$ according to Proposition~\ref{prop:chain}. 

\medskip
To see that the special fiber can be stratified into disjoint parahoric Schubert cells, recall that each $(P_{\omega_I},P_F)$-cell is a $P_{\omega_I}$-orbit of some fixed lattice chain $\Lambda_{i_1}\subset \Lambda_{i_2}\subset\hdots\subset \Lambda_{i_m}$ (see (\ref{eqn:orbit})).
Up to scaling by $t$, the lattice chain corresponds to a point in the linked Grassmannian as above if and only if $L_{i_k}\supseteq t\La_{i_k}\supseteq tL_{i_k}$ for each $1\leq k\leq m$.
This is equivalant to 
\[b\cd L_{i_k}=L_{i_k}\supseteq b\cd (t\La_{i_k})= t(b\cd \La_{i_k})\supseteq b\cd(tL_{i_k})=tL_{i_k},\] 
for any $b$ in $P_{\omega_I}$. Thus, either $\Lambda_{i_1}\subset \Lambda_{i_2}\subset\hdots\subset \Lambda_{i_m}$ corresponds to a point in the linked Grassmannian and hence the $P_{\omega_I}$-orbit lies in the linked Grassmannian, or the whole $P_{\omega_I}$-orbit is disjoint from the linked Grassmannian.

\medskip
For (2), the statement is a special case of Proposition 2.8(i) in \cite{richarz2013schubert}.

\medskip
For (3), recall that each $(P_{\omega_I},P_F)$-cell, $C_w(P_{\omega_I},P_F)$, is set-theoretically a union of $(B,P_F)$-cells since the double coset $W_IwW_F=\bigcup_{u\in W_I}(uw)W_F$. Each $(B,P_F)$-cell $C_{uw}(B,P_F)$ is inside the linked Grassmannian if and only if $uw\cd \iota^r\cd \omega_I$ is admissible. Thought of as an array of integer vectors, $(x'_i)_{i\in I}:=(uw\iota^r)\cd\omega_I\in (\ZZ^d)^{|I|}$ gives a representative $\mathbf{x'}$ of $C_{\mathbf{x}}$, such that any object $(E_i)_{i\in I}$ in $C_{uw}(B,P_F)$ satisfies the property that $L_{i,S}/E_i$ can be generated by images of $t^{-\omega_i(j)}e_j\in L_{i,S}$, for which $x'_i(j)=\omega_i(j)$ (see also \cite[\S 4.3]{gortz2001flatness}). It is then not hard to see that the definition of $U_{\mathbf{x}}$ is taken precisely as such so that it contains $S_{\mathbf{x}}$.  As a result, the family $(U_{\mathbf x})_{\mathbf x}$ covers the special fiber $LG_r(\Gamma)_0$, hence covers $LG_r(\Gamma)$ by the properness of $LG_r(\Gamma)$.

\medskip
For (4), in the case $\Gamma=\Omega$, the result is the content of \cite[Proposition 2.6]{gortz2010supersingular}. In general, we use again the fact that a $(P_{\omega_I},P_F)$-cell, $S_{\mathbf x}$, may be described as the $P_{\omega_I}$-orbit of some lattice chain $\Lambda_{i_1}\subset \Lambda_{i_1}\subset\hdots \subset \Lambda_{i_m}$ (see (1) and~(\ref{eqn:orbit})). Such reference lattice chain is not unique in general. However, note that for any $i<j\in I$, and for any $b$ in $P_{\omega_I}$ we have $b\cd t\Lambda_i\cap tL_j=b\cd t\Lambda_i\cap b\cd (tL_j)=b(t\Lambda_i\cap tL_j)$, so the rank of the linear map $b\cd t\Lambda_i/t(b\cd t\Lambda_i)\to b\cd L_j/t(b\cd L_j)=L_j/tL_j$, in other words, $r_{i.j}$, is invariant over points in the cell. Thus it suffices to look at the case $b=\id$. Let us choose the lattice chain in (\ref{eqn:orbit}) to begin with:
\begin{enumerate}
    \item[(i)] When $j+1\le k \le d$ or $1\le k\le i$, since $\omega_i(k)=\omega_j(k)$, we get $t^{-x_i(k)+1}e_k\notin tL_j$ if $x_i(k)=\omega_i(k)+1$, and $t^{-x_i(k)+1}e_k\in tL_j$ otherwise. 
    \item[(ii)] When $i+1\le k<j$, since $\omega_j(k)=\omega_i(k)+1$, $t^{-\omega_i(k)}e_k\in tL_j$, and hence $t^{-x_i(k)}e_k\in tL_j$.  
\end{enumerate}
This proves the rank formula in the case $i<j$. Up to a circular permutation, it can be verified similarly in the case $j>i$.  

Conversely, let $(x_i)_{i\in I}$ and $(y_i)_{i\in I}$ be integer arrays in $\mathbf{adm}^I_{r,d}$. 
Suppose $(x_i)_{i\in I}$ and $(y_i)_{i\in I}$  generate the same rank vector. This is tantamount to the condition that for every $i\in I$ and $1\leq k\leq m$, we have  
\begin{equation}\label{eqn:rk}
\sum_{j=i_k+1}^{i_{k+1}}\big(x_{i}(j)-\omega_{i}(j)\big)=\sum_{j=i_k+1}^{i_{k+1}}\big(y_{i}(j)-\omega_{i}(j)\big),    
\end{equation}
where $i_{m+1}$ is interpreted as $i_1+d$.
Equivalently, $\sum_{j=i_k+1}^{i_{k+1}}x_{i}(j)=\sum_{j=i_k+1}^{i_{k+1}}y_{i}(j)$, for $1\leq k\leq m$. 
We need to check that there is an $s\in W_I$ such that $s\cd(x_i)_{i\in I}=(y_i)_{i\in I}$. To do this, we split each vector $x_i$ and $y_i$ into $m$ parts, each has indices from $i_k+1$ to $i_{k+1}$ for $1\leq k\leq m$, and construct $s$ for each part independently. 

We first consider $1\le k\le m-1$. Since $\omega_i=(1^i,0^{d-i})$ for each $i\in I$, by replacing $(\omega_i)_{i\in I}$ with a different lattice quiver representing the same configuration if necessary, we may assume 
\[(\omega_i(i_k+1)\hh \omega_i(i_{k+1}))=(0\hh 0),\qquad \fa i\in I.\] 
By admissibility, $x_i(j),y_i(j)=0$ or $1$ for all $i_k+1\leq j\leq i_{k+1}$. Moreover, each of the sequences $(x_{i_1}(j))_{j=i_k+1}^{i_{k+1}}\le\hdots\le (x_{i_m}(j))_{j=i_k+1}^{i_{k+1}}$ can be obtained from $(0,0\hh 0)$ by adding an integer vector in $\{0,1\}^{i_{k+1}-i_k}$. Recording the positions of the 1's in each $(x_{i_\bullet}(j))_{j=i_k+1}^{i_{k+1}}$ we get a (suitably ordered) sub-sequence of indices in $\{i_k+1,i_k+2\hh i_{k+1}\}$, and similarly for $(y_{i_1}(j))_{j=i_k+1}^{i_{k+1}}\le\hdots\le (y_{i_m}(j))_{j=i_k+1}^{i_{k+1}}$. By Equation~(\ref{eqn:rk}), the two sub-sequences have the same length. Let $\sigma_k$ be the permutation  of $i_{k}+1\hh i_{k+1}$ switching the aforementioned two sub-sequences. Then $\sigma_k\cd (x_i(j))_{j=i_k+1}^{i_{k+1}}=(y_i(j))_{j=i_k+1}^{i_{k+1}}$ for all $i\in I$. Plainly, $\sigma_k$ fixes all $\omega_i$ with $i\in I$.


Now assume $k=m$. If $i_1=0$, then $$(\omega_i(i_m+1),\hdots,\omega_i(d),\omega_i(1),\hdots,\omega_i(i_1))=(\omega_i(i_m+1),\hdots,\omega_i(d))=(0\hh0)$$ for all $i\in I$. Hence the same argument as above shows that there is a permutation $\sigma_m$ of $i_m+1\hh d$ such that $\sigma_m\cd (x_i(j))_{j=i_m+1}^{d}=(y_i(j))_{j=i_m+1}^{d}$ for all $i\in I$. If $i_1>0$, then $$(\omega_i(i_m+1),\hdots,\omega_i(d),\omega_i(1),\hdots,\omega_i(i_1))=(0\hh 0,1\hh 1).$$ 
One can still get a sub-sequence $\underline v$ (resp. $\underline v'$) of indices in $\{i_m+1\hh d,1\hh i_{1}\}$ by recording those $j$ such that $x_i(j)-\omega_i(j)=1$ (resp. $y_i(j)-\omega_i(j)=1$) for some $i\in I$. Again, $\underline v$ and $\underline v'$ has equal length. Then, there exists $\sigma_m=(\sigma,\underline a)\in W_a$, where $\sg$ is a permutation of $i_m+1\hh d,1\hh i_{1}$ sending $\underline v$ to $\underline v'$, and $\underline a$ is some integer vector whose entries belong to $\{-1,0,1\}$, such that $\sigma_m$ fixes each $\omega_i$ with $i\in I$ and $\sigma_m\cd (x_i(j))_{j=i_m+1}^{i_1}=(y_i(j))_{j=i_m+1}^{i_1}$ for all $i\in I$.



Eventually, by multiplying those block-wise (affine) permutations $\sigma_1,\hdots,\sigma_m$ together, we arrive at some element $s\in W_I$ such that $s\cd (x_i)_{i\in I}=(y_i)_{i\in I}$.

\medskip
(5) is given by Proposition 4.5(iii) and Proposition 4.14 in \cite{gortz2001flatness}. 

\medskip
(6) is the content of Proposition 2.8(ii) in \cite{richarz2013schubert}. 
\end{proof}

\begin{rem}
    In part (1) of Theorem~\ref{thm:open_cover1}, when $\Gamma=\Omega$ is the standard alcove, the special fiber of $LG_r(\Gamma)$ is actually stratified into a union of $(B,B)$-cells according to \cite{gortz2001flatness}.  This is essentially due to the fact that the affine flag variety $\cF_F$ is equivariantly isomorphic to $\cF_\Omega$. This is not true in general when $\Gamma$ is just a face of the standard alcove.
\end{rem}

\subsection{Quiver Grassmannians associated to lattice configurations.}\label{subsec:quiver grass preliminary} We recall some notions and properties given in \cite{he2023degenerations}. Let $\Gamma=\{[L_i]\}_{i\in \cI}\subset \fb_d$ be a convex lattice configuration. 

\begin{nt}\label{nt:morphisms in a lattice configuration}
Fix representatives $\{L_i\}_i$. For each pair $i,j\in \cI$ let $n_{i,j}$ be the minimal integer such that $\pi^{n_{i,j}}L_i\subset L_j$. Denote by $F_{i,j}$ the map from $L_i$ to $L_j$ induced by multiplying with $\pi^{n_{i,j}}$. Denote $\ov L_i=L_i/\pi L_i$ (resp. $\ov L_j=L_j/\pi L_j$) and $f_{i,j}\colon \ov L_i\rightarrow \ov L_j$ the map induced by $F_{i,j}$. We say that $f_{i,j}$ factors through $\overline L_k$ (or simply factors through $k$) if $f_{i,j}=f_{k,j}\circ f_{i,k}$.
\end{nt}
Regarding the morphisms $f_{i,j}$ we have the following lemma:
\begin{lem}\cite[Lemma 2.11]{he2023degenerations}\label{lem:convex hull of two points}
Let $\{[L_0],...,[L_a]\}\subset \Gamma$ be the convex hull of $[L_0]$ and $[L_a]$ such that $[L_i]$ is adjacent to $[L_{i+1}]$. Then
\begin{enumerate}
\item $f_{0,a}=f_{a-1,a}\circ\cdots\circ f_{0,1}$ and $f_{a,0}= f_{1,0}\circ\cdots\circ f_{a,a-1}$, in other words, $n_{0,a}=\sum_{i=0}^{a-1}n_{i,i+1}$ and $n_{a,0}=\sum_{i=0}^{a-1}n_{i+1,i}$;
    \item $\ker f_{i,i+1}=\mathrm{Im}f_{i+1,i}$ and $\ker f_ {i+1,i}=\mathrm{Im}f_{i,i+1}$ for $0\leq i\leq a-1$;
    \item $\ker f_{i,i+1}\cap\ker f_{i,i-1}=0$ for $1\leq i\leq a-1$. In particular, $\dim\im f_{0,a}=\dim\im f_{0,1}$.
\end{enumerate}
\end{lem}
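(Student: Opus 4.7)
The plan is to reduce the two endpoints of the chain to a common diagonal form, invoke the convex-hull hypothesis to pin down every intermediate $L_i$ explicitly, and then read off (1)--(3) by direct linear algebra in the resulting basis.

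\textbf{Diagonalisation.} After replacing $L_a$ by a homothetic representative, we may assume $L_0\supseteq L_a$. Applying the elementary-divisor theorem to the $R$-module $L_0/L_a$ yields a basis $e_1,\ldots,e_d$ of $L_0$ and integers $0=c_1\le\cdots\le c_d$ such that
\[L_a=\langle \pi^{c_j}e_j\mid 1\le j\le d\rangle.\]
In particular $n_{0,a}=c_d$ and $n_{a,0}=0$.

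\textbf{Pinning down the intermediate $L_i$.} The family $\{\pi^kL_0\cap\pi^mL_a\}_{k,m\in\ZZ}$ is closed under intersections and so realises the entire convex hull of $[L_0]$ and $[L_a]$. In the diagonal basis this lattice equals $\langle \pi^{\max(k,\,m+c_j)}e_j\rangle$, whose homothety class depends only on $n:=k-m$; using the normalisation $c_1=0$, a short check shows the classes for $n\in\{0,1,\ldots,c_d\}$ are pairwise distinct, giving exactly $c_d+1$ elements. By hypothesis the convex hull consists of $a+1$ vertices, so $a=c_d$, and (after relabelling if needed) we may take
\[L_i=\bigl\langle \pi^{c_k^{(i)}}e_k\bigm|1\le k\le d\bigr\rangle,\qquad c_k^{(i)}:=\max\{0,\,c_k-(a-i)\}.\]
The incremental set $S_i:=\{k:c_k^{(i+1)}>c_k^{(i)}\}=\{k:c_k\ge a-i\}$ is then manifestly monotone: $S_0\subseteq S_1\subseteq\cdots\subseteq S_{a-1}$.

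\textbf{Verification of (1)--(3).} From $c_k^{(i+1)}-c_k^{(i)}\in\{0,1\}$ and $S_i\neq\emptyset$ we obtain $\pi L_i\subseteq L_{i+1}\subsetneq L_i$, whence $n_{i,i+1}=1$ and $n_{i+1,i}=0$; summing gives (1). For (2), pick the basis $\bar b_k:=\overline{\pi^{c_k^{(i)}}e_k}$ of $\ov L_i$. A direct check shows that $f_{i,i+1}$ (induced by $\pi\cdot$) kills $\bar b_k$ exactly when $k\notin S_i$, while $f_{i+1,i}$ (induced by inclusion) maps $\overline{\pi^{c_k^{(i+1)}}e_k}$ to $\bar b_k$ if $k\notin S_i$ and to $0$ if $k\in S_i$; hence
\[\ker f_{i,i+1}=\im f_{i+1,i}=\s\{\bar b_k:k\notin S_i\},\]
and the symmetric identity is analogous. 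The same recipe gives $\ker f_{i,i-1}=\s\{\bar b_k:k\in S_{i-1}\}$, so
\[\ker f_{i,i+1}\cap\ker f_{i,i-1}=\s\{\bar b_k:k\in S_{i-1}\setminus S_i\}=0\]
by monotonicity, proving (3). The concluding equality $\dim\im f_{0,a}=\dim\im f_{0,1}=|S_0|$ follows at once, since each $f_{j,j+1}$ is injective on $\im f_{0,j}$ by the same containment $S_{j-1}\subseteq S_j$.

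The central difficulty lies in the diagonal-chain step: one must genuinely use the convex-hull hypothesis---not merely the adjacency of consecutive $[L_i]$---because without the monotonicity of $S_\bullet$ one can construct adjacent chains where the conclusion of (3) fails. Once the tight diagonal form is secured, the rest of the argument is routine bookkeeping in the chosen basis.
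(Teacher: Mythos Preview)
The paper does not give its own proof of this lemma; it is quoted verbatim from \cite{he2023degenerations}. Your argument is correct and complete: the identification of the convex hull with the finite set of classes $\{[\pi^kL_0\cap\pi^mL_a]\}$, followed by the explicit diagonal description $c_k^{(i)}=\max\{0,c_k-(a-i)\}$ and the monotone sets $S_i$, is precisely the mechanism one expects (and is essentially the argument in the cited source). The only point worth tightening is the sentence ``after relabelling if needed'': you should remark that the adjacency graph of the convex hull is a path with endpoints $[L_0]$, $[L_a]$ (which your diagonal computation in fact shows, since $[L_i]$ and $[L_j]$ are non-adjacent for $|i-j|\ge 2$), so the chain ordering is uniquely determined by its endpoints and no genuine relabelling ambiguity arises.
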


We now recall the quiver $Q(\Gamma)$ associated to $\Gamma$. Firstly, let $Q(\Gamma)'$ be the quiver such that $Q(\Gamma)'_0=\cI$ and $Q(\Gamma)'_1=\{(i,j)\in \cI^2\mid i\neq j\}$, where $(i,j)$ represents an arrow with source $i$ and target $j$. 
    
\begin{defn}\label{defn:quiver associated to lattice}
The quiver $Q(\Gamma)$ is obtained from $Q(\Gamma)'$ by removing all arrows $(i,j)$ such that there exists a path $\ell$ in $Q(\Gamma)'$ with length at least 2 such that $f_\ell=f_{i,j}$, where $f_\ell$ is defined in Notation~\ref{nt:quivers}.
\end{defn}

According to Lemma~\ref{lem:convex hull of two points} (3), if $[L_i],[L_j]$ are not adjacent, then $f_{i,j}$ factors through all $\overline L_k$ for $[L_k]$ in the convex hull of $[L_i]$ and $[L_j]$. Hence, adjacent vertices of $Q(\Gamma)$ always correspond to adjacent lattice classes in $\Gamma$, but not the other way around.

\begin{ex}\label{ex:quiver of a simplex}
    Let $\Gamma=\{[L_i]\}_{1\leq i\leq n}$ be a simplex in $\fb_d$, such that $L_1\subset L_2\subset\cdots\subset L_n\subset\pi^{-1}L_1$. Then $Q(\Gamma)$ is the cycle $(1,2,\dotsc,n,1)$.
\end{ex}

\begin{prop}\cite[Proposition 2.12]{he2023degenerations}\label{prop:ambient representation1}
We have
\begin{enumerate}
\item For any $i,j\in \cI$, there exists a path $\ell$ in $Q(\Gamma)$ such that $f_\ell=f_{i,j}$;
    \item for any two paths $\ell_1$, $\ell_2$ in $Q(\Gamma)'$ such that $s(\ell_1)=s(\ell_2)$, $t(\ell_1)=t(\ell_2)$ and $f_{\ell_i}\neq 0$ for both $i=1,2$, we have $f_{\ell_1}=f_{\ell_2}$;
    \item for any cycle $\ell$ in $Q(\Gamma)'$, we have $f_{\ell}=0$;
\end{enumerate}
\end{prop}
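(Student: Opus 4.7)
The plan is to isolate a single structural observation that unifies parts (2) and (3), and then use it to prove (1) via an extremality argument.

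First I would set up notation. For a path $\ell=(v_0,v_1,\dotsc,v_m)$ in $Q(\Gamma)'$ from $i$ to $j$, I let $N_\ell:=\sum_{k=1}^{m}n_{v_{k-1},v_k}$, so that on lattices $F_\ell$ is simply multiplication by $\pi^{N_\ell}$. From the minimality of $n_{a,c}$ one immediately has the subadditivity $n_{a,c}\le n_{a,b}+n_{b,c}$, which iterates to $N_\ell\ge n_{i,j}$. The key dichotomy I would point out is: either $N_\ell=n_{i,j}$, in which case $f_\ell=f_{i,j}$, or $N_\ell>n_{i,j}$, in which case the image of $F_\ell$ lies in $\pi L_j$ and thus $f_\ell=0$. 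Part (2) drops out of this at once, since both $f_{\ell_1}$ and $f_{\ell_2}$ being nonzero forces both to equal $f_{i,j}$.

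For (3), I would prove $N_\ell\ge 1$ for any cycle. The two-vertex base case $n_{i,j}+n_{j,i}\ge 1$ holds because $\pi^{n_{i,j}+n_{j,i}}L_i\subset L_i$, and if the exponent were non-positive, chasing inclusions would force $L_i=\pi^{-n_{i,j}}L_j$, contradicting $[L_i]\ne [L_j]$. For longer cycles, I would merge the first two arrows via subadditivity to reduce to a shorter closed path and induct.

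The substantive part is (1). Among paths $\ell$ in $Q(\Gamma)'$ from $i$ to $j$ with $f_\ell=f_{i,j}$ (equivalently $N_\ell=n_{i,j}$), I would first argue that all vertices must be distinct: any repeat $v_k=v_l$ produces a simple subcycle (after shrinking to an innermost repetition) which by (3) has $N\ge 1$, and excising it leaves a path from $i$ to $j$ of total weight $\le n_{i,j}-1$, violating subadditivity. This bounds the length by $|\cI|$, so a maximum-length path $\ell$ exists. If any arrow of $\ell$ were absent from $Q(\Gamma)$, Definition~\ref{defn:quiver associated to lattice} would supply a length-$\ge 2$ factorization of that arrow in $Q(\Gamma)'$ inducing the same $f$; splicing it into $\ell$ would yield a strictly longer path still realizing $f_{i,j}$, contradicting maximality. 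Hence every arrow of $\ell$ lies in $Q(\Gamma)$, and (1) follows.

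The main obstacle will be the extremality step for (1): a priori, iteratively substituting factorizations of individual arrows could continue indefinitely, and one needs to know that the process halts. The length bound coming from (3) via the distinct-vertices argument is exactly what guarantees termination, so the realization that admissible paths must be simple is the technical linchpin of the whole proposition.
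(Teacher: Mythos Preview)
The paper does not include a proof of this proposition; it is imported wholesale from \cite{he2023degenerations} by citation. Your argument is correct and entirely self-contained. The key structural observation---that for any path $\ell$ from $i$ to $j$ one has $N_\ell\ge n_{i,j}$ with $f_\ell=f_{i,j}$ if equality holds and $f_\ell=0$ otherwise---immediately dispatches (2) and (3), and your termination argument for (1) via the bound on path length is exactly right: any path realizing $f_{i,j}$ must be simple by (3), so lengths are bounded by $|\cI|$, and a maximal one can have no arrow outside $Q(\Gamma)$ lest the factorization promised by Definition~\ref{defn:quiver associated to lattice} produce a strictly longer path still realizing $f_{i,j}$. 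One small point worth making explicit in the inductive step for (3): after merging the first two edges of a cycle $v_0,v_1,\dotsc,v_m,v_0$ you obtain $v_0,v_2,\dotsc,v_m,v_0$, which is again a genuine cycle (its vertices remain distinct) so the induction hypothesis applies cleanly; you gesture at this but it is the place a careless reader might stumble.
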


\begin{nt}\label{nt:ambient representation2}
     Let $M_\Gamma=(\overline L_i)_i$ be the representation of $Q(\Gamma)$ with linear maps $(f_{i,j})_{i,j}$ for each arrow $(i,j)$ in $Q(\Gamma)_1$.
\end{nt}

\begin{prop}\cite[Proposition 2.10, 2.15]{he2023degenerations}\label{prop:ambient representation}
    \begin{enumerate}
        \item The special fiber $LG_r(\Gamma)_0$ is set-theoretically identified with the quiver Grassmannian $\Gr(\mathbf r,M_\Gamma)$, where $\mathbf r=(r,\dotsc,r)$ as in Notation~\ref{nt:dimension vector number};
        \item $\Gamma$ is contained in an apartment if and only if $M_\Gamma$ is a projective representation. 
    \end{enumerate}
\end{prop}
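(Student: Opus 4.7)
The plan is to treat the two parts separately, exploiting Proposition~\ref{prop:ambient representation1} for (1) and a diagonal decomposition for (2).

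For part (1), I would pass to the special fiber and observe that killing $\pi$ trivialises most inclusions: the inclusion $\pi^k L_a\hookrightarrow L_b$ induces the zero map on $\overline{L_a}\to\overline{L_b}$ whenever $k>n_{a,b}$ (since the excess $\pi$ acts as zero), and induces precisely $f_{a,b}$ when $k=n_{a,b}$. Thus a $\kappa$-point $(E_i)$ of $LG_r(\Gamma)_0$ is a tuple of $r$-dimensional subspaces $E_i\subset \overline{L_i}$ with $f_{a,b}(E_a)\subset E_b$ for every $a,b\in\cI$. By Proposition~\ref{prop:ambient representation1}(1), every $f_{a,b}$ factors as $f_\ell$ for some path $\ell$ in $Q(\Gamma)$, so compatibility with \emph{all} $f_{a,b}$ is equivalent to compatibility with the generating maps $(f_{i,j})_{(i,j)\in Q(\Gamma)_1}$. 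This is exactly the sub-representation condition defining $\Gr(\mathbf r,M_\Gamma)$.

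For the forward direction of (2), assume $\Gamma\subset\cA$ is contained in an apartment. Then we may pick a common basis $e_1,\ldots,e_d$ and representatives $L_i=\langle\pi^{-a_i(1)}e_1,\ldots,\pi^{-a_i(d)}e_d\rangle$ indexed by integer vectors $a_i$. A direct calculation shows $n_{i,j}=\max_k(a_i(k)-a_j(k))$, and that on the basis element $\overline{\pi^{-a_i(k)}e_k}$ of $\overline{L_i}$ the map $f_{i,j}$ sends it either to $\overline{\pi^{-a_j(k)}e_k}$ (when the maximum is attained in coordinate $k$) or to $0$ otherwise. This yields a splitting $M_\Gamma=\bigoplus_{k=1}^d M^{(k)}$ where each $M^{(k)}$ has $M^{(k)}_i=\kappa$ at every vertex, with morphisms being identities or zero. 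One then identifies each $M^{(k)}$ with an indecomposable projective for the quiver algebra $\kappa Q(\Gamma)/I$ (with $I$ cut out by the relations of Proposition~\ref{prop:ambient representation1}(2)(3)): concretely, its support is a ``downward'' interval in the poset of vertices reachable from a source, matching the combinatorial description of $P_v=e_v\cdot \kappa Q(\Gamma)/I$. Since projectives are closed under direct sums, $M_\Gamma$ is projective.

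For the converse, the strategy is to decompose $M_\Gamma$ into indecomposable projective summands and show that each summand has dimension vector $\leq\mathbf 1$; such a ``thin'' decomposition produces a compatible basis for all $L_i$ (by lifting each 1-dimensional line at each $\overline{L_i}$ to an element of $L_i$ via Nakayama), placing $\Gamma$ in the apartment spanned by that basis. Equivalently, argue contrapositively: if $\Gamma$ avoids every apartment, then by Lemma~\ref{lem:convex hull of two points} we can locate a triple of lattices whose induced subquiver of $Q(\Gamma)$ forces an indecomposable summand of $M_\Gamma$ with dimension $>1$ at some vertex, which one then obstructs via an $\Ext^1$ computation against a suitable simple module.

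The main obstacle is the converse of part~(2): while the forward direction is a clean coordinate-by-coordinate decomposition, the reverse requires a genuine classification of the indecomposable projectives of the (possibly complicated) quiver algebra $\kappa Q(\Gamma)/I$ and a dictionary between their dimension vectors and the apartment-embeddability of $\Gamma$. All the combinatorial inputs are available in Lemma~\ref{lem:convex hull of two points} and Proposition~\ref{prop:ambient representation1}, but marshalling them into a structural statement about projectives is where the real work lies.
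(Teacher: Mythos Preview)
The paper does not prove this proposition: it is quoted verbatim from the authors' earlier work \cite{he2023degenerations} (Propositions~2.10 and~2.15 there) and no argument is given here. So there is no in-paper proof to compare your proposal against.

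On the substance: your treatment of part~(1) is correct and is the standard reduction. For the forward direction of~(2), the coordinate-wise splitting $M_\Gamma=\bigoplus_{k=1}^d M^{(k)}$ is indeed the right move, though your identification of each $M^{(k)}$ with an indecomposable projective $P_v$ is asserted rather than argued---you need to exhibit, for each $k$, a vertex $v$ from which every other vertex is reachable via maps that are nonzero on the $k$-th coordinate, and this is where the convexity of $\Gamma$ enters. For the converse you correctly locate the difficulty but do not resolve it: the scheme ``projective $\Rightarrow$ decomposition into thin indecomposables $\Rightarrow$ lift to a common basis'' is the right shape, but the middle implication requires knowing that every indecomposable projective over $\kappa Q(\Gamma)/I$ has dimension vector $\le\mathbf 1$, and neither Lemma~\ref{lem:convex hull of two points} nor Proposition~\ref{prop:ambient representation1} delivers this on its own. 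Your contrapositive alternative via an $\Ext^1$ obstruction is plausible but is only gestured at.
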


In practice, we will sometimes identify $\Gamma$ with the set of vertices of $Q(\Gamma)$, and write $\Gamma=\{[L_v]\}_v$ instead of $\Gamma=\{[L_i]\}_i$ for naturality. This way, by $v\in\Gamma$ we mean that $v$ is a vertex of $Q(\Gamma)$. In addition, the maps $f_{i,j}$ will be replaced with $f_{u,v}$.

We end this section with one more notation for future use.

\begin{nt}\label{nt:rank vector}
    Given a sub-representation $M=(V_v)_{v\in \Gamma}$ of $M_\Gamma$. 
    We want to record all dimensions of $V_v$ as well as ranks of maps $f_\ell$ along $M$, where $\ell$ is any path in $Q(\Gamma)$ such that $f_\ell$ is non-trivial on $M_\Gamma$. According to Proposition~\ref{prop:ambient representation1}, this is equivalent to 
  associating a tuple $$\Phi(M)=(\Phi_{u,v}(M))_{u,v}:=(\dim f_{u,v}(V_u))_{u,v}\in\mathbb Z_{\geq 0}^{|\Gamma|^2}$$ to $M$. The $\Phi(M)$ will be referred to as the \textit{rank vector} of $M$. This gives a map $\Phi\colon \Gr({\underline x},M_\Gamma)\rightarrow\mathbb Z^{|\Gamma|^2}_{\geq 0}$, where $\underline x$ is any dimension vector.
    The fibers of $\Phi$ induces a stratification of $\Gr(\underline x,M_\Gamma)$ where each point in the same stratum has the same rank vector. We call this stratification the \textit{rank-vector stratification}. The partial order ``$\leq$" on $\mathbb Z^{|\Gamma|^2}_{\geq 0}$ induces a partial order on the rank-vector strata, which we call the \textit{rank-vector (partial) order} and still denote by ``$\leq$". 
\end{nt}

\begin{rem}\label{rem:rank n orbit strata}
    Let $\Gamma$ be a simplex in $\fb_d$, and suppose two representations in $\Gr(\mathbf r,M_\Gamma)$ have the same rank vectors. Then by Theorem~\ref{thm:open_cover1} (4), their corresponding lattice chains in the affine flag variety differ by the action of an element in $P_{\omega_I}$, which then reduces to an automorphism of $M_\Gamma$ that transfers one of the quiver representation to the other. This implies that the rank-vector stratification of $\Gr(\mathbf r,M_\Gamma)=LG_r(\Gamma)_0$ is equivalent to the stratification induced by orbits of the automorphism group of $M_\Gamma$.
\end{rem}

\section{Standard coverings of linked Grassmannians}
In this section, we will define the standard open covering for the linked Grassmannian associated to a convex lattice configuration contained in an apartment. It is the mathematical object that captures the topological structure of a linked Grassmannian (in particular, the topology of the special fiber), thus shedding light on its geometry.
\begin{nt}\label{nt:complex} Let $\Gamma$ be a convex lattice configuration in $\fb_d$. 
Denote $\Theta(\Gamma)$ to be the complex spanned by the vertices in $\Gamma$ inside $\fb_d$. For a face $\tau$ of $\Theta(\Gamma)$, let $V(\tau)$ be the set of vertices of $\tau$.
\end{nt}

Throughout this section, we assume that $\Gamma$ is contained in the
standard apartment $\cA$ as stated above. Then every lattice in $\Gamma$ is 
of the form $L=\lag \pi^{c_{1}}e_1\hh\pi^{c_{d}}e_d \rag$, where $e_1\hh e_d$ is the standard basis of $K^d$. We also fix a lattice quiver representing $\Gamma$
and identify it with an array of integer vectors in $(\ZZ^d)^{|\Gamma|}$. 
We shall single out the maximal simplices of $\Theta(\Gamma)$, which are themselves faces in $\fb_d$. In particular, each of them has a well-defined type:  
\begin{defn}\cite[\S 9.1]{kottwitz2000minuscule}
Let $I$ a non-empty subset of $\{0\hh d-1\}$. A tuple $v=(v_i)_{i\in I}\subset (\ZZ^d)^{|I|}$ is a face of \textit{type} $I$ if 
\begin{enumerate}
    \item $v_i(k)\le v_j(k)\le v_i(k)+1$, for any $ i\le j\in I$ and $k=1\hh d$;
    \item $\sum (v_i)-\sum (v_j)=i-j$, $\fa i,j\in I$. 
\end{enumerate}
\end{defn}
We shall reserve the notation $\omega_I$ for the standard face of type $I$, that is, $\omega_I=(\omega_i)_{i\in I}$, and denote $W_I\subset\ti{W}$ to be the stabilizer of $\omega_I$ as before. Following \cite[\S 9]{kottwitz2000minuscule}, we identify the set of type-$I$ faces with the coset space $\ti{W}/W_I$. In other words, each of such faces will be presented by a coset $gW_I$. It is not hard to see that the stabilizer group of $g\cd\omega_I$ in $\ti{W}$ is $W_{g\omega_I}:=gW_Ig^{-1}$. 

\begin{defn}
Given a face $g\cd \omega_I$ of type $I$, define 
\[\mathbf{adm}^{g,I}_{r,d}=\{h\cd W_{g\omega_I}\mid g\cd\omega_i(k)\le h\cd (g\cd\omega_i)(k)\le g\cd\omega_i(k)+1,\hspace{.1in} \Sigma(hg\cd\omega_i)-\Sigma(g\cd\omega_i)=r,\hspace{.1in} \fa i\in I,\ 1\leq k\leq d\}.\]
\end{defn}
This definition is an analogue of Definition~\ref{defn:adm2} and~\ref{defn:adm2'}. 
Hereafter, we will also refer to elements in $\mathbf{adm}^{g,I}_{r,d}$ as \textit{admissible} should no confusion arise. 
Similar to the consideration for cases where $\Gamma$ is a face in the previous section, in order to connect the quiver-representation-theoretic perspective with techniques from affine flag varieties, we shall stratify the special fiber of a general linked Grassmannian into (fibered products of) some parahoric Schubert cells, which are parametrized by double cosets in $W_{g\omega_I}\bs W_a/W_{(\iota^rg)\omega_I}$. For this purpose, we generalize Definition \ref{defn:equiv1} into the following equivalence relation on $\mathbf{adm}^{g,I}_{r,d}$:
\begin{equation}\label{eqn:equiv}
    h_1W_{g\omega_I}\sim h_2W_{g\omega_I} \text{ if and only if } W_{g\omega_I}h_1W_{g\omega_I}=W_{g\omega_I}h_2W_{g\omega_I}.
\end{equation}
\begin{nt}\label{nt:equiv}
Given any $\mathbf x\in \mathbf{adm}^{g,I}_{r,d}$, denote $C_{\mathbf x}$ to be the equivalence class of $\mathbf x$ inside $\mathbf{adm}^{g,I}_{r,d}$. 
\end{nt}
\begin{rem}\label{rem:equiv2}
Set $F$ to be the face whose vertex set is represented by $(\iota^rg)\cd\omega_I$. The same argument as in Remark \ref{rem:equiv} shows that for equivalent admissible faces, the corresponding $(P_{g\omega_I},P_{F})$-cells are identical.  
\end{rem}

\begin{nt}\label{nt:simplicial decomp and lattice quiver}
    Suppose 
$\Gamma=\bigcup_{j\in\cJ} V(\tau_j)$ is a convex configuration represented by a lattice quiver $\{g_j\cd \omega_{I_j}\}_{j\in \cJ}$, where $\{\tau_j\mid j\in \cJ\}$ is the set of maximal simplices of $\Theta(\Gamma)$, $I_j$ is the type of $\tau_j$, and $g_j\cd \omega_{I_j}$ represents $\tau_j$. 
We will also write $\tau_j=(\tau_{j,i})_{i\in V(\tau_j)}\subset (\ZZ^d)^{|I_j|}$.
As an index set, we may assume $\cJ$ is totally ordered. Let $\Gamma_j=V(\tau_j)$ and $\Gamma_{j,j'}=V(\tau_j\cap\tau_{j'})$ be the set of vertices. 
Denote $W_j$ and $W_{j,j'}$ to be the subgroups of $W_a$ of pointwise stabilizers of $\Gamma_j$ and $\Gamma_{j,j'}$ respectively. 
\end{nt}

Consider the linked Grassmannians $LG_r(\Gamma)$ and $LG_r(\Gamma_j)$. 
Using the associated moduli functors of linked Grassmannians, it is not hard to see that the following diagram is an equalizer: 
\begin{equation}\label{eqn:equalizer}
LG_r(\Gamma)\to X:=\begin{tikzcd}
 \prod_{j\in\cJ} LG_r(\Gamma_j) \ar[yshift=2pt]{r}{\aaa} \ar[yshift=-2pt]{r}[swap]{\beta} & \prod_{\substack{j< j'\\\Gamma_{j,j'}\neq\emptyset}}LG_r(\Gamma_{j,j'})=:Y
\end{tikzcd}    
\end{equation}
where the first arrow is the product of the natural forgetful morphisms, $\aaa$ is induced by the forgetful morphisms $LG_r(\Gamma_{j})\to LG_r(\Gamma_{j,j'})$, and $\beta$ by all $LG_r(\Gamma_{j'})\to LG_r(\Gamma_{j,j'})$. In other words, $LG_r(\Gamma)\cong Y\times_{Y\times Y}X$ (\cite[\href{https://stacks.math.columbia.edu/tag/01KM}{Tag 01KM}]{stacks-project}). This, together with the content in the previous section, motivates the following definition:

\begin{defn}\label{defn:adm_type}
Let $\Gamma$ be as in Notation~\ref{nt:simplicial decomp and lattice quiver}. 
A $\Gamma$-\textit{admissible collection} $\mathbf{x}$ of \textit{size} $r$ is a tuple of cosets $(h_jW_j)_{j\in\cJ}$ such that 
$h_jW_j\in \mathbf{adm}^{g_j,I_j}_{r,d}$
and that $W_{j,j'}h_jW_{j,j'}=W_{j,j'}h_{j'}W_{j,j'}$ for all $j,j'\in \cJ$. 
\end{defn}
\begin{rem}Suppose $W_{j}hW_j=W_jh'W_j$, i.e. $h=ah'b$, where $a,b\in W_j$. Since $W_j\subset W_{j,j'}$, \[W_{j,j'}hW_{j,j'}=W_{j,j'}(ah'b)W_{j,j'}=W_{j,j'}h'W_{j,j'}.\] 
In particular, the double coset $W_{j,j'}hW_{j,j'}$ is independent of the choice of a representative of $W_jhW_j$, and the notion of $\Gamma$-admissible collection is well-defined.
\end{rem}
\begin{nt}
Hereafter, with Notation~\ref{nt:simplicial decomp and lattice quiver}, given $j,j'\in \cJ$, we shall denote $P_j$ to be the parahoric subgroup associated to $\tau_j$, and $P_{j,j'}$ to $\tau_j\cap\tau_{j'}$. Let $F_j$ be the face represented by $\iota^rg_j\cd\omega_{I_j}$ and $F_{j,j'}$ represented by $(\iota^rg_{j,j'})\cd \omega_{I_{j,j'}}$, where $I_{j,j'}$ is the type of $\tau_j\cap\tau_{j'}$ and 
$g_{j,j'}\cd \omega_{I_{j,j'}}$ is the restriction of the lattice quiver in Notation~\ref{nt:simplicial decomp and lattice quiver} to $\tau_j\cap \tau_{j'}$.
\end{nt}
By Remark \ref{rem:equiv2}, if $(h_jW_j)_{j\in\cJ}$ is admissible, one then has $C_{h_j\iota^{-r}}(P_{j,j'},P_{F_{j,j'}})=C_{h_{j'}\iota^{-r}}(P_{j,j'},P_{F_{j,j'}})$. This is the main motivation of our definition, which will lead to a stratification of the special fiber of the linked Grassmannian in terms of fibered products of parahoric Schubert cells. More precisely,
by Proposition \ref{prop:chain}, one can embed the special fiber $LG_r(\Gamma_j)_0$ into the affine flag variety $\cF_{F_j}:=SL_d/\cP_{F_j}$.  
Hence, by Diagram (\ref{eqn:equalizer}), we get an embedding of  $LG_r(\Gamma)_0$ into a fibered product of affine flag varieties $\prod_{j\in\cJ}\cF_{F_j}$. Moreover, from the fundamental stratification of each $LG_r(\Gamma_j)_0$ given in Theorem \ref{thm:open_cover1} (1), we conclude that $LG_r(\Gamma)_0$ can be stratified into a union of fiber products of parahoric Schubert cells
\[S_{\mathbf{x}}:=\prod_{j\in \cJ} C_{h_j\iota^{-r}}(P_j,P_{F_j})\] 
where $\mathbf{x}=(h_jW_j)_{j\in \cJ}$ runs through all $\Gamma$-admissible collections, and each fibered product is fibered over admissible cells of common faces, i.e. $C_{h_j\iota^{-r}}(P_j,P_{F_j})$ and $C_{h_{j'}\iota^{-r}}(P_{j'},P_{F_{j'}})$ are fibered over $C_{h_j\iota^{-r}}(P_{j,j'},P_{F_{j,j'}})=C_{h_{j'}\iota^{-r}}(P_{j,j'},P_{F_{j,j'}})$. 

The next lemma is an analogue of Theorem \ref{thm:open_cover1}(4), which compares Schubert cells inside different affine flag varieties by considering them as rank loci. Recall that all affine flag varieties in this paper are associated to parahoric subgroups of $\SL_d(\kappa(\!(t)\!))$, and are ind-schemes over $\kappa$ (see Section \ref{sec:aff_flag}). Therefore, in the next proof we may take $K=\kappa(\!(t)\!)$\footnote{Note that this does not change our setting for linked Grassmannian as it is only relevant to the embedding of the \textit{special fiber} of the linked Grassmannian into a fibered product of affine flag varieties.}, and  this is the only occasion in the current section where we impose such a restriction.  
\begin{lem}\label{lem:rank2}
The Schubert cell $C_{h_j\iota^{-r}}(P_{g_j\omega_{I_j}},P_{F_j})$ is a rank locus of $\kappa$-points in $LG_r(\Gamma_j)_0$ which has the same rank vector $r_{i,j}$ (see Theorem \ref{thm:open_cover1} (4)) as $C_{g^{-1}_jh_jg_j\iota^{-r}}(P_{\omega_{I_j}},P_{\iota^r\cd \omega_{I_j}})$ in $LG_r([\omega_{I_j}])_0$. 
\end{lem}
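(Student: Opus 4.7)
The plan is to build a natural isomorphism between $LG_r([\omega_{I_j}])_0$ and $LG_r(\Gamma_j)_0$ from a concrete lift $\tilde g_j$ of $g_j$ to $\GL_d(K)$, and then show that this isomorphism matches the two Schubert cells and preserves rank vectors. Concretely, writing $g_j=(\sigma,\underline a)$, a natural lift is $\tilde g_j := P_\sigma\cdot\diag(t^{-\underline a(\sigma(1))},\dotsc,t^{-\underline a(\sigma(d))})$, where $P_\sigma$ is the permutation matrix associated to $\sigma$. A direct computation shows $\tilde g_j\cdot \omega_i = g_j\cdot\omega_i$ as sub-lattices of $K^d$, not merely up to homothety, for each $i\in I_j$. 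Since $\tilde g_j$ commutes with multiplication by $t$, it induces compatible $R$-linear isomorphisms of the lattice chain $\omega_{I_j}$ with $g_j\cdot\omega_{I_j}$ and, after reducing modulo $t$, of the quiver representations $M_{[\omega_{I_j}]}$ and $M_{\Gamma_j}$ (Notation~\ref{nt:ambient representation2}).

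Via Proposition~\ref{prop:ambient representation}(1), this isomorphism of quiver representations yields an isomorphism $\phi_{g_j}\colon LG_r([\omega_{I_j}])_0\xrightarrow{\sim}LG_r(\Gamma_j)_0$ sending $(E_i)_i$ to $(\tilde g_j E_i)_i$; because ranks of linear maps are preserved under isomorphisms of quiver representations, $\phi_{g_j}$ sends each rank-vector stratum (Notation~\ref{nt:rank vector}) onto the stratum labeled by the same rank vector. On the affine flag variety side, the embeddings described in the proof of Theorem~\ref{thm:open_cover1}(1) send $(E_i)_i$ to $(t^{-1}p_i^{-1}(E_i))_i$ and interact with $\phi_{g_j}$ by left multiplication by $\tilde g_j$ on lattice chains. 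The cell $C_{g_j^{-1}h_jg_j\iota^{-r}}(P_{\omega_{I_j}},P_{\iota^r\omega_{I_j}})$ is, by construction, the $P_{\omega_{I_j}}$-orbit of the reference chain $(g_j^{-1}h_jg_j)\omega_{I_j}$. Applying $\tilde g_j$ converts this into the orbit of $\tilde g_j\cdot(g_j^{-1}h_jg_j)\omega_{I_j}=h_jg_j\omega_{I_j}=(h_j\iota^{-r})\cdot F_j$ under $\tilde g_j P_{\omega_{I_j}}\tilde g_j^{-1}=P_{g_j\omega_{I_j}}$ (equality inside $\SL_d(K)$, since conjugation preserves the determinant), which is precisely $C_{h_j\iota^{-r}}(P_{g_j\omega_{I_j}},P_{F_j})$.

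Combining the two previous steps, $\phi_{g_j}$ identifies the Schubert cell in $LG_r([\omega_{I_j}])_0$ with the one in $LG_r(\Gamma_j)_0$; since the former is a rank locus with an explicit rank vector by Theorem~\ref{thm:open_cover1}(4), and $\phi_{g_j}$ preserves rank vectors, the latter is a rank locus with the same rank vector, proving the lemma. The main technical nuisance is that $\tilde g_j$ does not in general belong to $\SL_d(K)$, so it does not directly give a morphism between the two affine flag varieties (which are $\SL_d$-quotients); moreover, $\iota^r g_j\omega_{I_j}$ and $g_j\iota^r\omega_{I_j}$ typically represent \emph{different} faces of the apartment. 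I bypass these obstructions by working at the level of quiver representations, where scaling is harmless, and by describing Schubert cells through their parahoric orbits on lattice chains, which transform predictably under $\tilde g_j$ even though the ambient flag varieties are not literally identified.
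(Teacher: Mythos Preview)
Your proposal is correct and follows essentially the same route as the paper: both arguments lift $g_j$ to $\GL_d(K)$, use the conjugation relation $P_{g_j\omega_{I_j}}=\tilde g_j P_{\omega_{I_j}}\tilde g_j^{-1}$, and observe that left multiplication by the lift carries the $P_{\omega_{I_j}}$-orbit of $(g_j^{-1}h_jg_j)\cdot\omega_{I_j}$ to the $P_{g_j\omega_{I_j}}$-orbit of $h_jg_j\cdot\omega_{I_j}$ while preserving ranks. The paper's proof is terser (it simply says the rank vectors are ``obviously the same'' and cites the conjugation identity), whereas you spell out the lift explicitly, frame the rank preservation via the induced isomorphism of quiver representations $M_{[\omega_{I_j}]}\cong M_{\Gamma_j}$, and address the $\GL_d$ versus $\SL_d$ issue more carefully---all of which makes your write-up clearer but not substantively different.
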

\begin{proof}
    First of all, the admissibility of the two cells are equivalent, as $g_j\omega_{I_j}\le (h_jg_j)\cd\omega_{I_j}\le g_j\omega_{I_j}+(1,1\hh 1)$ if and only if $\omega_{I_j}\le (g_j^{-1}h_jg_j)\cd\omega_{I_j}\le \omega_{I_j}+(1,1\hh 1)$.
    
    Second, recall that $P_{g_j\omega_{I_j}}=g_jP_{\omega_{I_j}}g^{-1}_j$ (\cite[Remark 2]{haines2008parahoric}), where abusing notation we identify $g_j$ with a lift of it to $\GL_d(\kappa(\!(t)\!))$.  
    Therefore, multiplying with $g_j^{-1}$ on the left induces an isomorphism from the $P_{g_j\omega_{I_j}}$-orbit of the lattice chain $(h_jg_j)\cd\omega_{I_j}$ to the $P_{\omega_{I_j}}$-orbit of the lattice chain $(g_j^{-1}h_jg_j)\cd\omega_{I_j}$, hence an isomorphism between the two Schubert cells.  The corresponding rank vectors are obviously the same.
\end{proof}

The natural forgetful maps $LG_r(\Gamma_j)_0\to LG_r(\Gamma_{j,j'})_0$ and $LG_r(\Gamma_{j'})_0\to LG_r(\Gamma_{j,{j'}})_0$ clearly preserve the rank vectors in the following sense: the rank vector of the restriction of a lattice chain to the smaller lattice configuration is a sub-vector of the rank vector of the original lattice chain. This justifies our claim on the fibered products of Parahoric Schubert cells. 

\begin{defn}\label{defn:Bruhat_strata}
Let $\Gamma$ be a convex configuration, and $0<r<d$ be fixed integers. Denote $\mathbf{adm}_r(\Gamma)$ to be the set of all $\Gamma$-admissible collections of size-$r$. We shall refer to the above stratification in terms of $S_{\mathbf x}$ ($\mathbf x\in \mathbf{adm}_r(\Gamma)$) as the \textit{Bruhat stratification} of the special fiber.
\end{defn}

We shall now construct an open cover of $LG_r(\Gamma)$, which is a natural generalization of the one given in Theorem \ref{thm:open_cover1} (3), where $\Gamma$ was a face in the standard alcove. Before doing so, note that the equivalence relations on $\mathbf{adm}^{g_j,I_j}_{r,d}$ ($j\in\cJ$) (Equation \ref{eqn:equiv}) induce an equivalence relation on $\mathbf{adm}_r(\Gamma)$ in an obvious way, and each distinct $\Gamma$-admissible collection $\mathbf{x}$ can be identified with a $\cJ$-tuple $(C_{\mathbf{x}_j})_{j\in \cJ}$, where each $C_{\mathbf{x}_j}$ is an equivalence classes of elements in $\mathbf{adm}^{g_j,I_j}_{r,d}$. Every $C_{\mathbf{x}_j}$ can be further identified with a finite set of elements in $(\ZZ^d)^{|I_j|}$.

\begin{nt}
Given a lattice quiver $(L_i)_{i\in \cI}=(\lag \pi^{c_{i,1}}e_1\hh \pi^{c_{i,d}}e_d\rag )_{i\in \cI}$ representing some convex configuration $\Gamma$, we denote $e^i_k$ for $\pi^{c_{i,k}}e_k$. 
\end{nt}
   
\begin{defn}\label{defn:stan_open}
Let $\Gamma=\bigcup_{j\in \cJ}V(\tau_j)$ be as in Notation~\ref{nt:simplicial decomp and lattice quiver}. We may also write $(L_i)_{i\in \cI}=(g_j\omega_{I_j})_{j\in\cJ}$ as the lattice quiver representing $\Gamma$. 
Let $\mathbf{x}=(h_jW_j)_{j\in\cJ}$ be any $\Gamma$-admissible collection of size $r$ and let $\mathbf{x}_j=(h_jg_j)\cd \omega_{I_j}\in(\ZZ^d)^{|I_j|}$. 
We define $U_{\mathbf{x}}$ to be the open sub-scheme of $LG_r(\Gamma)$ parametrizing objects $(E_i)_i$ such that for every $j\in\cJ$, there exists some $\mathbf{x'}=(\mathbf x'_i)_{i\in V(\tau_j)}\in C_{\mathbf{x}_j}$ such that $L_i/E_i$ is generated by the image of $\{e^i_k\mid \mathbf x' _i(k)=\tau_{j,i}(k)\}$ for every $i$ in $V(\tau_j)$. 
\end{defn}
The next proposition justifies our nomenclature:
\begin{prop}\label{prop:std_open} Let $\Gamma=\{L_i\}_i$ be a convex lattice configuration  in the standard apartment $\cA$.  Then
$\mathfrak{U}_{\Gamma}:=\{U_\mathbf{x}\mid \mathbf{x}\in \mathbf{adm}_r(\Gamma)\}$ is an open cover of $LG_r(\Gamma)$. Moreover, $S_{\mathbf x}$ is contained in $U_{\mathbf x}$.
\end{prop}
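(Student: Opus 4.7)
The plan is to reduce everything to Theorem~\ref{thm:open_cover1}(3) applied to each maximal simplex $\tau_j$ of $\Theta(\Gamma)$, via the natural forgetful morphisms $\varphi_j\colon LG_r(\Gamma)\to LG_r(\Gamma_j)$ coming from the equalizer diagram (\ref{eqn:equalizer}). Definition~\ref{defn:stan_open} is engineered so that, at the level of $S$-points, $U_\mathbf{x}=\bigcap_{j\in\cJ}\varphi_j^{-1}(U_{\mathbf{x}_j})$, where $U_{\mathbf{x}_j}\subset LG_r(\Gamma_j)$ is the chart provided by Theorem~\ref{thm:open_cover1}(3) (transported to $\tau_j=g_j\omega_{I_j}$ by the $\widetilde W$-equivariance of the linked Grassmannian functor, the definition being independent of the choice of representative lattices). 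The containment $S_\mathbf{x}\subset U_\mathbf{x}$ then follows immediately from the fiber-product presentation $S_\mathbf{x}=\prod_{j\in\cJ}S_{\mathbf{x}_j}$ and the single-simplex inclusions $S_{\mathbf{x}_j}\subset U_{\mathbf{x}_j}$.

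For the covering property, I would first reduce to the special fiber: since $LG_r(\Gamma)\to\spec(R)$ is proper (\cite[Prop.~2.3]{he2023degenerations}), any open subset containing the closed fiber $LG_r(\Gamma)_0$ is the whole of $LG_r(\Gamma)$, so it suffices to cover $LG_r(\Gamma)_0$. Given $P\in LG_r(\Gamma)_0$, set $P_j:=\varphi_j(P)\in LG_r(\Gamma_j)_0$. By Theorem~\ref{thm:open_cover1}(1), each $P_j$ lies in a unique Schubert cell $S_{\mathbf{x}_j}=C_{h_j\iota^{-r}}(P_j,P_{F_j})$ for some $h_jW_j\in\mathbf{adm}^{g_j,I_j}_{r,d}$; assemble these into a candidate collection $\mathbf{x}=(h_jW_j)_{j\in\cJ}$.

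The crucial remaining step is to show that $\mathbf{x}$ is $\Gamma$-admissible, i.e.\ $W_{j,j'}h_jW_{j,j'}=W_{j,j'}h_{j'}W_{j,j'}$ whenever $\Gamma_{j,j'}\neq\emptyset$. The further forgetful maps $LG_r(\Gamma_j)_0\to LG_r(\Gamma_{j,j'})_0\leftarrow LG_r(\Gamma_{j'})_0$ send $P_j$ and $P_{j'}$ to the same point of $LG_r(\Gamma_{j,j'})_0$, namely the image of $P$. Applying Theorem~\ref{thm:open_cover1}(1) to $\Gamma_{j,j'}$, this common image belongs to a unique Schubert cell, forcing
\[
C_{h_j\iota^{-r}}(P_{j,j'},P_{F_{j,j'}})=C_{h_{j'}\iota^{-r}}(P_{j,j'},P_{F_{j,j'}})
\]
as subsets of the affine flag variety $\cF_{F_{j,j'}}$. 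Rewriting this as an equality of double cosets in $W_{j,j'}\backslash\widetilde W/W_{F_{j,j'}}$ and absorbing $\iota^r$ via the identity $\iota^{-r}W_{F_{j,j'}}=W_{j,j'}\iota^{-r}$ (which comes from $W_{F_{j,j'}}=\iota^r W_{j,j'}\iota^{-r}$, the generalization of Remark~\ref{rem:equiv2}) yields $W_{j,j'}h_jW_{j,j'}\iota^{-r}=W_{j,j'}h_{j'}W_{j,j'}\iota^{-r}$, and hence $W_{j,j'}h_jW_{j,j'}=W_{j,j'}h_{j'}W_{j,j'}$, which is exactly the $\Gamma$-admissibility condition of Definition~\ref{defn:adm_type}. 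With admissibility established, $P_j\in U_{\mathbf{x}_j}$ for each $j$ by Theorem~\ref{thm:open_cover1}(3), so $P\in U_\mathbf{x}$ by unpacking Definition~\ref{defn:stan_open}. The main obstacle I anticipate is precisely this double-coset translation: the argument is conceptually transparent but requires careful bookkeeping of the two sets of Iwahori-Weyl groups (for $\Gamma_{j,j'}$ and for its shift $F_{j,j'}$) and of how the Bruhat-side parametrization by $h\iota^{-r}\in W_a$ relates to the admissibility-side parametrization by $hW_{g\omega_I}$.
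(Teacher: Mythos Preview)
Your proposal is correct and takes essentially the same approach as the paper. The paper's proof is extremely terse---it simply asserts that the general case ``follows directly from the one-simplex case and the construction of $S_\mathbf{x}$ and $U_\mathbf{x}$''---and you have carefully unpacked what that sentence means: the identification $U_\mathbf{x}=\bigcap_j\varphi_j^{-1}(U_{\mathbf{x}_j})$, the properness reduction to the special fiber, and the admissibility check via the common images in $LG_r(\Gamma_{j,j'})_0$ (which the paper establishes in the discussion preceding Definition~\ref{defn:Bruhat_strata}).
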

\begin{proof}
When $\Gamma$ is a face of the standard alcove, the proposition is given in Theorem \ref{thm:open_cover1} (3). Similarly, suppose $\Gamma$ consists of vertices of a simplex $[g\omega_I]$. Then by replacing $W_I$, $F$ and $\mathbf{adm}^{I}_{r,d}$ in \textit{loc.cit.} by $W_{g\omega_I}$, the face $F'=(\iota^rg)\cd\omega_I$ and $\mathbf{adm}^{g,I}_{r,d}$ respectively, and similarly replacing $B$ and $P_F$ by appropriate Iwahori subgroups $B'$ and $P_{F'}$ respectively, the argument remains valid. 
The general case $\Gamma=\bigcup_{j\in \cJ}V(\tau_j)$ follows directly from the one-simplex case and the construction of $S_\mathbf x$ and $U_\mathbf x$. 
\end{proof}


\begin{defn}\label{defn:main}
Let $\Gamma$ be a convex configuration in $\cA$, and $0<r<d$ be fixed integers. We shall call the $\mathfrak{U}_{\Gamma}$ in Proposition~\ref{prop:std_open} the \textit{standard open cover} of $LG_r(\Gamma)$.
\end{defn}
The index set $\mathbf{adm}_r(\Gamma)$ comes equipped with some natural partial order:
\begin{defn}\label{defn:order1}
With all notation as in Definition \ref{defn:adm_type}, the \textit{generalized Bruhat order} ``$\preceq$" on $\mathbf{adm}_r(\Gamma)$ is defined as follows: $$\mathbf{x}=(h_jW_j)_{j\in\cJ}\preceq \mathbf{y}=(h'_jW_j)_{j\in\cJ}$$ if and only if $W_j(h_j\iota^{-r})W_{F_j}\preceq_j W_j(h'_j\iota^{-r})W_{F_j}$ for all $j\in\cJ$, where $\preceq_j$ is the Bruhat order on $W_j\backslash W_a/W_{F_j}$ (Remark \ref{rem:order_double_coset}) and $F_j$ is the face whose vertices are represented by $(\iota^rg_j)\cd\omega_{I_j}$.  
\end{defn}

Recall from Notation~\ref{nt:topological order} that the Bruhat stratification of $LG_r(\Gamma)_0$ induces a topological order $\preceq_{\text{top}}$. When $\Theta(\Gamma)$ is a simplex, the generalized Bruhat order is just the usual Bruhat order among double cosets, and  agrees with the topological order by Theorem \ref{thm:open_cover1} (2). It also follows that $\mathbf{x}\preceq_{\text{top}}\mathbf{y}$ implies $\mathbf{x}\preceq\mathbf{y}$ even when  $\Theta(\Gamma)$ is not a simplex. However, it is not clear whether the two orders above coincide in general. Meanwhile, recall from Section~\ref{subsec:quiver grass preliminary} that we have the identification $LG_r(\Gamma)_0=\Gr(\mathbf r,M_\Gamma)$. To every point $[M]$ of $LG_r(\Gamma)$ corresponding to a representation $M$ we associated a rank vector $\Phi(M)$ as in Notation~\ref{nt:rank vector}. The following lemma shows that the Bruhat stratification is compatible with the rank-vector stratification.

\begin{lem}\label{lem:bruhat order and rank vector}
    The rank vector is independent of the choice of points in $S_{\mathbf x}$, and points in different $S_\mathbf x$'s have distinct rank vectors. Equivalently, the Bruhat stratification of $LG_r(\Gamma)_0$ agrees with the rank-vector stratification of $\Gr(\mathbf r,M_\Gamma)$.
 \end{lem}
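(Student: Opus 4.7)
The plan is to reduce both statements to the one-simplex case, where the connection between Schubert cells and ranks is already controlled by Theorem~\ref{thm:open_cover1}(4) together with Lemma~\ref{lem:rank2}. By definition, $S_{\mathbf{x}}$ is the fibered product over $j\in\mathcal J$ of the parahoric Schubert cells $C_{h_j\iota^{-r}}(P_j,P_{F_j})$ in $LG_r(\Gamma_j)_0$, and the rank vector $\Phi(M)$ of a point $M=(V_v)_v$ records, for each pair $u,v\in\Gamma$, the integer $\dim f_{u,v}(V_u)$.

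For the first statement, I would first handle pairs $u,v$ lying inside a common maximal simplex $\tau_j$. In that case, Theorem~\ref{thm:open_cover1}(4) together with Lemma~\ref{lem:rank2} tells us that the Schubert cell $C_{h_j\iota^{-r}}(P_j,P_{F_j})$ is exactly the locus in $LG_r(\Gamma_j)_0$ where the within-simplex ranks $(r_{u,v})_{u,v\in V(\tau_j)}$ take specific values determined by the double coset class $h_jW_j$. Since the forgetful map $LG_r(\Gamma)_0\to LG_r(\Gamma_j)_0$ carries points in $S_{\mathbf x}$ into this cell, the entries $\Phi_{u,v}(M)$ for $u,v$ in a common simplex are locally constant on $S_{\mathbf x}$. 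For pairs $u,v$ not sharing any maximal simplex, I would invoke Proposition~\ref{prop:ambient representation1}(1) to write $f_{u,v}=f_\ell$ for some path $\ell$ in $Q(\Gamma)$, and decompose $\ell$ into adjacent segments that each lie in a single simplex. The key point then is that in an apartment all the maps $f_{w,w'}$ are diagonal in the standard basis and their kernels/images are spanned by explicit coordinate vectors (Lemma~\ref{lem:convex hull of two points}); propagating $V_u$ along $\ell$ one step at a time, $\dim f_{u,v}(V_u)$ can be expressed in terms of the within-simplex rank data at each successive simplex of the path.

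For the second statement, I would argue contrapositively: if two admissible collections $\mathbf x=(h_jW_j)_j$ and $\mathbf y=(h'_jW_j)_j$ give points in $LG_r(\Gamma)_0$ with the same rank vector $\Phi$, then, restricting to any maximal simplex $\tau_j$, the collection of within-simplex ranks $(r_{u,v})_{u,v\in V(\tau_j)}$ agrees for both. The uniqueness part of Theorem~\ref{thm:open_cover1}(4) (transported to $\tau_j$ via Lemma~\ref{lem:rank2}) then forces the double cosets $W_{g_j\omega_{I_j}}h_jW_{g_j\omega_{I_j}}=W_{g_j\omega_{I_j}}h'_jW_{g_j\omega_{I_j}}$ to coincide on every simplex, so $\mathbf x=\mathbf y$ as admissible collections and hence $S_{\mathbf x}=S_{\mathbf y}$.

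I expect the main obstacle to be the inter-simplex step of the first statement, namely that the rank of a composition $f_{u,v}=f_\ell$ is generally not determined by the ranks of the individual factors. Overcoming this relies crucially on two ingredients specific to our setting: the diagonal/monomial form of all the transition maps coming from lattices in a single apartment, and the explicit descriptions in Lemma~\ref{lem:convex hull of two points} of $\ker f_{w,w'}$ and $\operatorname{Im} f_{w',w}$ as spans of standard basis vectors. These together reduce the composition computation to bookkeeping of coordinate subspaces, which depends only on the within-simplex Schubert data fixed by $\mathbf x$.
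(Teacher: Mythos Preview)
Your reduction of the within-simplex entries and your argument for the second statement are essentially the paper's, and they are fine. The difference lies in the inter-simplex step of the first statement.

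You propose to propagate $V_u$ along a path $\ell$ through several simplices and argue that the rank of the composite is determined by within-simplex data via ``bookkeeping of coordinate subspaces,'' invoking that the $f_{w,w'}$ are monomial and that Lemma~\ref{lem:convex hull of two points} describes kernels and images as coordinate spans. This is where your plan is both more complicated than necessary and not quite solid as stated. The subspace $V_u$ is \emph{not} a coordinate subspace, so monomiality of the maps does not by itself let you read off $\dim f_{u,v}(V_u)$ from coordinate data; and Lemma~\ref{lem:convex hull of two points} does not actually describe kernels and images in terms of standard basis vectors---parts (2) and (3) are the abstract relations $\ker f_{i,i+1}=\mathrm{Im}\,f_{i+1,i}$ and $\ker f_{i,i+1}\cap\ker f_{i,i-1}=0$.

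The paper avoids all of this with a single stroke: part (3) of Lemma~\ref{lem:convex hull of two points} gives $\ker f_{u,v}=\ker f_{u,w}$ where $w$ is the vertex of $\Conv(u,v)$ adjacent to $u$, hence $\dim f_{u,v}(V_u)=\dim f_{u,w}(V_u)$. Since $u$ and $w$ are adjacent, they lie in a common maximal simplex of $\Theta(\Gamma)$, so every entry of $\Phi(M)$ is already a within-simplex rank. No multi-simplex propagation or coordinate bookkeeping is needed; the inter-simplex case collapses immediately to the case you already handled via Theorem~\ref{thm:open_cover1}(4) and Lemma~\ref{lem:rank2}.
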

\begin{proof}
    Pick any point in $S_{\mathbf x}$, which corresponds to a representation $M=(V_v)_v$ of $Q(\Gamma)$. Given $u,v\in \Gamma$, let $w$ be the vertex in the convex hull of $u$ and $v$ that is adjacent to $u$. By Lemma~\ref{lem:convex hull of two points} (3), we see that $\dim f_{u,v}(V_u)=\dim f_{u,w}(V_u)$. Since $u$ and $w$ are adjacent, they must be contained in the same maximal simplex of $\Theta(\Gamma)$. As a result, $\Phi(M)$ is determined by all $\Phi_{u,v}(M)$'s where $u$ and $v$ are in the same simplex. The conclusion hence follows from Theorem~\ref{thm:open_cover1} (4) and Lemma~\ref{lem:rank2}.
\end{proof}

We will show that the generalized Bruhat order agrees with topological order in the following two cases: (1) when $\Gamma$ is weakly independent (\S \ref{sec:weakind}); (2) when $r=1$ (\S \ref{sec:r1}). According to Lemma~\ref{lem:bruhat order and rank vector}, this is equivalent to showing that both orders are equivalent to the rank-vector order.

\section{Locally Weakly Independent Configurations}\label{sec:weakind}
In this section, we introduce the notion of a locally weakly independent configuration $\Gamma$ in $\fb_d$, which is a simultaneous generalization of locally linearly independent configurations in \cite{he2023degenerations} and simplices in $\fb_d$. For such configurations, we prove that the quiver stratification of $\Gr(\mathbf r,M_\Gamma)$ agrees with the rank-vector stratification (hence coincides with the
Bruhat-stratification of $LG_r(\Gamma)_0$ by Lemma~\ref{lem:bruhat order and rank vector});  and the topological order agrees with rank-vector order, see Theorem~\ref{thm:stratification}. As a corollary, we show that for arbitrary lattice configurations in an apartment, the generalized Bruhat order is the same as the rank-vector order (Corollary~\ref{cor:bruhat order=rank order}), and, in particular, the same as the topological order for locally weakly independent lattice configurations.

The main ingredient is the decomposition (Theorem~\ref{thm:decomposition}) of every sub-representation of $M_\Gamma$ 
into a sum of representations of $Q(\Gamma)$ of dimension at most $\mathbf 1$. As an application, we characterize all possible rank vectors of representations in $\Gr(\mathbf r,M_\Gamma)$ when $\Gamma$ is a simplex (Proposition~\ref{prop:simplex strata}), which provides an alternate perspective of the existing result in \cite[\S~2.5]{gortz2010supersingular}.
Finally, we investigate the global geometry of linked Grassmannians of locally weakly independent configurations in Theorem~\ref{thm:flatness}.

Denote $\Gamma=([L_v])_v$.  We will use $v$ to represent a lattice class $[L_v]$ in $\Gamma$, and thus identify $\Gamma$ with the set of vertices in $Q(\Gamma)$, if there is no confusion. 
For $u,v\in \Gamma$, let $\Conv(u,v)$ be the convex hull of $[L_u]$ and $[L_v]$ in $\fb_d$. 
Let $\overline L_u$ and $f_{u,v}\colon \overline L_u\rightarrow \overline L_v$ be as in Notation~\ref{nt:morphisms in a lattice configuration}.

\begin{defn}\label{defn:ess_lli}
A convex lattice configuration $\Gamma$ is \textit{locally weakly independent} at $v\in\Gamma$ if there is a subset $I_v\subset \Gamma$ such that 
\begin{enumerate}
    \item for each $u\neq v$, $f_{u,v}$ factors through $w$ for some $w\in I_v$; and 
    \item $\{\im f_{w,v}=\ker f_{v,w}\}_{w\in I_v}$ are linearly independent subspaces of $\overline L_v$.
\end{enumerate}
We say that $\Gamma$ is \textit{locally weakly independent} if it is so at all $v\in \Gamma$.
\end{defn}

\subsection{First properties}\label{subsec:weakly indep first properties} In this subsection, we prove the basic properties of locally weakly independent lattice configurations, which will be used in the sequel to investigate the representations of the corresponding quivers.

\begin{prop}\label{prop:weakly indep}
Suppose $\Gamma$ is locally weakly independent. Then 
\begin{enumerate}
     \item  the arrows of $Q(\Gamma)$ are exactly the ones of the form $(w\rightarrow v)$ where $v\in \Gamma$ and $w\in I_v$;
     \item for any non-repeating path $v_0,...,v_m$ in $Q(\Gamma)$, we have $f_{v_{m-1},v_m}\circ\cdots \circ f_{v_0,v_1}\neq 0$. Moreover, there is a unique non-repeating path in $Q(\Gamma)$ from $v_0$ to $v_m$;
    \item all cycles of $Q(\Gamma)$ are induced in a bijective way from maximal simplices of $\Gamma$;
        \item $Q(\Gamma)$ is a union of cycles where any two cycles  share at most one vertex.
\end{enumerate}
\end{prop}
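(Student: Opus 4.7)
The plan is to prove parts (1), (3), (4), (2) in this order.

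For part (1), I would use the definitions plus Proposition~\ref{prop:ambient representation1}(3). The direction "$w\not\in I_v\Rightarrow (w,v)$ is not an arrow" is immediate: Definition~\ref{defn:ess_lli}(1) gives a factorization $f_{w,v}=f_{w',v}\circ f_{w,w'}$ with $w'\in I_v$, $w'\neq w$, so $(w,v)$ is removed by Definition~\ref{defn:quiver associated to lattice}. For the converse, assume $w\in I_v$, chosen minimal so that $f_{w,v}\neq 0$, and suppose $f_{w,v}=f_\ell$ for some path $\ell$ of length $\geq 2$ ending at some arrow $u\to v$. Factoring $f_{u,v}$ through some $w'\in I_v$ yields $\im f_{w,v}\subseteq \im f_{w',v}$; linear independence then forces $w'=w$, so $\ell$ contains a cycle in $Q(\Gamma)'$ based at $w$, which must evaluate to $0$ by Proposition~\ref{prop:ambient representation1}(3), contradicting $f_\ell=f_{w,v}\neq 0$.

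For part (3), I plan to verify that each maximal simplex $\tau$ with chain $L_{v_0}\subset\cdots\subset L_{v_k}\subset\pi^{-1}L_{v_0}$ induces the cycle $v_0\to\cdots\to v_k\to v_0$ in $Q(\Gamma)$; by (1), this reduces to checking $v_{i-1}\in I_{v_i}$, which follows from Example~\ref{ex:quiver of a simplex} applied to $\tau$ together with the fact that potential factorizations of $f_{v_{i-1},v_i}$ through external vertices are ruled out by the linear-independence analysis of (1). The converse direction, that every cycle of $Q(\Gamma)$ arises this way, follows from Lemma~\ref{lem:convex hull of two points}, since consecutive vertices in such a cycle correspond to adjacent lattice classes forming a cyclic chain. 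For part (4), I would argue that two distinct maximal simplices $\tau,\tau'$ cannot share two or more vertices. Suppose they do; then some shared vertex $w$ must have distinct cyclic predecessors $u\in\tau$, $u'\in\tau'$ (otherwise, iterating the cyclic-predecessor relation through shared vertices would identify the two chains, forcing $\tau=\tau'$). Both $u,u'\in I_w$ by (1) and (3). Rotating each cyclic chain so $w$ is the base makes $L_u$ (resp.~$L_{u'}$) the "top" of $\tau$'s (resp.~$\tau'$'s) chain; since $|\tau\cap\tau'|\geq 2$, there exists another shared vertex $v'\neq w$, and the chain relations in the two simplices give $L_w\subsetneq L_{v'}\subseteq L_u\cap L_{u'}$. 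Consequently
\[
0\neq \im f_{v',w}=\pi L_{v'}/\pi L_w\subseteq \im f_{u,w}\cap \im f_{u',w},
\]
contradicting the linear independence in Definition~\ref{defn:ess_lli}(2).

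Finally, for part (2), with (3) and (4) in hand $Q(\Gamma)$ is a union of oriented cycles any two of which share at most one vertex, so any non-repeating path $v_0\to\cdots\to v_m$ decomposes uniquely as a concatenation of contiguous cyclic sub-arcs transitioning through shared vertices --- giving uniqueness. For non-vanishing I would induct on $m$: Proposition~\ref{prop:ambient representation1}(2) identifies the inductive composition $h_{m-1}:=f_{v_{m-2},v_{m-1}}\circ\cdots\circ f_{v_0,v_1}$ with $f_{v_0,v_{m-1}}$, reducing the question to whether $f_{v_{m-1},v_m}\circ f_{v_0,v_{m-1}}\neq 0$; the block-of-cycles structure forces $v_{m-1}$ to lie in the convex hull of $L_{v_0},L_{v_m}$ in $\fb_d$, so Lemma~\ref{lem:convex hull of two points}(1) yields the telescoping $n_{v_0,v_m}=n_{v_0,v_{m-1}}+n_{v_{m-1},v_m}$, identifying $h_m$ with $f_{v_0,v_m}\neq 0$. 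The main obstacle will be (4): I must ensure the cyclic-predecessor analysis genuinely locates a shared vertex where the two predecessors diverge (rather than closing up to give $\tau=\tau'$), and must carefully track the chain relations that place the auxiliary $v'$ below $L_u\cap L_{u'}$. A secondary concern in (2) is rigorously tying the block-graph transitions in $Q(\Gamma)$ to convex-hull geometry in $\fb_d$ so that Lemma~\ref{lem:convex hull of two points} applies to the telescoping step.
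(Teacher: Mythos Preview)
Your reordering (1), (3), (4), (2) introduces a genuine circularity that breaks the argument in two places.

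First, the converse of (3) is not justified. Knowing that consecutive vertices $v_i,v_{i+1}$ of a cycle are adjacent lattice classes does not by itself yield a single cyclic chain $L_{v_0}\subset L_{v_1}\subset\cdots\subset L_{v_n}\subset\pi^{-1}L_{v_0}$: adjacency gives a containment between each pair, but compatibility of all these containments into one chain is exactly what requires work. The paper handles this by first establishing (2) and then using $f_{v_{i-1},v_n}\circ f_{v_{i-1},v_i}\neq 0$ to force $L_{v_{i-1}}\subset L_{v_i}\subset L_{v_n}$. Lemma~\ref{lem:convex hull of two points} alone does not provide this, and nothing in your sketch replaces it.

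Second, your inductive step in (2) rests on a false claim: ``the block-of-cycles structure forces $v_{m-1}$ to lie in the convex hull of $L_{v_0},L_{v_m}$.'' Take $\Gamma$ a single simplex with three vertices and cycle $v_0\to v_1\to v_2\to v_0$. Then $v_0,v_2$ are adjacent, so $\Conv(v_0,v_2)=\{v_0,v_2\}$ and $v_1\notin\Conv(v_0,v_2)$; yet the path $v_0\to v_1\to v_2$ is non-repeating and you need its composition nonzero. (Indeed, Proposition~\ref{prop:weakly indep 2}(1), proved \emph{after} this proposition in the paper, shows only local extremal vertices lie in the convex hull.) So Lemma~\ref{lem:convex hull of two points}(1) does not give the telescoping you want.

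The paper avoids both problems by proving (2) directly from local weak independence, with no appeal to (3) or (4): assuming a minimal vanishing composition, it uses \cite[Proposition~2.15(2)]{he2023degenerations} to produce a second path to $v_{m-1}$ through $v_m$, and then peels back the last vertex of each path using the linear-independence condition at successive targets to force the two paths to coincide. Once (2) is in hand, (3) and (4) follow quickly --- (4) in particular is a one-line consequence of the uniqueness in (2), rather than the simplex-intersection analysis you outline.
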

\begin{proof}
(1) Follows directly from the definition.

(2) To reduce notation, we write $f_{i,j}=f_{v_i.v_j}$. For the first half of the statement, assume on the contrary that $f_{m-1,m}\circ\cdots \circ f_{0,1}=0$. We can take $m$ to be minimal and hence $m\geq 2$ and $f_{m-2,m-1}\circ\cdots \circ f_{0,1}\neq 0$. Then we have $f_{0,m-1}= f_{m-2,m-1}\circ\cdots \circ f_{0,1}=f_{m,m-1}\circ f_{0,m}$ by \cite[Proposition 2.15 (2)]{he2023degenerations}. 
As each $f_{i,j}$ is a composition of morphisms represented by arrows in $Q(\Gamma)$, we found, from the equality, two paths in $Q(\Gamma)$ from $v_0$ to $v_{m-1}$, one passing through $v_m$ and the other not. Moreover, if we denote the two distinct paths respectively by $v_0,u_1,...,u_l,v_{m-1}$ and $v_0,w_1,...,w_s,v_{m-1}$, then $f_{u_l,v_{m-1}}\circ f_{v_0,u_l}=f_{0,m-1}=f_{w_s,v_{m-1}}\circ f_{v_0,w_s}$. According to part (1), we have $u_l,w_s\in I_{v_{m-1}}$, hence $u_l=w_s$ by the locally weak independence of $\Gamma$ at $v_{m-1}$. It follows that $f_{u_{l-1},u_l}\circ f_{v_0,u_{l-1}}=f_{v_0,u_l}=f_{v_0,w_s}=f_{w_{s-1},w_s}\circ f_{v_0,w_{s-1}}$. Argue as above, we have $u_{l-1}=w_{s-1}$. Inductively, we have $l=s$ and $u_i=w_i$ for all $1\leq i\leq s$, which is absurd. 

For the second half, suppose there are two paths from $v_0$ to $v_m$. From what we just proved, the compositions of maps along these paths are both nonzero, hence equal to $f_{0,m}$. A similar argument as above then shows that the two paths are the same. 

(3) We first note that for two adjacent lattice classes $[L_u]$ and $[L_v]$, the map $f_{u,v}$ factors through a lattice class $[L]$ if and only if we can take representatives such that $L_u\subset L\subset L_v\subset \pi^{-1}L_u$. It follows directly that the maximal simplices in $\Gamma$ induce cycles in $Q(\Gamma)$.

Now suppose there is a cycle $v_0,v_1,...,v_n,v_0$ in $Q(\Gamma)$. We write $f_{i,j}=f_{v_i.v_j}$ as in (2), and $L_i=L_{v_i}$. Since $v_0$ and $v_n$ are adjacent, we may assume that $L_0\subset L_n\subset \pi^{-1}L_0$.
By (2), we have $f_{n-1,n}\circ\cdots\circ f_{0,1}\neq 0$. Then $f_{1,n}\circ f_{0,1}\neq 0$. It follows that we can choose representative of $L_1$ such that $L_0\subset L_1\subset L_n$. Similarly, we have $f_{2,n}\circ f_{1,2}\neq 0$, hence $L_1\subset L_2\subset L_n$. Argue recursively, we have $L_{v_0}= L_0\subset L_1\subset\cdots\subset L_n\subset \pi^{-1} L_0$ and $L_0,...,L_n$ form a simplex $\Delta$. 
On the other hand, by the construction of $Q(\Gamma)$, there is no $[L]\in \Gamma$ such that $L_i\subsetneq L\subsetneq L_{i+1}$ for some $i$, where we let $L_{n+1}=\pi^{-1}L_0$, because otherwise $f_{i,i+1}$ factors through $[L]$ and $(v_i,v_{i+1})$ is not an arrow in $Q(\Gamma)$. Hence $\Delta$ is maximal.


(4) Of course, every vertex is contained in a cycle. Suppose there are two cycles in $Q(\Gamma)$ sharing two vertices $v_1$ and $v_2$, then there must be two distinct non-repeating paths either from $v_1$ to $v_2$ or from $v_2$ to $v_1$. This contradicts  (2).
\end{proof}

\begin{rem}\label{rem:weakly indep and pre-linked}
It follows from Proposition~\ref{prop:weakly indep} (2) that $LG_r(\Gamma)_0$ is a pre-linked Grassmannian in the sense of \cite[Definition A.1.2]{Ohrk} when $\Gamma$ is locally weakly independent, see also Remark 2.13 and Remark 2.19 of \cite{he2023degenerations}. Moreover, it follows from \cite[Proposition A.2.2]{Ohrk} that $LG_r(\Gamma)_0$ is smooth at points represented by projective sub-representations of $M_\Gamma$.
\end{rem}

Plainly, a simplex in $\fb_d$ is a locally weakly independent configuration, and the associated quiver is just a cycle as in Example~\ref{ex:quiver of a simplex}. 
Following Proposition~\ref{prop:weakly indep}, one should consider a locally weakly independent configuration $\Gamma$ as a ``tree of simplices", and $Q(\Gamma)$ as a ``tree of cycles." As we already mentioned, locally linearly independent configurations are special examples of locally weakly independent configurations:

\begin{ex}\label{ex:locally linearly indep}
As in \cite{he2023degenerations},  a \textit{locally linearly independent} lattice configuration is a locally weakly independent configuration $\Gamma$ such that for each $v\in\Gamma$,  $I_v$ is the set of vertices $u\neq v\in \Gamma$ such that $L_u$ is adjacent to $L_v$. If $\Gamma$ is locally linearly independent, then all cycles in $Q(\Gamma)$ have length 2 and hence contain exactly two vertices. Moreover, if we replace all cycles with single edges connecting the two vertices, we get a tree. See Figure~\ref{fig:linearex} for an example. 
  \tikzset{every picture/.style={line width=0.5pt}} 
\begin{figure}[ht]
\begin{tikzpicture}[x=0.45pt,y=0.45pt,yscale=-1,xscale=1]
\import{./}{figlinearex.tex}
\end{tikzpicture}
\caption{The quiver associated to a locally linearly independent lattice configuration.}\label{fig:linearex}
\end{figure}
\end{ex}

\begin{nt}\label{nt:Ae}
    Let $\vec e$ be an arrow in $Q(\Gamma)$. As in \cite{he2023degenerations}, we denote by $A_{\vec e}$ the set of vertices $v$ such that the minimal path in $Q(\Gamma)$ from $v$ to $t(\vec e)$ passes through $s(\vec e)$; and by $A_{\cev e}$ the set of vertices such that the minimal path does not pass through $s(\vec e)$. Equivalently, $A_{\vec e}$ is the set of vertices $v$ such that $f_{v,t(\vec e)}$ factors through $s(\vec e)$ and $A_{\cev e}=\Gamma\backslash A_{\vec e}$.
\end{nt}

We include one more example to illustrate the notion of locally weakly independent configurations and Notation~\ref{nt:Ae}.



\begin{ex} On Figure~\ref{fig:weakex} is an example of a locally weakly independent configuration $\Gamma$ in $K^5$. Here we use $(a_1,a_2,a_3,a_4,a_5)\in\mathbb Z^5$ to represent the lattice generated by $\pi^{-a_i}e_i$ for $1\leq i\leq 5$, as in Convention~\ref{conv:alcove}. Let $\vec e$ be the arrow from $(1,1,0,0,0)$ to $(0,0,0,0,0)$. Then $A_{\vec e}$ is the set of vertices in the blue box.
Moreover, the red edges form the unique path in $Q(\Gamma)$ from $(0,0,0,2,0)$ to $(1,2,0,0,0)$, and their convex hull consists of three more vertices $(0,0,0,1,0)$, $(0,0,0,0,0)$ and $(1,1,0,0,0)$. Compare to Proposition~\ref{prop:weakly indep 2}.
\tikzset{every picture/.style={line width=0.5pt}} 
\begin{figure}[ht]
\begin{tikzpicture}[x=0.45pt,y=0.45pt,yscale=-1,xscale=1]
\import{./}{figweakex.tex}
\end{tikzpicture}
\caption{The quiver associated to a locally weakly independent lattice configuration.}\label{fig:weakex}
\end{figure}
\end{ex}

\begin{nt}\label{nt:cycle and simplex}
For a maximal simplex $\Delta$ in $\Gamma$, we denote by $\vec \Delta$ the corresponding cycle in $Q(\Gamma)$.
    For a non-repeating path $\ell$ and a cycle $\vec\Delta$ in $Q(\Gamma)$, it follows from Proposition~\ref{prop:weakly indep} that $\ell\cap\vec \Delta$ is still a path. We call the source and target of this intersection path  \textit{local extremal vertices} of $\ell$.
\end{nt}

The following proposition is a generalization of \cite[Lemma 2.18]{he2023degenerations}.

\begin{prop}\label{prop:weakly indep 2} Let $v$ and $v'$ be two elements in $\Gamma$. 
\begin{enumerate}
    \item $\Conv(v,v')$ consists of all vertices of $Q(\Gamma)$ which is a local extremal vertex of the minimal path from $v$ to $v'$;
    \item for any arrow $\vec e$ in $Q(\Gamma)$, both $A_{\vec e}$ and $A_{\cev e}$ are locally weakly independent, as well as $A_{\vec e}\cup\{t(\vec e)\}$ and $A_{\cev e}\cup\{s(\vec e)\}$.
\end{enumerate}
\end{prop}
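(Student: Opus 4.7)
The plan for (1) is to induct on the number of maximal simplices traversed by the minimal $Q(\Gamma)$-path $\ell$ from $v$ to $v'$, which exists and is unique by Proposition~\ref{prop:weakly indep}(2). In the base case $\ell$ lies in a single cycle $\vec{\Delta}$, so the only local extremal vertices are $\{v, v'\}$. Choosing lattice quiver representatives to present $\Delta$ as a chain $L_0 \subset L_1 \subset \cdots \subset L_n \subset \pi^{-1}L_0$ and writing $v = [L_i]$, $v' = [L_j]$ with $i<j$, a direct computation of the lattice classes arising from $\pi^{-k}L_i \cap L_j$ for $k \in \mathbb{Z}$ (and their further iterated intersections) yields only classes homothetic to $L_i$ or $L_j$, so $\Conv(v,v')\cap\Gamma = \{v,v'\}$.

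For the inductive step, let $u$ be the first local extremal vertex on $\ell$ strictly after $v$; then $\ell = \ell_1 \cdot \ell'$, where $\ell_1 \subset \vec{\Delta}_1$ is a sub-path from $v$ to $u$ within a single cycle and $\ell'$ runs from $u$ to $v'$ through strictly fewer cycles. Combining the base case applied to $\ell_1$ with the inductive hypothesis applied to $\ell'$, the proof reduces to establishing $\Conv(v,v')\cap\Gamma = \Conv(v,u) \cup \Conv(u,v')$. The main obstacle is the ``$\supseteq$'' inclusion, which amounts to showing $u \in \Conv(v,v')$: we produce representatives with $L_v \cap L_{v'}$ homothetic to $L_u$, exploiting that $\Gamma\subset\cA$ so that lattice intersection corresponds to coordinate-wise minimum (Convention~\ref{conv:alcove}) and stitching together the lattice-quiver representatives of $\vec{\Delta}_1$ with those of the next cycle along $\ell'$. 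The ``$\subseteq$'' inclusion is then handled, via the same coordinate-wise-minimum analysis, by verifying that all further iterated intersections of representatives of $v$ and $v'$ remain within $\Conv(v,u) \cup \Conv(u,v')$.

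Part (2) leverages (1) together with the tree-of-cycles structure from Proposition~\ref{prop:weakly indep}~(3),(4). For convexity of $A_{\vec{e}}$, given $u, v \in A_{\vec{e}}$, part (1) identifies $\Conv(u,v) \cap \Gamma$ with the local extremal vertices of the minimal $u$-$v$ path in $Q(\Gamma)$; these vertices are either endpoints or shared between two successive cycles along the path, and the block-tree structure forces them all to sit in $A_{\vec{e}}$ (the cycle-piece of the path containing $\vec{e}$ may incidentally pass through $t(\vec{e})$, but $t(\vec{e})$ is then an interior vertex of that cycle-piece, not a local extremal vertex). The analogous arguments handle $A_{\cev{e}}$, $A_{\vec{e}}\cup\{t(\vec{e})\}$, and $A_{\cev{e}}\cup\{s(\vec{e})\}$. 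For local weak independence of $A_{\vec{e}}$, for each $v$ we take $I_v^{A_{\vec{e}}}$ to be the predecessors of $v$ in $Q(A_{\vec{e}})$; this agrees with $I_v$ in $Q(\Gamma)$ except when $v$ is the successor of $t(\vec{e})$ in the cycle containing $\vec{e}$, in which case the predecessor is replaced by $s(\vec{e})$. Linear independence survives because $\im f_{s(\vec{e}),v} \subset \im f_{t(\vec{e}),v}$ (the former factors through the latter), and the factorization condition is traced directly in the shortened cycle. For $A_{\vec{e}}\cup\{t(\vec{e})\}$ the extra vertex is handled by setting $I_{t(\vec{e})} = \{s(\vec{e})\}$, which suffices because every $f_{u,t(\vec{e})}$ with $u \in A_{\vec{e}}$ factors through $s(\vec{e})$ by the very definition of $A_{\vec{e}}$; the analogous choices apply for $A_{\cev{e}}$ and $A_{\cev{e}}\cup\{s(\vec{e})\}$.
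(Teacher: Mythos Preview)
Your argument for part (1) has a genuine circularity problem. You invoke ``$\Gamma \subset \cA$ so that lattice intersection corresponds to coordinate-wise minimum,'' but the fact that a locally weakly independent configuration lies in an apartment is the final assertion of Theorem~\ref{thm:decomposition}, whose proof (via Lemma~\ref{lem:decompose independent}) depends on Proposition~\ref{prop:weakly indep 2}(2), which in turn rests on part (1). So you cannot appeal to coordinate presentations in $\cA$ at this stage. Your inductive step --- producing $L_v \cap L_{v'}$ homothetic to $L_u$ by ``stitching together lattice-quiver representatives'' --- is precisely where this bites: without a common apartment you have no coordinate-wise-minimum description, and no obvious way to control the intersection.

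The paper's proof avoids all of this by working entirely with the maps $f_{i,j}$ rather than with lattice coordinates. The key move you are missing is to start from the convex hull side: enumerate $\Conv(v,v')$ as a chain $v=v_0,\dotsc,v_n=v'$ of pairwise adjacent classes (Lemma~\ref{lem:convex hull of two points}), observe that $f_{v,v'}$ factors through every $v_i$, and conclude from the uniqueness in Proposition~\ref{prop:weakly indep}(2) that each $v_i$ lies on the minimal path $\ell$. Then, since any simplex has all vertices mutually adjacent while distinct $v_i$'s in the convex-hull chain are adjacent only when consecutive, each cycle $\vec\Delta$ meets $\{v_i\}$ in at most two points; combined with the fact that consecutive $v_i,v_{i+1}$ share a cycle, this forces the $v_i$'s to be exactly the local extremal vertices. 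No induction, no lattice computations, and no apartment hypothesis are needed.

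Your treatment of (2) is more detailed than the paper's one-line ``follows directly from (1),'' and the content is essentially correct once (1) is in hand. The base case in your (1) is also overcomplicated: two vertices in a common simplex are adjacent, so their convex hull is just the pair --- no computation of $\pi^{-k}L_i \cap L_j$ is needed.
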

\begin{proof}
(1) Let $\ell$ be the (unique) minimal path in $Q(\Gamma)$ connecting $v$ and $v'$. Suppose the vertices in $\Conv(v,v')$ are  $v=v_0,v_1,\dotsc,v_{n-1},v'=v_n$, where $v_i$ is adjacent to $v_{i+1}$. By Lemma~\ref{lem:convex hull of two points} (1), we have $f_{v,v'}=f_{v_{n-1},v_n}\circ\cdots \circ f_{v_0,v_1}$. Hence $\ell$ contains all $v_i$. On the other hand, for each cycle $\vec \Delta$ in $Q(\Gamma)$, since all vertices in $\Delta$ are adjacent, the oriented path $\ell\cap\vec \Delta$ contains at most two of the $v_i$. Moreover, by Proposition~\ref{prop:weakly indep}, $v_i$ and $v_{i+1}$ must be contained in the same cycle of $Q(\Gamma)$. It then follows that $\Conv(v,v')$ is the set of all local extremal vertices of $\ell$.

(2) Follows directly from (1).
\end{proof}

We end this subsection with a lemma that will be used to analyze the smoothing property of limit linear series (see Theorem~\ref{thm:smoothing of lls} and Corollary~\ref{cor:conditions}). It is a natural generalization of  \cite[Lemma 3.3]{he2023degenerations} and the  proof is almost identical. Nevertheless, we include it here for the readers' convenience. 

\begin{lem}\label{lem:lifting limit linear series}
Let $\Gamma$ be locally weakly independent. Given a non-empty subset $I\subset \Gamma$ and $r$-dimensional vector spaces $V_v\subset \ov L_v$ for $v\in I$. Suppose for all $u\in \Gamma$, the vector space 
$$V'_u:=\{\epsilon\in \ov L_u|f_{u,v}(\epsilon)\in V_v\mathrm{\ for\ all\ }v\in I\}$$ has dimension at least $r$. Then there is an $\mathbf r$-dimensional sub-representation $M=(U_v)_v$ of $M_\Gamma$ such that $U_v=V_v$ for all $v\in I$.
\end{lem}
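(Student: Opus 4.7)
The plan is to proceed by induction on $|\Gamma \setminus I|$, exploiting the tree-of-cycles description of $Q(\Gamma)$ from Proposition~\ref{prop:weakly indep} (4) and the inheritance of local weak independence under removal of leaf vertices (Proposition~\ref{prop:weakly indep 2} (2)).

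The central structural input, which I would establish first, is that the family $(V'_u)_{u \in \Gamma}$ is itself a sub-representation of $M_\Gamma$. For an arrow $w \to u$ in $Q(\Gamma)$, $\epsilon \in V'_w$, and $v \in I$, the composition $f_{u,v} \circ f_{w,u}$ is the mod-$\pi$ reduction of multiplication by $\pi^{n_{w,u}+n_{u,v}}$ on lattices. The triangle inequality $n_{w,u} + n_{u,v} \geq n_{w,v}$ is an equality exactly when $u \in \Conv(w,v)$, in which case $f_{u,v} \circ f_{w,u} = f_{w,v}$, so $f_{u,v}(f_{w,u}(\epsilon)) = f_{w,v}(\epsilon) \in V_v$; strict inequality introduces an extra factor of $\pi$ and forces vanishing modulo $\pi$, so the image is $0 \in V_v$. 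Either way, $f_{w,u}(\epsilon) \in V'_u$. A useful byproduct is that for $v \in I$ the defining condition of $V'_v$ contains the constraint $f_{v,v}(\epsilon) = \epsilon \in V_v$, so $V'_v \subset V_v$; combined with the dimension hypothesis this forces $V_v = V'_v$, and hence the compatibility $f_{v,v'}(V_v) \subset V_{v'}$ on $I$ is automatic.

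The base case $\Gamma = I$ is then immediate from this compatibility: $(V_v)_{v \in I}$ is itself a sub-representation. For the inductive step I would select a leaf vertex $u \in \Gamma \setminus I$ in the tree of cycles, i.e., a vertex lying in a leaf cycle $\vec\Delta$ of $Q(\Gamma)$ other than the (at most one) attachment vertex of $\vec\Delta$ to the rest of the tree (if $Q(\Gamma)$ has a single cycle, any vertex of $\Gamma \setminus I$ will do). By Proposition~\ref{prop:weakly indep 2} (2), $\Gamma' := \Gamma \setminus \{u\}$ is still locally weakly independent, and the hypothesis of the lemma continues to hold for $(\Gamma', I)$. Invoking a strengthened form of the inductive hypothesis — where the output sub-representation $(U_w)_{w \in \Gamma'}$ is arranged so that $U_w \subset V'_w$ for all $w$ — I would then extend by choosing $U_u$ of dimension $r$ inside $V'_u$ satisfying the two sub-representation constraints along the arrows $u_{\text{prev}} \to u$ and $u \to u_{\text{next}}$ of $\vec\Delta$ incident to $u$.

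The main obstacle is producing such a $U_u$. The containment $f_{u_{\text{prev}}, u}(U_{u_{\text{prev}}}) \subset f_{u, u_{\text{next}}}^{-1}(U_{u_{\text{next}}})$ reduces via Step~1 to checking $f_{u_{\text{prev}}, u_{\text{next}}}(U_{u_{\text{prev}}}) \subset U_{u_{\text{next}}}$ when the path $u_{\text{prev}} \to u \to u_{\text{next}}$ is non-repeating, or to vanishing when $\vec\Delta$ has length two; both follow from the sub-representation property of $(U_w)_{w \neq u}$ on $\Gamma'$ (noting that $u_{\text{prev}} \to u_{\text{next}}$ is an arrow of $Q(\Gamma')$ since removing $u$ from $\Delta$ still leaves a simplex). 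The upper bound has dimension at most $r$ automatically. The delicate point is the lower bound $\dim\bigl(V'_u \cap f_{u, u_{\text{next}}}^{-1}(U_{u_{\text{next}}})\bigr) \geq r$: since $V'_{\bullet}$ is a sub-representation and $U_{u_{\text{next}}} \subset V'_{u_{\text{next}}}$ by the strengthened inductive hypothesis, this reduces to a local dimension count on the single cycle $\vec\Delta$, which can be handled using the explicit description of linked Grassmannians associated to simplices afforded by Theorem~\ref{thm:open_cover1} together with the equality $V_v = V'_v$ for $v \in I \cap V(\Delta)$.
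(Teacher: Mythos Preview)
Your opening observation that $(V'_u)_u$ is itself a sub-representation of $M_\Gamma$ with $V'_v=V_v$ for $v\in I$ is exactly right, and it is also the paper's starting point. From there, however, the paper takes a much shorter route that avoids your induction on $|\Gamma\setminus I|$ entirely: it simply shrinks the spaces $V'_v$ one at a time. Whenever some $V'_{u_2}$ has dimension $>r$ and an adjacent $u_1\in I_{u_2}$ already has $\dim V'_{u_1}=r$, one replaces $V'_{u_2}$ by any $r$-dimensional subspace containing $W_{u_2}:=\bigoplus_{w\in I_{u_2}} f_{w,u_2}(V'_w)$. Local weak independence gives an injection $\bigoplus_{w\in I_{u_2}\setminus\{u_1\}} f_{w,u_2}(V'_w)\hookrightarrow \ker f_{u_1,u_2}\cap V'_{u_1}$, whence $\dim W_{u_2}\le r$; the outgoing constraints are automatic because the targets have not yet been shrunk. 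Induction on $\sum_v\dim V'_v$ finishes. This sidesteps both of the difficulties your approach runs into.

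Your proposal has two genuine gaps. First, the ``delicate point'' $\dim\bigl(V'_u\cap f_{u,u_{\text{next}}}^{-1}(U_{u_{\text{next}}})\bigr)\ge r$ is not addressed by Theorem~\ref{thm:open_cover1}; that theorem describes Schubert cells in local models and says nothing about this preimage bound. The inequality is in fact true when $|I_u|=1$: since every $f_{u,v}$ with $v\in I$ factors through $u_{\text{next}}$, one has $V'_u=f_{u,u_{\text{next}}}^{-1}(V'_{u_{\text{next}}})$, so the intersection is just $f_{u,u_{\text{next}}}^{-1}(U_{u_{\text{next}}})$, and then
\[
\dim f_{u,u_{\text{next}}}^{-1}(U_{u_{\text{next}}})=\dim\ker f_{u,u_{\text{next}}}+r-\dim f_{u_{\text{next}},u}(U_{u_{\text{next}}})\ge r
\]
because $\ker f_{u,u_{\text{next}}}=\im f_{u_{\text{next}},u}$. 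But this argument, not Theorem~\ref{thm:open_cover1}, is what you need.

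Second, your choice of ``leaf vertex $u\in\Gamma\setminus I$ lying in a leaf cycle other than the attachment vertex'' need not exist. Take two triangles sharing a single vertex $u$ and set $I=\Gamma\setminus\{u\}$: the only vertex outside $I$ is the attachment vertex itself, and $\Gamma\setminus\{u\}$ is not even convex (the convex hull of vertices from the two triangles passes through $u$), so Proposition~\ref{prop:weakly indep 2}(2) does not apply and your induction cannot proceed. The paper's shrinking argument handles such $u$ without any case distinction, which is the main advantage of its approach.
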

\begin{proof}
Note that $(V'_v)_v$ is a sub-representation of $M_\Gamma$ such that $\dim V'_v\geq r$ for each $v$ and $V'_v=V_v$ for all $v\in I$. We proceed by adjusting $(V'_v)_v$ until $\dim V'_v=r$ for all $v\in\Gamma$. In any non-trivial case, there is a pair of adjacent vertices $u_1$ and $u_2$ such that $\dim V'_{u_1}=r$ and $\dim V'_{u_2}>r$. Moreover, we may assume $u_1\in I_{u_2}$. Denote $W_{u_2}=\bigoplus_{u\in I_{u_2}}f_{u,u_2}(V'_u)$. This is the vector space generated by the images from all $V'_u$ for $u\neq u_2$.

We claim that $\dim W_{u_2}\leq r$. Indeed, by locally weak independence at $u_2$, we have an injection
$$W'_{u_2}:=\bigoplus_{u\in I_{u_2}\backslash \{u_1\}}f_{u,u_2}(V'_u)\xhookrightarrow{f_{u_2,u_1}} \ker f_{u_1,u_2}\cap V'_{u_1}\subset V'_{u_1}.$$
It follows that 
$$\dim W_{u_2}= \dim W'_{u_2}+\dim f_{u_1,u_2}(V'_{u_1})\leq \dim (\ker f_{u_1,u_2}\cap V'_{u_1})+ \dim f_{u_1,u_2}(V'_{u_1}) =r.$$

Now we can replace $V'_{u_2}$ with any $r$-dimensional subspace that contains $W_{u_2}$ while keeping the other $(V'_v)_{v\neq u_2}$. This gives a sub-representation $(U'_v)$ of $M_\Gamma$ such that $\dim U'_v\geq r$ and $\sum_v\dim U'_v<\sum_v\dim V'_v$. Since $u_2\not\in I$, we still have $U'_{v}=V_v$ for all $v\in I$. Hence, by induction on $\sum_v\dim V'_v$, we are done.
\end{proof}

\subsection{Decomposition}\label{subsec:weakly indep decomp}
In this subsection, we prove that any sub-representation of $M_\Gamma$, where $\Gamma$ is locally weakly independent, can be decomposed as a direct sum of representations of $Q(\Gamma)$ that is generated by a single vector. A similar statement was proved for locally linearly independent configurations in \cite{he2023degenerations} by induction on $|\Gamma|$. In this paper, we give a straight construction of such decomposition. 
\begin{lem}\label{lem:decomposition generate} Suppose $(U_v)_v$ is a sub-representation of $M_\Gamma$ and we have a decomposition
$$U_v=V_v\oplus \sum_{u\neq v}f_{u,v}(U_u).$$
Then $(U_v)_v$ is generated by $(V_v)_v$.
\end{lem}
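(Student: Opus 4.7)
The plan is to show that the sub-representation $W=(W_v)_v$ of $M_\Gamma$ generated by $(V_v)_v$ coincides with $(U_v)_v$; I will argue via a Nakayama-type iteration that exploits the nilpotency of path compositions in $Q(\Gamma)$ guaranteed by Proposition~\ref{prop:ambient representation1}(3). In fact locally weak independence of $\Gamma$ is not actually used in what follows, so the same proof works for any convex configuration.

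The first step is to reformulate the hypothesis in terms of arrows of $Q(\Gamma)$. Using Proposition~\ref{prop:ambient representation1}(1), for any $u\neq v$ there exists a path $\ell$ in $Q(\Gamma)$ with $f_\ell=f_{u,v}$; letting $\vec e$ be the final arrow of $\ell$, one has $f_{u,v}(U_u)\subseteq f_{\vec e}(U_{s(\vec e)})$. Consequently,
\[\sum_{u\neq v}f_{u,v}(U_u)=\sum_{\vec e\in Q(\Gamma)_1,\;t(\vec e)=v}f_{\vec e}(U_{s(\vec e)})=:(JU)_v,\]
and the decomposition hypothesis rewrites as $U_v=V_v+(JU)_v$, which gives $U_v=W_v+(JU)_v$ since $V_v\subseteq W_v$.

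Next, define inductively $(J^{(0)}U)_v:=U_v$ and $(J^{(n+1)}U)_v:=\sum_{\vec e:\,t(\vec e)=v}f_{\vec e}((J^{(n)}U)_{s(\vec e)})$ for $n\geq 0$, and prove by induction on $n$ that $U_v=W_v+(J^{(n)}U)_v$ for every $v\in\Gamma$. The case $n=1$ was just established. For the inductive step, given any $z\in U_v$, apply the $n=1$ case to write $z=w+\sum_{\vec e}f_{\vec e}(y_{\vec e})$ with $w\in W_v$ and $y_{\vec e}\in U_{s(\vec e)}$; then apply the inductive hypothesis at each source vertex to split $y_{\vec e}=w_{\vec e}+y'_{\vec e}$ with $w_{\vec e}\in W_{s(\vec e)}$ and $y'_{\vec e}\in(J^{(n)}U)_{s(\vec e)}$. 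Pushing forward by $f_{\vec e}$, the first piece lands in $W_v$ (because $W$ is a sub-representation) and the second in $(J^{(n+1)}U)_v$ by definition, completing the induction.

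Finally, I would conclude via nilpotency. By Proposition~\ref{prop:ambient representation1}(3), every cycle in $Q(\Gamma)'$ composes to zero; hence any path with more than $|\Gamma|$ edges must revisit a vertex and therefore contains a closed sub-path, forcing the total composition to vanish. Consequently $(J^{(n)}U)_v=0$ for all $n>|\Gamma|$, and we obtain $U_v=W_v$ as desired. The main subtlety is the initial reformulation of the hypothesis in terms of arrows rather than arbitrary morphisms $f_{u,v}$, since only the arrows give rise to the iterated subspaces $(J^{(n)}U)_v$ whose eventual vanishing follows from the cycle-vanishing relations; once that reformulation is in hand, the remainder is a routine Nakayama-style iteration.
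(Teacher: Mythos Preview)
Your proof is correct and rests on the same key fact as the paper's---the nilpotency of path compositions from Proposition~\ref{prop:ambient representation1}(3)---though the paper organizes it as a quotient-plus-contradiction argument rather than your direct Nakayama-style iteration: it passes to the quotient $(U'_v)_v$ by the sub-representation generated by $(V_v)_v$, observes $U'_v=\sum_{u\neq v}\bar f_{u,v}(U'_u)$, and then, assuming some $U'_{v_1}\neq 0$, builds an infinite chain of nonzero compositions $\bar f_{v_2,v_1}\circ\cdots\circ\bar f_{v_i,v_{i-1}}(\epsilon_i)\neq 0$, impossible once the path revisits a vertex. Your preliminary reformulation via arrows of $Q(\Gamma)$ is a clean extra step the paper bypasses by working directly with all $f_{u,v}$, but the two arguments are otherwise equivalent.
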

\begin{proof}
Let $(U'_v)_v$ be the quotient of $(U_v)_v$ by the sub-representation generated by $(V_v)_v$. Then it suffices to show that $(U'_v)_v$ is trivial. Denote by $\bar f_{u,v}\colon U'_u\rightarrow U'_v$ the induced map from $f_{u,v}$. By construction, we have $U'_v=\sum_{u\in \Gamma\backslash \{v\}} \bar f_{u,v}(U'_u)$. Suppose there is a  nonzero element $\ep_1\in V'_{v_1}$, then there exists $\ep_u\in U'_u$ such that $\ep_1=\sum_{u\in\Gamma\backslash \{v_1\}}\bar f_{u,v_1}(\ep_u)$. It follows that there is a $v_2\in \Gamma\backslash \{v_1\}$ and $\ep_2:=\ep_{v_2}\in U'_{v_2}$ such that $\bar f_{v_2,v_1}(\ep_2)\neq 0$. Replacing $\ep_2$ with $\ep_1$ and run the argument again we find $\ep_2=\sum_{u\in\Gamma\backslash \{v_2\}}\bar f_{u,v_2}(\ep'_u)$, hence
$0\neq\bar f_{v_2,v_1}(\ep_2)=\sum_{u\in\Gamma\backslash \{v_2\}}\bar f_{v_2,v_1}\circ\bar f_{u,v_2}(\ep'_u)$
and there is a $v_3\in \Gamma\backslash \{v_2\}$ and $\ep_3:=\ep'_{v_3}\in U'_{v_3}$ such that $\bar f_{v_2,v_1}\circ \bar f_{v_3,v_2}(\ep_3)\neq 0$. Inductively, we get an infinite sequence $\ep_1,\ep_2,\dotsc$ where $\ep_i\in U'_{v_i}$ and $v_i\neq v_{i-1}$ such that $\bar f_{v_2,v_1}\circ\cdots\circ \bar f_{v_i,v_{i-1}}(\ep_i)\neq 0$ for each $i$. This is impossible. 
\end{proof}

For $v\in \Gamma$, from Proposition~\ref{prop:weakly indep} we know that $I_v$ is naturally identified with the set of cycles in $Q(\Gamma)$ containing $v$. 
\begin{nt}
    For all $u\in I_v$, let $ \Delta_{v,u}$ be the
set of vertices of $Q(\Gamma)$ that lies in the unique cycle (or equivalently, maximal simplex of $\Gamma$) containing $u$ and $v$.
\end{nt}

\begin{lem}\label{lem:decompose independent} 
Suppose $\Gamma$ is locally weakly independent, and $(U_v)_v$ is a sub-representation of $M_\Gamma$. For each $v$, let $V_v$ be a subspace of $U_v$ such that $$V_v\cap \Big(\bigoplus_{u\in I_v}f_{u,v}(U_u)\Big)=0,$$
and for each $w\in  \Delta_{v,u}$, we have 
\begin{equation}\label{eq:decompose independent}\ker f_{v,w}|_{U_v}=\ker f_{v,w}|_{V_v}\oplus\ker f_{v,w}|_{f_{u,v}(U_u)}.\end{equation}
Then for each $v$, the spaces $(f_{u,v}(V_u))_{u\in\Gamma}$ are linearly independent. 
\end{lem}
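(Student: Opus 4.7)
The plan is to argue by strong induction on $|\Gamma|$, bootstrapping the single-simplex case (equivalently, $Q(\Gamma)$ a single cycle, by Proposition~\ref{prop:weakly indep}(3)) to the general tree-of-cycles setting guaranteed by Proposition~\ref{prop:weakly indep}(4). In both regimes the strategy is the same: starting from a hypothetical relation $\sum_{u\in\Gamma}f_{u,v}(\xi_u)=0$ with $\xi_u\in V_u$, use condition~(\textit{a}) to eliminate $\xi_v=0$, then peel off the remaining terms one at a time by iteratively applying equation~(\ref{eq:decompose independent}) along the relevant cycles.

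For the single-cycle base case, write the cycle as $v_0\to v_1\to\cdots\to v_{n-1}\to v_0$ and fix $v=v_0$. Since $f_{v_j,v_0}(\xi_j)\in f_{v_{n-1},v_0}(U_{v_{n-1}})$ for $j\neq 0$, condition~(\textit{a}) at $v_0$ forces $\xi_0=0$. Then iteratively for $k=n-1,n-2,\ldots,1$, factor
$$\sum_{j=1}^{k}f_{v_j,v_0}(\xi_j)=f_{v_k,v_0}\Bigl(\xi_k+\sum_{j<k}f_{v_j,v_k}(\xi_j)\Bigr)=0,$$
so that $\xi_k+\sum_{j<k}f_{v_j,v_k}(\xi_j)\in \ker f_{v_k,v_0}|_{U_{v_k}}$. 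Condition~(\textit{a}) at $v_k$ exhibits this element as a direct-sum decomposition in $V_{v_k}\oplus f_{v_{k-1},v_k}(U_{v_{k-1}})$, while equation~(\ref{eq:decompose independent}) at $v_k$ with $u=v_{k-1}$ and $w=v_0$ (which lies in $\Delta_{v_k,v_{k-1}}=\Gamma$) splits $\ker f_{v_k,v_0}|_{U_{v_k}}$ compatibly. Uniqueness of the decomposition yields $f_{v_k,v_0}(\xi_k)=0$ together with the smaller residual $\sum_{j=1}^{k-1}f_{v_j,v_0}(\xi_j)=0$, fueling the next iteration.

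For the general inductive step, apply condition~(\textit{a}) at $v$ to first extract $\xi_v=0$. Then, for each $u_0\in I_v$, local weak independence at $v$ isolates the contribution $\eta_{u_0}:=\sum_{u\in A_{u_0\to v}}f_{u,u_0}(\xi_u)\in\ker f_{u_0,v}|_{U_{u_0}}$ (Notation~\ref{nt:Ae}), using the factorization $f_{u,v}=f_{u_0,v}\circ f_{u,u_0}$ valid for $u\in A_{u_0\to v}$. Decompose $\eta_{u_0}$ in two compatible ways: first along the branches $(f_{w,u_0}(U_w))_{w\in I_{u_0}}$ using local weak independence and condition~(\textit{a}) at $u_0$, giving $\eta_{u_0}=\xi_{u_0}+\sum_{w\in I_{u_0}}\zeta_w$ with $\zeta_w\in f_{w,u_0}(U_w)$; second via equation~(\ref{eq:decompose independent}) at $u_0$ with $u=w_0$ (the predecessor of $u_0$ in the cycle $\Delta_{v,u_0}$) and $w=v$, which splits $\ker f_{u_0,v}|_{U_{u_0}}$ as a direct sum of pieces from $V_{u_0}$ and $f_{w_0,u_0}(U_{w_0})$ only. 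Matching the two decompositions against the uniqueness of $V_{u_0}\oplus\bigoplus_{w\in I_{u_0}}f_{w,u_0}(U_w)$ forces $f_{u_0,v}(\xi_{u_0})=0$ and $\zeta_w=0$ for each $w\neq w_0$, leaving a residual $\zeta_{w_0}\in\ker f_{u_0,v}|_{f_{w_0,u_0}(U_{w_0})}$. Each vanishing $\zeta_w=\sum_{u\in A_{w\to u_0}}f_{u,u_0}(\xi_u)=0$ is a linear relation inside the strictly smaller locally weakly independent sub-configuration $A_{w\to u_0}\cup\{u_0\}$ (Proposition~\ref{prop:weakly indep 2}(2)), handled by the induction hypothesis at $u_0$.

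The main obstacle is the residual $w_0$-contribution, since $\zeta_{w_0}\in\ker f_{u_0,v}$ translates into a smaller relation $\sum_u f_{u,v}(\xi_u)=0$ at $v$ over the subset $A_{u_0\to v}\cap A_{w_0\to u_0}$, which need not be strictly smaller than $\Gamma$ (it may contain all of $\Delta_{v,u_0}\setminus\{u_0,v\}$ along with all side branches off these intermediate vertices), precluding a direct $|\Gamma|$-induction. The resolution is to iterate the decomposition argument at $w_0$, walking around the cycle $\Delta_{v,u_0}$ one vertex at a time exactly as in the base case: at each stage the target $v$ remains in the current active cycle, so equation~(\ref{eq:decompose independent}) stays available with $w=v$, and after at most $|\Delta_{v,u_0}|-1$ such iterations the cycle is exhausted, the accumulated residual sub-sum becomes empty, and every term $f_{u,v}(\xi_u)$ with $u$ in the original residual has been shown to vanish.
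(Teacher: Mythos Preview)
Your argument is correct and uses the same basic ingredients as the paper's proof (induction on $|\Gamma|$, condition~(\textit{a}), equation~(\ref{eq:decompose independent}), and the tree-of-cycles structure), but the organization is genuinely different. The paper splits into two cases according to $|I_v|$: when $|I_v|\ge 2$ it immediately invokes the inductive hypothesis on each $A_{\vec e_{w,v}}\cup\{v\}$; when $|I_v|=1$ it first uses the inductive hypothesis on each $A_{\cev e_u}$ (for $u$ in the unique cycle $\Delta$ through $v$) to \emph{absorb} all side branches into enlarged subspaces $\widetilde V_u = V_u\oplus\bigoplus_{w\in I_u\setminus\{s(\vec e_u)\}} f_{w,u}(U_w)$, checks that (\ref{eq:decompose independent}) is preserved, and thereby reduces to the pure simplex $\Gamma=\Delta$, which is then finished by a single application of (\ref{eq:decompose independent}) at the vertex $z\in J$ closest to $v$ along the cycle. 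You, by contrast, treat all cases uniformly: you walk each cycle $\Delta_{v,u_0}$ one vertex at a time, at each $c_i$ applying (\ref{eq:decompose independent}) with $w=v$ to split off $f_{c_i,v}(\xi_{c_i})=0$ and to force $\zeta_w=0$ for the side directions $w\in I_{c_i}\setminus\{c_{i-1}\}$, dispatching each such side-branch relation to the inductive hypothesis on $A_{w\to c_i}\cup\{c_i\}$ as it arises. The paper's route is a bit shorter (one absorption step plus one contradiction), while yours is more explicit and avoids the check that the enlarged $\widetilde V_u$ still satisfy (\ref{eq:decompose independent}).

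One minor wording issue: the set $A_{u_0\to v}\cap A_{w_0\to u_0}$ \emph{is} strictly smaller than $\Gamma$ (it omits at least $v$ and $u_0$). What you presumably mean, and what is true, is that this set together with $v$ is not of the form $A_{\vec e}\cup\{t(\vec e)\}$, so Proposition~\ref{prop:weakly indep 2}(2) does not directly furnish a locally weakly independent sub-configuration to which the inductive hypothesis applies; that is why the cycle-walk is needed rather than a single inductive call.
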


\begin{proof} We proceed by induction on $|\Gamma|$. The base case $|\Gamma|=1$ is trivial. For each $u\in I_v$, let $\vec e_{u,v}$ be the arrow from $u$ to $v$. 
If $I_v$ contains at least two vertices, then by the locally weak independence of $\Gamma$ at $v$, the linear independence of $(f_{u,v}(V_u))_{u\in\Gamma}$ follows from the linear independence of 
$(f_{u,v}(V_u))_{u\in A_{\vec e_{w,v}}}$ for all $w\in I_v$. According to Proposition~\ref{prop:weakly indep 2} (2), each $A_{\vec e_{w,v}}\cup\{v\}$ is locally linearly independent. Hence the conclusion follows from the inductive hypothesis on $A_{\vec e_{w,v}}\cup\{v\}$. 

We may now assume that $I_v$ consists of a single vertex. Hence there is a unique cycle $\vec\Delta$ in $Q(\Gamma)$ that contains $v$. For each $u\in \Delta$, let $\vec e_u$ be the arrow in $\vec \Delta$ with target $t(\vec e_u)=u$. Then $\Gamma=\bigsqcup_{u\in\Delta}A_{\cev e_{u}}$, and $A_{\cev e_v}=\{v\}.$ Moreover, the source $s(\vec e_u)$ is contained in  $I_u$. See Figure~\ref{fig:linearindeplemma}.


  \tikzset{every picture/.style={line width=0.5pt}} 
\begin{figure}[ht]
\begin{tikzpicture}[x=0.45pt,y=0.45pt,yscale=-1,xscale=1]
\import{./}{figlinearindeplemma.tex}
\end{tikzpicture}
\caption{A vertex $v$ of $\Gamma$ such that $I_v$ is a single vertex, as in the proof of Lemma~\ref{lem:decompose independent}.}\label{fig:linearindeplemma}
\end{figure}

Suppose there is a $J\subset\Gamma$ and, for each $w\in J$, an $\ep_w\in V_w$ such that $f_{w,v}(\ep_w)\neq 0$, and the vectors $(f_{w,v}(\ep_w))_{w\in J}$ are linearly dependent. Up to scaling, we can assume $\sum_{w\in 
J} f_{w,v}(\ep_w)=0$.  By construction, we have $v\not\in J$. 
For each $u\in\Delta\backslash\{v\}$, by the inductive hypothesis on each $A_{\cev e_u}$, we know that $\sum_{w\in J\cap A_{\cev e_u}} f_{w,u}(\ep_w)\neq 0$ whenever $J\cap A_{\cev e_u}\neq \emptyset$. Thus, replacing $\ep_u$ with $\sum_{w\in J\cap A_{\cev e_u}} f_{w,u}(\ep_w)$ and $V_u$ with 
$$V_u\oplus \bigoplus_{w\in I_u\backslash\{s(\vec e_u)\}}f_{w,u}(U_w)=V_u\oplus \sum_{w\in A_{\cev e_u}\backslash \{u\}}f_{w,u}(U_w),$$ we may assume that $J\subset \Delta$, and hence reduce to the case $\Gamma=\Delta$. One checks that equation (\ref{eq:decompose independent}) is preserved. 

Let $z\in J$ be the vertex such that the path in $\vec\Delta$ from $z$ to $v$ does not contain other vertices in $J$. 
We then have $\ep_z+\sum_{u\in J\backslash\{z\}} f_{u,z}(\ep_u)\in \ker f_{z,v}|_{U_z}$. Denote $z'=s(\vec e_z)$.  According to equation (\ref{eq:decompose independent}), we have
\begin{equation*}\ker f_{z,v}|_{U_z}=\ker f_{z,v}|_{V_z}\oplus\ker f_{z,v}|_{f_{z',z}(U_{z'})}.\end{equation*}
Since $\ep_z\in V_z$ and $\sum_{u\in J\backslash\{z\}} f_{u,z}(\ep_u)\in f_{z',z}(U_{z'})$,
it follows that $\ep_z\in \ker f_{z,v}$. This contradicts the assumption that $f_{z,v}(\ep_z)\neq 0$ at the beginning. 
\end{proof}

\begin{thm}\label{thm:decomposition}
Suppose $\Gamma$ is locally weakly independent. Then each sub-representation of $M_\Gamma$ is decomposible into a direct sum of sub-representations, where each summand is generated by a single vector, hence has dimension at most $\mathbf 1$. Moreover, $M_\Gamma$ is projective, hence $\Gamma$ is contained in an apartment.
\end{thm}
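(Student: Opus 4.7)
The plan is to derive the theorem from the combination of Lemmas~\ref{lem:decomposition generate} and~\ref{lem:decompose independent}. Given a subrepresentation $(U_v)_v$ of $M_\Gamma$, everything reduces to producing subspaces $V_v \subset U_v$ satisfying the hypotheses of both lemmas. I would take $V_v$ to be a complement of $\bigoplus_{u \in I_v} f_{u,v}(U_u)$ inside $U_v$; by Proposition~\ref{prop:weakly indep}(1) and local weak independence at $v$, this is the same as a complement of $\sum_{u \neq v} f_{u,v}(U_u)$, which is the hypothesis of Lemma~\ref{lem:decomposition generate} and already yields that $(V_v)_v$ generates $(U_v)_v$.

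The hard part will be enforcing the kernel decomposition~\eqref{eq:decompose independent}. For each $v \in \Gamma$, $u \in I_v$, and $w \in \Delta_{v,u}$, Proposition~\ref{prop:weakly indep 2}(1) identifies $\Conv(u', w) = \{u', v, w\}$ for every $u' \in I_v \setminus \{u\}$, and Lemma~\ref{lem:convex hull of two points}(3) then gives $\ker f_{v,w} \cap \im f_{u',v} = 0$; this forces $\ker f_{v,w}|_{U_v} \subset V_v \oplus f_{u,v}(U_u)$ for any complement $V_v$. The residual content of~\eqref{eq:decompose independent} reduces to the transversality $f_{v,w}(V_v) \cap f_{u,w}(U_u) = 0$ in $\bar L_w$, which is a genuine constraint on $V_v$. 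I would secure this by an inductive construction along the tree-of-cycles structure of $Q(\Gamma)$ from Proposition~\ref{prop:weakly indep}(3) and (4): peeling off leaf cycles and propagating inward, modifying $V_v$ within its coset modulo $\bigoplus_{u\in I_v}f_{u,v}(U_u)$ to meet the finitely many transversality conditions. With the $V_v$'s in place, Lemma~\ref{lem:decompose independent} yields the linear independence of $(f_{u,v}(V_u))_{u \in \Gamma}$ in each $\bar L_v$, and combined with the generation from Lemma~\ref{lem:decomposition generate} this produces the direct sum decomposition $(U_v)_v = \bigoplus_{v, \epsilon} M^{(v,\epsilon)}$ over basis elements $\epsilon$ of each $V_v$, each summand generated by one vector and therefore of dimension at most $\mathbf 1$.

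Applying the decomposition to $M_\Gamma$ itself, the complements $V_v \subset \bar L_v$ avoid every $\ker f_{v,w} = \im f_{w,v}$ for $w$ adjacent to $v$: if $w \in I_v$ this is immediate from the definition of $V_v$, while for $w$ adjacent but not in $I_v$ the factorization of $f_{w,v}$ through some element of $I_v$ (Definition~\ref{defn:ess_lli}(1)) gives $\im f_{w,v} \subset \bigoplus_{u \in I_v} \im f_{u,v}$. A short induction along the unique non-repeating path from $v$ to any $w \in \Gamma$ (Proposition~\ref{prop:weakly indep}(2)), using weak independence at each intermediate vertex together with Lemma~\ref{lem:convex hull of two points}(2), extends this to $V_v \cap \ker f_{v,w} = 0$ for all $w$. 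Hence every cyclic summand $M^{(v,\epsilon)}$ of $M_\Gamma$ has dimension exactly $\mathbf 1$ and is isomorphic to the indecomposable projective $P_v$ of $Q(\Gamma)$ modulo the cycle relations of Proposition~\ref{prop:ambient representation1}(3). In particular $M_\Gamma \cong \bigoplus_v P_v^{\dim V_v}$ is projective, and Proposition~\ref{prop:ambient representation}(2) then yields that $\Gamma$ lies in an apartment.
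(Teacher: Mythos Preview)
Your high-level strategy matches the paper's: combine Lemma~\ref{lem:decomposition generate} and Lemma~\ref{lem:decompose independent} applied to well-chosen complements $V_v$. There are, however, two genuine gaps.

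\textbf{The main gap.} From the two lemmas you only obtain $(U_v)_v=\bigoplus_{v}\mathrm{span}(V_v)$, not the finer decomposition $\bigoplus_{v,\epsilon}M^{(v,\epsilon)}$ over a basis of each $V_v$. Picking an arbitrary basis of $V_v$ does \emph{not} split $\mathrm{span}(V_v)$: if $\epsilon_1,\epsilon_2\in V_v$ satisfy $\epsilon_1+\epsilon_2\in\ker f_{v,w}$ while neither lies in that kernel, then $f_{v,w}(\epsilon_1)$ and $f_{v,w}(\epsilon_2)$ span the same line, so $\mathrm{span}(\epsilon_1)$ and $\mathrm{span}(\epsilon_2)$ meet nontrivially at $w$. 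The paper supplies exactly this missing step: it peels off one vector $\epsilon\in V_v$ at a time, choosing the complement $V'_v$ so that $V'_v\supset V_v\cap\big((\bigoplus_{u\in I_v\setminus\{z\}}\ker f_{v,u})\oplus\ker f_{v,v_{m-1}}\big)$, and checks that this forces $\mathrm{span}(V_v)=\mathrm{span}(V'_v)\oplus\mathrm{span}(\epsilon)$. You need an analogous argument; as written, your passage from $\bigoplus_v\mathrm{span}(V_v)$ to $\bigoplus_{v,\epsilon}M^{(v,\epsilon)}$ is unjustified.

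\textbf{A secondary gap.} Your plan for enforcing the kernel condition~\eqref{eq:decompose independent} (``peel off leaf cycles, modify $V_v$ within its coset'') is not fleshed out, and it is not clear it handles the simultaneous constraints at a vertex $v$ lying in several cycles: for each $u\in I_v$ one needs~\eqref{eq:decompose independent} for \emph{every} $w\in\Delta_{v,u}$, and these constraints interact. The paper avoids any global induction here by a purely local construction at each $v$: for each $u\in I_v$ it takes the two nested filtrations $A_i=\ker f_{v,v_i}|_{f_{u,v}(U_u)}$ and $B_i=\ker f_{v,v_i}|_{U_v}$ along the cycle $\vec\Delta_{v,u}$, chooses complements $C_i$ with $B_i=(A_i+B_{i-1})\oplus C_i$, sets $U_{v,u}=\bigoplus_iC_i$, and then $V_v=U'_v\oplus\bigoplus_{u\in I_v}U_{v,u}$. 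This yields~\eqref{eq:decompose independent} for every $u$ and every $w$ at once. Your inductive sketch may ultimately be salvageable, but as stated it does not deliver this.

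Your argument for the projectivity of $M_\Gamma$ is fine in spirit and close to what the paper (tersely) intends: when $U_u=\overline L_u$ one has $f_{u,v}(U_u)=\ker f_{v,u}$, so $V_v$ meets $\bigoplus_{u\in I_v}\ker f_{v,u}$ trivially, and then for any $w$ the kernel $\ker f_{v,w}$ is contained in $\ker f_{v,z}$ for the $z\in I_v$ on the path to $w$, giving $V_v\cap\ker f_{v,w}=0$.
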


It follows directly from the theorem that the irreducible sub-representations of $M_\Gamma$ are those generated by a single vector.

\begin{rem}\label{rem:irred rep}
Let us first describe the sub-representations $M$ of $M_\Gamma$ generated by a single vector. Suppose $M$ is generated by $\epsilon\in \overline L_v$. Then either $M$ is projective, or there exists a $w\in\Gamma$ such that $f_{v,w}(\epsilon)=0$. 
In the second case, there is a unique simplex $\Delta_{v,z}$, where $z\in I_v$, that contains a vertex $v'\neq v$ in $\mathrm{Conv}(v,w)$.
According to Lemma~\ref{lem:convex hull of two points}, we have $f_{v,v'}(\epsilon)=f_{v,w}(\epsilon)=0$. Write $\vec\Delta_{v,z}$ as $v=v_0,v_1,...,z=v_n,v_0$. Then there exists $1\leq m\le n$ such that $f_{v,v_{m-1}}\neq 0$ and $f_{v,v_m}=0$. See Figure~\ref{fig:repexample}.
\tikzset{every picture/.style={line width=0.5pt}} 
\begin{figure}[ht]
\begin{tikzpicture}[x=0.45pt,y=0.45pt,yscale=-1,xscale=1]
\import{./}{figrepexample.tex}
\end{tikzpicture}
\caption{A sub-representation of $M_\Gamma$ generated by $s\in \overline L_v$. Its support consists of vertices represented by solid dots.}\label{fig:repexample}
\end{figure}
For $1\leq i\leq n$ let $\vec e_i$ be the arrow in the $\vec\Delta_{v,z}$ with target $v_i$. We again have $\Gamma=\bigsqcup_{0\leq i\leq n}A_{\cev e_i}$. For each $1\leq i\leq n$,  the convex hull of any $u\in A_{\cev e_i}$ and $v$ contains $v_i$. If $1\leq i\leq m-1$, then $f_{v,v_i}(\epsilon)\neq 0$ implies $f_{v,u}(\epsilon)\neq 0$; if $m\leq i\leq n$ then $f_{v,u}=0$. On the other hand, if $u\in A_{\cev e_0}$, then $\Conv(u,v)$ contains some $u'\in \Delta_{v,z'}$, where $z'\in I_v\backslash \{z\}$. Since $f_{v,z}(\epsilon)=0$, by the locally weak independence at $v$, we have $f_{v,u'}(\epsilon)\neq 0$, hence $f_{v,u}(\epsilon)\neq 0$. As a result, the support of $M$ is $\bigsqcup_{0\leq i\leq m-1}A_{\cev e_i}$, which only depends on the support of $M$ in $\Delta_{v,z}$.
\end{rem}

\begin{proof}[Proof of Theorem~\ref{thm:decomposition}]
Let $(U_v)_v$ be a sub-representation of $M_\Gamma$. We claim that $U_v$ can be decomposed as 
$$U_v=V_v\oplus \bigoplus_{u\in I_v}f_{u,v}(U_u),$$
such that 
for each $w\in  \Delta_{v,u}$, we have 
\begin{equation}\label{eq:thmdecompose}
    \ker f_{v,w}|_{U_v}=\ker f_{v,w}|_{V_v}\oplus\ker f_{v,w}|_{f_{u,v}(U_u)}
\end{equation}
Indeed, let us write the cycle of $\Delta_{v,u}$ as $v,v_1,\dotsc,v_n=u,v$. We then have two nested sequences
$$A_1:=\ker f_{v,v_1}|_{f_{u,v}(U_u)}\subset \cdots \subset A_n:=\ker f_{v,v_n}|_{f_{u,v}(U_u)}=f_{u,v}(U_u)$$
and $$B_0:=\{0\}\subset B_1:=\ker f_{v,v_1}|_{U_v}\subset \cdots \subset B_n:=\ker f_{v,v_n}|_{U_v}=\ker f_{v,u}|_{U_v}$$
such that $A_i\subset B_i$ and $A_j\cap B_i=A_i$ for each $j>i$. Take $C_i$ such that $B_i=(A_i+B_{i-1})\oplus C_i$ for $1\leq i\leq n$, 
and denote $U_{v,u}=\oplus_i C_i$. One checks directly by induction on $i$ that $B_i=A_i\oplus (U_{v,u}\cap B_i)$ for each $i$, which is close to what we need for (\ref{eq:thmdecompose}). In particular, taking $i=n$ gives $\ker f_{v,u}|_{U_v}=U_{v,u}\oplus f_{u,v}(U_u)$. 
By locally weak independence of $\Gamma$, $U_v$ can be written as 
$$U_v=U'_v\oplus \bigoplus_{u\in I_v}\Big(U_{v,u}\oplus f_{u,v}(U_u)\Big).$$
We then can take $V_v=U'_v\oplus\big(\bigoplus_{u\in I_v}U_{v,u}\big)$, which satisfies (\ref{eq:thmdecompose}) for all $u\in I_v$ and $w\in \Delta_{v,u}$.

Let $\mathrm{span}(V_v)$ denote the representation generated by $V_v$. 
By Lemma~\ref{lem:decomposition generate} and Lemma~\ref{lem:decompose independent}, $(U_v)_v=\bigoplus\limits_v\mathrm{span}(V_v).$ 
It now remains to show that each $\mathrm{span}(V_v)$ can be decomposed as sub-representations generated by a single vector. If $\mathrm{span}(V_v)$ is projective, then we are done. 
Hence we can assume that there is a $w\in\Gamma$ and $\epsilon\in V_v$ such that $f_{v,w}(\epsilon)=0$. 
By Remark~\ref{rem:irred rep}, we may assume $w=v_m\in \Delta_{v,z}$ for some $z\in I_v$ as in Figure~\ref{fig:repexample}. In particular, $f_{v,v_{m-1}}(\epsilon)\neq 0$. 
We then decompose $V_v$ as $V'_v\oplus \langle \epsilon\rangle$ such that $V'_v$ contains $V_v\cap \Big(\big(\bigoplus_{u\in I_v\backslash z}\ker f_{v,u}\big)\oplus\ker f_{v,v_{m-1}}\Big)$. Since $\epsilon\in \ker f_{v,v_m}\backslash\ker f_{v,v_{m-1}}$, this is possible by  the locally weak independence at $v$. 
It is easy to check that $\mathrm{span}(V_v)=\mathrm{span}(V'_v)\oplus\mathrm{span}(\epsilon)$, and we can argue inductively that $V_v$ has the desired decomposition. 
Moreover, if $(U_v)_v=M_\Gamma$ then $V_v$ is projective, hence so is $M_\Gamma$, and $\Gamma$ is in an apartment by Proposition~\ref{prop:ambient representation} (2).
\end{proof}

\subsection{The stratification of the special fiber}\label{subsec:stratification}
Let $\Gamma=\{[L_v]\}_v$ be a convex lattice configuration and $\underline x=(x_v)_v$ a dimension vector of $Q(\Gamma)$-representations. In \cite{he2023degenerations} we showed for locally linearly independent configurations that the rank-vector stratification is the same as the quiver stratification of $\Gr(\underline x,\Gamma)$, and that the topological order between quiver strata agrees with the rank-vector order. In this subsection we generalize the conclusion to the locally weakly independent case. Recall from Lemma~\ref{lem:bruhat order and rank vector} that we already know that the Bruhat stratification coincides with the rank-vector stratification for arbitrary convex lattice configurations.

\begin{thm}\label{thm:stratification}Suppose $\Gamma$ is locally weakly independent. Let $[M],[M']\in\Gr({\underline x},M_\Gamma)$ and $\Phi$ be the rank vector as in Notation~\ref{nt:rank vector}. Let $\mathcal S_{M}$ be the quiver stratum as in Notation~\ref{nt:quiver grass}.
\begin{enumerate}
    \item   $\Phi(M)=\Phi(M')$ if and only if $M\simeq M'$;
    \item  $\mathcal S_M\prec_{\mathrm{top}} \mathcal S_{M'}$ if and only if $\Phi(M)\leq \Phi(M')$. 
\end{enumerate}
In particular, the quiver stratification of $\Gr(\mathbf r,M_\Gamma)$ coincides with the Bruhat and rank-vector stratifications, and the topological order is equivalent to the rank-vector order.
\end{thm}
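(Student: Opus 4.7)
The plan is to use the decomposition Theorem~\ref{thm:decomposition} to reduce both statements to a combinatorial analysis of indecomposable summands. By that theorem, any $M=(V_v)_v\in \Gr(\underline x,M_\Gamma)$ decomposes as $M=\bigoplus_i N_i$, where each $N_i$ is generated by a single vector. By Remark~\ref{rem:irred rep}, each such $N_i$ is determined up to isomorphism by its support $S_i\subset\Gamma$, which is a specific type of connected sub-quiver (either the whole of $\Gamma$ in the projective case, or a union of the form $\bigsqcup_{0\le k\le m-1}A_{\cev e_k}$ obtained by truncating a single cycle at some vertex). Denote by $\mathscr I(\Gamma)$ this set of admissible supports, and for each $S\in\mathscr I(\Gamma)$ let $N_S$ be the (unique up to isomorphism) indecomposable with support $S$. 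Its rank-vector entries are $\Phi_{u,v}(N_S)=1$ if $u,v\in S$ and the unique non-repeating path from $u$ to $v$ in $Q(\Gamma)$ (Proposition~\ref{prop:weakly indep}(2)) lies entirely in $S$, and $\Phi_{u,v}(N_S)=0$ otherwise.

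For part (1), I would first show that the isomorphism class of $M$ is determined by the multiplicities $(m_S(M))_{S\in\mathscr I(\Gamma)}$ (this is essentially the Krull--Schmidt uniqueness applied to our explicit classification). Then, since $\Phi(M)=\sum_S m_S(M)\,\Phi(N_S)$, it suffices to show that the linear map $(m_S)_S\mapsto \sum_S m_S\Phi(N_S)$ is injective on $\mathbb Z^{\mathscr I(\Gamma)}_{\ge 0}$. I would do this by peeling supports off in decreasing order of size: for a maximal $S$ with $m_S(M)>0$, isolate $m_S(M)$ by looking at $\Phi_{u,v}$ for a pair $(u,v)\in S\times S$ chosen so that no strictly larger $S'\in\mathscr I(\Gamma)$ contains both $u$ and $v$ together with the path between them. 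Such a pair exists because the admissible supports form a sufficiently rigid combinatorial family (a maximal support is distinguished either by containing a full cycle or by reaching a specific extremal vertex), and the resulting M\"obius-type inversion recovers all multiplicities.

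For part (2), the implication $\mathcal S_M\preceq_{\mathrm{top}}\mathcal S_{M'}\Rightarrow \Phi(M)\le\Phi(M')$ is standard upper semicontinuity of ranks of maps between locally free sheaves. For the converse, my strategy is to reduce to ``elementary'' degenerations. Using part (1), the rank-vector inequality translates into a combinatorial inequality between the multiplicity tuples $(m_S(M'))_S$ and $(m_S(M))_S$, and I would show that such inequalities are generated by elementary moves of the form
\[
N_{S_1}\oplus N_{S_2}\;\rightsquigarrow\;N_{S_1\cap S_2}\oplus N_{S_1\cup S_2},
\]
where $S_1\cap S_2$ and $S_1\cup S_2$ both lie in $\mathscr I(\Gamma)$ (note that the dimension vector is preserved since $\mathbf 1_{S_1}+\mathbf 1_{S_2}=\mathbf 1_{S_1\cap S_2}+\mathbf 1_{S_1\cup S_2}$). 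Each such move is realised by a one-parameter family inside $\Gr(\underline x,M_\Gamma)$: perturb a generator $\epsilon$ of $N_{S_1\cup S_2}$ by $t\cdot\epsilon'$ where $\epsilon'$ is an appropriate vector for $N_{S_1\cap S_2}$, so that for $t\neq 0$ the span defines $N_{S_1}\oplus N_{S_2}$ while at $t=0$ it specialises to $N_{S_1\cap S_2}\oplus N_{S_1\cup S_2}$.

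The main obstacle I expect is in part (2): identifying the correct combinatorial generating set of elementary moves and verifying geometric realisability in the presence of cycles of length $>2$, a phenomenon absent from the locally linearly independent case of \cite{he2023degenerations}. The bookkeeping between ``partial'' supports along a single long cycle and the corresponding rank-vector changes is the technical core, as one must carefully check at which vertex the truncation of the cycle occurs and ensure that $S_1\cap S_2$ and $S_1\cup S_2$ remain in $\mathscr I(\Gamma)$, possibly after splitting a non-admissible move into two admissible ones. Once parts (1) and (2) are established, the ``in particular'' clause follows by combining with Lemma~\ref{lem:bruhat order and rank vector}, which already identifies the Bruhat and rank-vector stratifications for arbitrary convex configurations in an apartment.
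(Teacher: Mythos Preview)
Your approach to part (1) is essentially the paper's: both use Theorem~\ref{thm:decomposition} to reduce to recovering multiplicities of indecomposables from the rank vector. The paper is slightly more concrete, giving closed formulas such as
\[
\alpha_k=\Phi_{v,v_{m-1}}(M)+\Phi_{z,v_m}(M)-\Phi_{v,v_m}(M)-\Phi_{z,v_{m-1}}(M)
\]
for the multiplicity of the summand supported on $v_0,\dots,v_{m-1}$ inside a cycle $\vec\Delta_{v,z}$, rather than appealing to an abstract M\"obius inversion; but your peeling argument would work once made precise.

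Part (2), however, contains a genuine error: your degeneration points the wrong way. Semicontinuity forces ranks to \emph{drop} at $t=0$, yet $N_{S_1\cap S_2}\oplus N_{S_1\cup S_2}$ has \emph{larger} rank vector than $N_{S_1}\oplus N_{S_2}$ (e.g.\ $\Phi_{u,v}$ for $u,v$ at the two extremes of $S_1\cup S_2$ is $1$ for the former and $0$ for the latter). Thus the family you describe---with $N_{S_1}\oplus N_{S_2}$ for $t\neq 0$ specializing to $N_{S_1\cap S_2}\oplus N_{S_1\cup S_2}$ at $t=0$---cannot exist. The correct move is the reverse: starting from $M$ (lower ranks) one must deform \emph{towards} $M'$ (higher ranks). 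Concretely, one perturbs a generator of a \emph{short} summand by a lift of the generator of an overlapping (or adjacent) summand, thereby lengthening its support.

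Even with the direction fixed, the real content of part (2) is not realizing a single move but showing that whenever $\Phi(M)<\Phi(M')$ one can choose a move whose resulting rank vector still satisfies $\Phi(M^t)\le\Phi(M')$. Your proposal pushes this into the unproven assertion that the rank-vector order is generated by $\cap/\cup$ moves, which is exactly the hard step (and, as you note, $S_1\cap S_2$, $S_1\cup S_2$ need not lie in $\mathscr I(\Gamma)$). The paper sidesteps a global classification of moves: given $\Phi(M)<\Phi(M')$, it locates a minimal gap $\Phi_{v,v_m}(M)<\Phi_{v,v_m}(M')$ inside a single cycle $\vec\Delta_{v,z}$, extracts specific summands $M_1$ (support $v_\lambda,\dots,v_{m-1}$) and $M_2$ (support $v_\mu,\dots,v_\delta$), and replaces the generator $\epsilon_1$ of $M_1$ by $\epsilon_1+t\epsilon_3$ where $f_{v_\lambda,v_\mu}(\epsilon_3)=\epsilon_2$. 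A separate combinatorial claim (proved by a double induction on the differences $\sigma_{i,j}=\Phi'_{i,j}-\Phi_{i,j}$) guarantees $\sigma_{i,j}>0$ on the whole rectangle $\lambda\le i\le 0$, $m\le j\le\delta$, which is precisely what is needed to keep $\Phi(M^t)\le\Phi(M')$. Iterating terminates by part (1). This is the argument you are missing.
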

\begin{proof}
According to Theorem~\ref{thm:decomposition}, we can decompose $M$ and $M'$ as $\bigoplus_k M_k^{\oplus\alpha_k}$ and $\bigoplus_k (M'_k)^{\oplus \beta_k}$ respectively, where both $M_k$ and $M'_k$ are sub-representations of $\Gamma$ generated by a single element. We also write $M=(U_v)_v$ and $M'=(U'_v)_v$.

(1) It suffices to show that each $\alpha_k$ is determined by the isomorphic type of $M_k$ and $\Phi(M)$. Suppose $M_k$ is generated by a vector $\epsilon\in U_v$. If $M_k$ is projective, then $$\alpha_k=x_v-\sum_{u\in I_v}\dim\ker f_{v,u}|_{U_v}=x_v-\sum_{u\in I_v}(x_v-\Phi_{v,u}(M)).$$
Now suppose $M_k$ is not projective. As in Remark~\ref{rem:irred rep}, we may assume that there is $z\in I_v$ and $w=v_m\in \Delta_{v,z}$ such that $f_{v,w}(\epsilon)=0$. Recall that we write $\vec\Delta_{v,z}$ as $v=v_0,v_1,...,z=v_n,v_0$, and $m$ is the number such that $f_{v,v_{m-1}}(\epsilon)\neq 0$ and $f_{v,v_m}(\epsilon)=0$. We then have 
$$\alpha_k=\Phi_{v,v_{m-1}}(M)+\Phi_{z,v_m}(M)-\Phi_{v,v_m}(M)-\Phi_{z,v_{m-1}}(M)$$
when $z\neq v_m$; and 
$$\alpha_k=\Phi_{v,v_{m-1}}(M)-\Phi_{v,v_m}(M)-\Phi_{z,v_{m-1}}(M)$$otherwise. 
This completes the proof.

(2)
The ``only if" part is obvious, we need to proof the ``if" part.
Note that for any $u,v\in\Gamma$, if $w\in\Conv(u,v)$, then $\Phi_{v,u}(\boldsymbol{\cdot})=\Phi_{v,w}(\boldsymbol\cdot)$ by Lemma~\ref{lem:convex hull of two points} (3). 
Suppose $\Phi(M)<\Phi(M')$. Then $\Phi_{v,w}(M)<\Phi_{v,w}(M')$ for some $v$ and $w$. After replacing $w$ with a vertex in $\Conv(v,w)$ we may assume $w=v_m\in \Delta_{v,z}$ for some $z\in I_v$. Again, we write $\vec\Delta_{v,z}$ as $v=v_0,v_1,...,z=v_n,v_0$ as in Remark~\ref{rem:irred rep}.
We also write $v_{i+n+1}=v_i$ for convenience. 
Moreover, we assume that $m$ is the smallest number such that $\Phi_{v_i,v_{i+m}}(M)<\Phi_{v_i,v_{i+m}}(M')$ for some $i$. We want to deform $M$ in a way that $\Phi(M)$ increases strictly but is still bounded above by $\Phi(M')$. This will complete the proof when the deformation process is terminated.


For all integers $i\leq j$ let $a_{i,j}$ (resp. $a'_{i,j}$) denote the number of copies $M_k$s (resp. $M'_k$s) whose restriction on $\Delta_{v,z}$ is generated by a vector in $U_{v_i}$ (resp. $U'_{v_i}$) and supported on the path $v_i,v_{i+1},\dotsc, v_{j-1},v_j$.  In particular, let $a_{i,j}=a'_{i,j}=0$ when $j-i\geq n+1$. To reduce notation, for $i\leq j$, denote $\Phi_{i,j}=\Phi_{v_i,v_j}(M)$ and $\Phi'_{i,j}=\Phi_{v_i,v_j}(M')$, and set $\Phi_{i,j}=\Phi'_{i,j}=0$ if $j-i\geq n+1$. It follows that
$$\Phi_{i,j}=\sum_{k\leq i}\sum_{l\geq j}a_{k,l}\mathrm{\ and\ }\Phi'_{i,j}=\sum_{k\leq i}\sum_{l\geq j}a'_{k,l}. $$
Therefore, we have
$$a_{i,j}=\Phi_{i,j}+\Phi_{i-1,j+1}-\Phi_{i,j+1}-\Phi_{i-1,j}\mathrm{\ and\ }a'_{i,j}=\Phi'_{i,j}+\Phi'_{i-1,j+1}-\Phi'_{i,j+1}-\Phi'_{i-1,j}.$$

By the assumption on $m$, we have $\Phi_{0,m-1}=\Phi'_{0,m-1}$ and 
$$\sum_{i\leq 0}a_{i,m-1}=\Phi_{0,m-1}-\Phi_{0,m}>\Phi'_{0,m-1}-\Phi'_{0,m}\geq 0.$$
Therefore, there exist some $\lambda\leq 0$ such that $a_{\lambda,m-1}>0$. Let $\lambda$ be the largest one. Then $a_{i,m-1}=0$ for all $\lambda<i\leq 0$. Similarly, $a_{1,\delta'}> 0$ for some $\delta'\geq m$. Let $\delta\geq m$ be the smallest such that there is some $\lambda<\mu\leq 1$ such that $a_{\mu,\delta}\neq 0$. See Figure~\ref{fig:order}. Denote $\sigma_{i,j}=\Phi'_{i,j}-\Phi_{i,j}\geq 0$.
\tikzset{every picture/.style={line width=0.75pt}} 
\begin{figure}[ht]
\begin{tikzpicture}[x=0.45pt,y=0.45pt,yscale=-1,xscale=1]
\import{./}{figorder.tex}
\end{tikzpicture}
\caption{Two direct summands of $M=\bigoplus_k M_k^{\oplus\alpha_k}$ as in the proof of Theorem~\ref{thm:stratification}, part (2).}\label{fig:order}
\end{figure}
\begin{claim}\label{cl:inequality}
We have $\sigma_{i,j}>0$ for all $\lambda \leq i\leq 0$ and $m\leq j\leq \delta$. In particular, $\delta\leq \lambda+n$.
\end{claim}
We postpone the proof of the claim to the end. For all $u\in\Gamma$ such that $\Conv(v,u)$ contains some $v_j$ with $m\leq j\leq\delta$, we have $\Phi_{v_i,u}(M')-\Phi_{v_i,u}(M)=\Phi_{v_i,v_j}(M')-\Phi_{v_i,v_j}(M)=\sigma_{i,j}>0$ for all $\lambda \leq i\leq 0$. Since $a_{\lambda,m-1}>0$ and $a_{\mu,\delta}>0$, in the decomposition $M=\bigoplus_k M_k^{\oplus\alpha_k}$, we may assume that the summand $M_1$ has support $v_\lambda,...,v_{m-1}$ in $\Delta_{v,z}$, and $M_1|_{\Delta_{v,z}}$ is generated by $\ep_1\in U_{v_\lambda}$. Respectively, $M_2|_{\Delta_{v,z}}$ is generated by $\ep_2\in U_{v_\mu}$, and supported on $v_\mu,v_{\mu+1},...,v_\delta$. Since $f_{v_\mu,v_\lambda}(\ep_2)=f_{v_{\delta+1},v_\lambda}(f_{v_\mu,v_{\delta+1}}(\ep_2))=0$, there is an $\ep_3\in \overline L_{v_\lambda}$ such that $f_{v_\lambda,v_\mu}(\ep_3)=\ep_2$. We then deform $M$ by 
taking a copy of $M_1$ in the decomposition $M=\bigoplus_k M_k^{\oplus\alpha_k}$ and replacing with the sub-representation of $M_\Gamma$ generated by $\ep_1+t\ep_3$ where $t\in\kappa$ is  general. We denote the resulting sub-representation by $M^t$. From the claim, one checks directly that $$\Phi(M^t)=\Phi(M)+\sum_{\lambda\leq i\leq 0}\sum_ue_{v_i,u}\leq\Phi(M'),$$
where $e_{\bullet,\bullet}$ is the standard basis of $\mathbb Z^{|\Gamma|^2}$ and $u$ runs through all vertices in $\Gamma$ such that $\Conv(v,u)$ contains some $v_j$ with $m\leq j\leq\delta$. We then replace $M$ by $M^t$ and do the deformation process until $\Phi(M^t)=\Phi(M')$. Then by part (1) we know that $M^t\simeq M'$ and hence $[M]\in \mathcal S^c_{M'}$, which implies that $\mathcal S_M\prec_{\mathrm{top}}\mathcal S_{M'}$.

It now remains to prove Claim~\ref{cl:inequality}. We first consider the case $j=m$. For all $\lambda\leq i< 0$ we have $a_{i+1,m-1}=0\leq a'_{i+1,m-1}$, equivalently, 
$$ \sigma_{i,m}-\sigma_{i,m-1}\geq\sigma_{i+1,m}-\sigma_{i+1,m-1}.$$
By induction on $i$, we have $$\sigma_{i,m}-\sigma_{i,m-1}\geq\sigma_{0,m}-\sigma_{0,m-1}=\sigma_{0,m}>0$$
for all $\lambda\leq i\leq 0$. Therefore, $\sigma_{i,m}\geq \sigma_{0,m}>0$ for all $\lambda\leq i\leq 0$.

Now assume $j>m$. For $\lambda \leq i\leq 0$  we have $a_{i+1,j-1}=0$ by assumption, thus $$ \sigma_{i,j}-\sigma_{i,j-1}\geq\sigma_{i+1,j}-\sigma_{i+1,j-1}.$$
Inductively, we get $$ \sigma_{i,j}-\sigma_{i,j-1}\geq\sigma_{0,j}-\sigma_{0,j-1}.$$
Hence $$\sigma_{i,j}-\sigma_{0,j}\geq\sigma_{i,j-1}-\sigma_{0,j-1}\geq\cdots\geq \sigma_{i,m}-\sigma_{0,m}\geq 0,$$ which means $\sigma_{i,j}\geq \sigma_{0,j}$.
On the other hand, by the assumption on $\delta$, we have $a_{1,j-1}=0\leq a'_{1,j-1}$. Hence $$ \sigma_{0,j}-\sigma_{1,j}\geq\sigma_{0,j-1}-\sigma_{1,j-1}.$$
Inductively, we have $$ \sigma_{0,j}-\sigma_{1,j}\geq\sigma_{0,m}-\sigma_{1,m}=\sigma_{0,m}>0,$$
which means $\sigma_{0,j}>0$. Therefore $\sigma_{i,j}\geq \sigma_{0,j} >0$ for all $\lambda \leq i\leq 0$ and we are done.
\end{proof}

\begin{rem}\label{rem:projective max}
   In the proof above, assuming $\Phi(M)<\Phi(M')$, we have found a direct summand $M_\bullet$ of $M$ supported on $v_\lambda,\dotsc,v_{m-1}$ with $m-1<\delta\leq \lambda+n$ by Claim~\ref{cl:inequality}. Note that this can not happen if $M$ is projective. Therefore, the projective sub-representations have the maximal rank vectors among $\mathrm{Gr}(\underline x,M_\Gamma)$.
\end{rem}

\begin{cor}\label{cor:bruhat order=rank order}
    Let $\Gamma$ be an arbitrary convex lattice configuration contained in an apartment of $\fb_d$. Then the generalized Bruhat-order on $LG_r(\Gamma)$ is the same as the rank-vector order. 
\end{cor}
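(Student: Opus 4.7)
The plan is to decompose both orders along the simplicial decomposition $\Gamma=\bigcup_{j\in\cJ}V(\tau_j)$ of $\Theta(\Gamma)$, and then invoke the one-simplex case already settled in Section~\ref{subsec:linked grass one simplex}. Note that by Lemma~\ref{lem:bruhat order and rank vector}, the two stratifications being compared already coincide set-theoretically, so only the orders require comparison.

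First, by Definition~\ref{defn:order1}, the generalized Bruhat order on $\mathbf{adm}_r(\Gamma)$ is literally a product over $j\in\cJ$ of Bruhat orders on the single-simplex double cosets $W_j\backslash W_a/W_{F_j}$. On the other hand, Lemma~\ref{lem:convex hull of two points}(3) implies that for any $u,v\in\Gamma$, the entry $\Phi_{u,v}(M)$ equals $\Phi_{u,w}(M)$ whenever $w\in\Conv(u,v)$ is adjacent to $u$; in particular, such a $w$ lies in a common maximal simplex with $u$. Consequently the rank vector $\Phi(M)$ is determined by its restrictions $\Phi|_{\Gamma_j}(M)$ to the $\Gamma_j$'s (glued compatibly along overlaps), and the rank-vector order decomposes as a product of rank-vector orders on each $\Gr(\mathbf r,M_{\Gamma_j})$ as well.

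It therefore suffices to compare the two orders on each single simplex $\Gamma_j$. For this, I would chain two identifications: Theorem~\ref{thm:open_cover1}(2) identifies the Bruhat order on the relevant parahoric double cosets with the topological order of the stratification of $LG_r(\Gamma_j)_0$; since every simplex is locally weakly independent (its quiver is a single cycle, Example~\ref{ex:quiver of a simplex}), Theorem~\ref{thm:stratification}(2) then identifies the topological order with the rank-vector order on $\Gr(\mathbf r,M_{\Gamma_j})$. Composing the two gives the equality of the Bruhat and rank-vector orders on each $\Gamma_j$, and taking the product over $j\in\cJ$ yields the full statement.

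I do not anticipate any serious obstacles: the corollary is essentially a bookkeeping argument assembling three prior results (Lemma~\ref{lem:bruhat order and rank vector}, Theorem~\ref{thm:open_cover1}(2), and the single-simplex specialization of Theorem~\ref{thm:stratification}(2)). The only point requiring care is the verification that both orders are genuinely product orders along the maximal simplices; the generalized Bruhat order is product by definition, while for the rank vector it is the content of Lemma~\ref{lem:convex hull of two points}(3) as used above.
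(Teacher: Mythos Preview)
Your proposal is correct and follows essentially the same approach as the paper: reduce both orders to their simplex-wise restrictions (the generalized Bruhat order by definition, the rank-vector order via Lemma~\ref{lem:convex hull of two points}(3) as in the proof of Lemma~\ref{lem:bruhat order and rank vector}), then on a single simplex chain Theorem~\ref{thm:open_cover1}(2) with Theorem~\ref{thm:stratification}(2). The paper's proof is terser but logically identical.
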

\begin{proof}
    As we already seen in the proof of Lemma~\ref{lem:bruhat order and rank vector}, the rank vector of a sub-representation $M$ of $M_\Gamma$ is determined by the rank vector of the restriction of $M$ on each maximal simplex in $\Theta(\Gamma)$. On the other hand, by definition, the generalized Bruhat order is also defined in terms of each maximal simplex. Therefore, we may assume that $\Gamma$ is a simplex, hence locally weakly independent. By Theorem~\ref{thm:stratification}, the rank-vector order is same as topological order, which is equivalent to the Bruhat order by Theorem~\ref{thm:open_cover1} (2).
\end{proof}

Now, following Theorem~\ref{thm:stratification}, an identical argument as in \cite[Proposition 3.5]{he2023degenerations} shows that $(\mathcal S_{M})_{[M]}$ is a family of ``well-behaved" strata. More precisely, we have

\begin{prop}
$\mathcal S_M$ is an irreducible and locally closed subset of $\Gr(\underline x,M_\Gamma)$ of dimension $$\dim \h(M,M_\Gamma)-\dim\End{(M)}.$$
\end{prop}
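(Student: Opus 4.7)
The plan is to follow the strategy of \cite[Proposition 3.5]{he2023degenerations}, leveraging the stratification comparison result of Theorem \ref{thm:stratification} as the key input that replaces the more restrictive locally-linearly-independent hypothesis of \textit{loc.cit.}

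First, I would establish local closedness. By Theorem \ref{thm:stratification} (1), the quiver stratum $\mathcal S_M$ coincides with the rank-vector stratum $\Phi^{-1}(\Phi(M))$. The rank-vector stratum is cut out of $\Gr(\underline x, M_\Gamma)$ by the conditions $\Phi_{u,v} \geq \Phi_{u,v}(M)$ (an open condition, as it is the non-vanishing of certain minors) and $\Phi_{u,v} \leq \Phi_{u,v}(M)$ (a closed condition, as it is the vanishing of larger minors), for all pairs $u,v \in \Gamma$. Hence $\mathcal S_M$ is locally closed.

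Next, for irreducibility and dimension, I would introduce the auxiliary variety
\[
\h_{\mathrm{inj}}(M, M_\Gamma) \subset \h(M, M_\Gamma),
\]
the open subset consisting of quiver morphisms $\varphi = (\varphi_v)_v$ that are injective at every vertex. This is an open subvariety of an affine space (non-empty because $[M] \in \mathcal S_M$ means $M$ embeds in $M_\Gamma$), hence irreducible of dimension $\dim \h(M, M_\Gamma)$. The assignment $\varphi \mapsto (\varphi_v(M_v))_v$ gives a well-defined surjective morphism
\[
\Psi \colon \h_{\mathrm{inj}}(M, M_\Gamma) \longrightarrow \mathcal S_M,
\]
since every sub-representation isomorphic to $M$ is the image of some injection. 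The group $\aut(M)$, which is an open subscheme of $\End(M)$ and hence has $\dim \aut(M) = \dim \End(M)$, acts freely on $\h_{\mathrm{inj}}(M, M_\Gamma)$ by precomposition, and two injective morphisms $\varphi, \varphi'$ have the same image if and only if $\varphi' = \varphi \circ g$ for some $g \in \aut(M)$. Therefore $\Psi$ is an $\aut(M)$-torsor onto its image, giving irreducibility of $\mathcal S_M$ and the dimension formula
\[
\dim \mathcal S_M = \dim \h(M, M_\Gamma) - \dim \End(M).
\]

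The main technical point is verifying that $\Psi$ is indeed an $\aut(M)$-torsor; that is, that the fibers are exactly $\aut(M)$-orbits and that the quotient morphism is sufficiently well-behaved to compute dimensions by subtraction. The former is the elementary statement that two embeddings of $M$ into $M_\Gamma$ with the same image differ by an automorphism of $M$, while the latter follows from generic flatness and the freeness of the action. None of this depends on the locally weakly independent hypothesis beyond what is already packaged into Theorem \ref{thm:stratification}, so the argument transfers verbatim from the locally linearly independent case treated in \cite{he2023degenerations}.
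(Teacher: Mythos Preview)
Your proposal is correct and follows essentially the same approach as the paper: the paper simply states that, given Theorem~\ref{thm:stratification}, the argument is identical to \cite[Proposition 3.5]{he2023degenerations}, and your outline reproduces precisely that argument---using the rank-vector identification for local closedness, and the $\aut(M)$-torsor $\h_{\mathrm{inj}}(M,M_\Gamma)\to\mathcal S_M$ for irreducibility and the dimension formula.
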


We next compute all possible rank vectors that arise from $\Gr(\underline x,M_\Gamma)$. According to Theorem~\ref{thm:stratification} (1), this gives a numerical classification of the strata $(\mathcal S_{M})_{[M]}$. We have done this for $LG_r(\Gamma)_0=\Gr(\mathbf r,M_\Gamma)$ when $\Gamma$ is a locally linearly independent configuration in \cite{he2023degenerations}. In this paper, let us focus on the case when $\Gamma$ is a simplex for simplicity while varying the dimension $\underline x$, although the argument also works generally for linearly weakly independent configurations. Note that  a similar result was already carried out for $LG_r(\Gamma)_0$, where $\Gamma$ is a maximal simplex in $\fb_d$, in \cite{gortz2010supersingular} in terms of Bruhat stratification. However, 
as stated in the paragraph following Proposition 2.6 of \textit{loc.cit.}, a ``simple characterization" of the possible rank vectors was not obtained.

We may write $\Gamma=\{v_0,...,v_n\}$ and $v_i=v_{i+n+1}$ for each $i$. For convenience, we write $x_i=x_{v_i}$ as coordinates of $\underline x$ and  $\Phi_{i,j}=\Phi_{v_i,v_j}$; we also use $(d_{i,j})_J$ as coordinates of $\mathbb Z^{|\Gamma|^2}$, where $J=\{(i,j)|0\leq i\leq n\mathrm{\ and\ }i\leq j\leq n+i\}$. 
In addition, for arbitrary $i$ and $j$, write $d_{i,j}=d_{i+n+1,j+n+1}$ if $0\leq j-i\leq n$, and $d_{i,j}=0$ otherwise. The rank-vector map $\Phi$  is then given by $d_{i,j}=\Phi_{i,j}$ for $(i,j)\in J$ and $j\leq n$, and $d_{i,j}=\Phi_{i,j-(n+1)}$ otherwise. In addition, we have $\Phi_{i,i}=x_i$ for each $i$ by construction. 

\begin{prop}\label{prop:simplex strata}
Let $\Gamma$ be a simplex as above. Then a tuple $(d_{i,j})_J$ is contained in the set  $\Phi(\Gr(\underline x,M_\Gamma))$ of rank vectors if and only if 
\begin{enumerate}
    \item for each $(i,j)\in J$, $d_{i,j}+d_{i-1,j+1}-d_{i,j+1}-d_{i-1,j}\geq 0$; and
    \item     
    $d_{i,i}=x_i$  and $d_{i,i}-d_{i,i+1}\leq \Phi_{i,i}(M_\Gamma)-\Phi_{i,i+1}(M_\Gamma)=\dim\ker f_{i,i+1}$ for $0\leq i\leq n$.
\end{enumerate}
\end{prop}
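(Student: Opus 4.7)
The plan is to prove both directions using the decomposition of sub-representations of $M_\Gamma$ given by Theorem~\ref{thm:decomposition}, combined with the explicit description of singly generated summands in Remark~\ref{rem:irred rep}. Since $\Gamma$ is a simplex, $Q(\Gamma)$ is a single cycle of length $n+1$, and each singly generated sub-representation of $M_\Gamma$ is determined up to isomorphism by its support, which is a cyclic arc $\{v_i, v_{i+1}, \ldots, v_j\}$; denote the corresponding representation by $M_{i,j}$. A key additional input is that $\ker f_{v_i,v_{i+n+1}} = \overline L_{v_i}$, since the full cycle composition vanishes by Proposition~\ref{prop:ambient representation1}(3).

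For the necessity direction, given $M \in \Gr(\underline x, M_\Gamma)$, I would decompose $M \cong \bigoplus_{(i,j)\in J} M_{i,j}^{\oplus a_{i,j}}$. A direct count of contributions at each vertex yields $\Phi_{i,j}(M) = \sum_{k \le i,\, l \ge j} a_{k,l}$, and M\"obius inversion gives
\[
a_{i,j} = d_{i,j} + d_{i-1,j+1} - d_{i,j+1} - d_{i-1,j}.
\]
Non-negativity of the $a_{i,j}$'s is exactly (1). For (2), $d_{i,i} = \dim V_{v_i} = x_i$ is by construction, and $d_{i,i} - d_{i,i+1} = \sum_{k \le i} a_{k,i}$ counts the summands whose support ends at $v_i$; their contributions to $V_{v_i}$ are linearly independent vectors lying inside $\ker f_{v_i, v_{i+1}}$, so the count is bounded by $\dim \ker f_{v_i, v_{i+1}}$.

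For sufficiency, given $(d_{i,j})_J$ satisfying (1) and (2), I would set $a_{i,j}$ via the same inversion formula, so $a_{i,j} \ge 0$, and construct $M \cong \bigoplus M_{i,j}^{\oplus a_{i,j}}$ explicitly inside $M_\Gamma$ via compatible generic choices of generators. At each vertex $v_i$, the filtration
\[
\{0\} \subsetneq \ker f_{v_i,v_{i+1}} \subseteq \cdots \subseteq \ker f_{v_i,v_{i+n+1}} = \overline L_{v_i}
\]
partitions the generator strata: a copy of $M_{i,j}$ requires a generator in $\ker f_{v_i, v_{j+1}} \setminus \ker f_{v_i, v_j}$. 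Granted dimensional feasibility at every vertex, generic choices ensure independence of all contributions, and a direct computation mirroring the necessity argument recovers $(d_{i,j})$ as the rank vector.

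The feasibility condition is that at every $v_i$ and every $0 \le k \le n$,
\[
x_i - d_{i,i+k+1} \;\le\; \dim \ker f_{v_i, v_{i+k+1}},
\]
counting the total contributions to $V_{v_i}$ landing inside $\ker f_{v_i, v_{i+k+1}}$ (both from generators at $v_i$ and from images of summands originating elsewhere). The case $k=0$ is precisely condition (2); the main obstacle I anticipate is the combinatorial bookkeeping required to derive the remaining cases from (1), (2), and the cyclic convention $d_{i,j}=0$ for $j - i > n$. The plan is an inductive argument on $k$, using (1) as the inductive step together with the non-negativity of the telescoping auxiliary $a$-sums, effectively propagating the base case (2) to all deeper kernels.
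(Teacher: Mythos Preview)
Your necessity direction matches the paper's argument. The difference is in the sufficiency construction, and there the paper's route is both different and cleaner than yours.

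You place the $a_{i,j}$ generators for $M_{i,j}$ at the \emph{source} vertex $v_i$, inside the kernel stratum $\ker f_{v_i,v_{j+1}}\setminus\ker f_{v_i,v_j}$. This forces you to verify the full family of inequalities
\[
x_i-d_{i,i+k+1}\le\dim\ker f_{v_i,v_{i+k+1}}\qquad(0\le k\le n),
\]
and you correctly flag this as the main obstacle. These inequalities \emph{do} follow from (1) and (2) --- they are necessary conditions for any embedding, and the proposition says (1) and (2) suffice --- but a direct combinatorial derivation is not the obvious induction on $k$ you sketch. Writing $c_{i,k}=\Phi_{i,i+k}(M_\Gamma)-d_{i,i+k}$, one needs $c_{i,k}\le c_{i,0}$ for all $k$; condition (2) gives $c_{i,1}\le c_{i,0}$, but the increment $c_{i,k+1}-c_{i,k}$ involves sums of $(\tilde a_{k',l'}-a_{k',l'})$ with $k'\le i$, which need not have a fixed sign. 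You would also need a genuine independence argument for the sub-representations generated at different vertices; ``generic choices'' is not self-evidently enough, and something like Lemma~\ref{lem:decompose independent} is needed.

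The paper avoids all of this by placing the $m_j:=\sum_{k\le j}a_{k,j}$ generators not at the source but at $\overline L_{j+1}$, chosen complementary to $\mathrm{im}\,f_{j,j+1}=\ker f_{j+1,j}$. Condition (2) alone guarantees there is room: $m_j=d_{j,j}-d_{j,j+1}\le\dim\ker f_{j,j+1}=\mathrm{codim}\,\mathrm{im}\,f_{j,j+1}$. Each such $y_l$ generates a \emph{projective} (full-support) sub-representation $P_{y_l}$, since $y_l\notin\ker f_{j+1,j}$ means it survives all the way around the cycle. Lemma~\ref{lem:decompose independent} then gives global independence of the $P_{y_l}$ for free. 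Finally one restricts: $R_l\subset P_{y_l}$ is the sub-representation generated by $f_{j+1,k_l}(y_l)$, supported exactly on $v_{k_l},\ldots,v_j$, and sub-representations of independent summands remain independent. The key idea you are missing is this ``projective cover'' trick: generating at the vertex \emph{after} the arc ends, rather than where it begins, collapses the feasibility problem to the single case $k=0$.
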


Suppose $(d_{i,j})$ is realizable as the rank vector of a representation $M$, which is decomposed as a direct sum of irreducible ones. Then
Condition (1) is just saying that, for each $(i,j)\in J$, the expected number of representations in the decomposition supported on the path $v_i,...,v_j$ is non-negative; and Condition (2) is saying that the number of all representations in the decomposition of $M$ supported on a path with target $v_i$ is no more than that in the decomposition of $M_\Gamma$.

\begin{proof}[Proof of Proposition~\ref{prop:simplex strata}]
Write $f_{i,j}=f_{v_i,v_j}$ and $\overline L_i=\overline L_{v_i}$ for convenience. We first prove the ``only if" part. By the discussion above, it remains to prove the second half of part (2). Suppose $(d_{i,j})=\Phi(M)$, then $d_{i,i}-d_{i,i+1}=\dim\ker f_{i,i+1}|_M\leq \dim\ker f_{i,i+1}$ and we are done.

For the ``if" part, let $M=\oplus_J M_{i,j}$ where $M_{i,j}$ is the direct sum of $a_{i,j}:=d_{i,j}+d_{i-1,j+1}-d_{i,j+1}-d_{i-1,j}\geq 0$ copies of the irreducible sub-representations of $M_\Gamma$ supported on $v_i,...,v_j$. Similarly, we set $a_{i,j}=a_{i+n+1,j+n+1}$ if $0\leq j-i\leq n$, and $a_{i,j}=0$ otherwise.
It suffices to show that $M$ can be realized as a sub-representation of $M_\Gamma$. 

Denote $m_j:=d_{j,j}-d_{j,j+1}=\sum_{k\leq j}  a_{k,j}$. Take $m_j$ independent vectors $y_1,\dotsc,y_{m_j}\in \overline L_{j+1}$ such that they generate a subspace of $\overline L_{j+1}$ which intersects trivially with $f_{j,j+1} (\overline L_j)=\ker f_{j+1,j}$. This is possible by Condition (2). It follows that these vectors generate a sub-representation $M'_j=\oplus_{1\leq l\leq m_j}P_{y_l}$ of $M_\Gamma$, where $P_{y_l}$ is the projective sub-representation generated by $y_l$. According to Lemma~\ref{lem:decompose independent}, $\sum_{0\leq j\leq n} M'_j$, as a sub-representation of $M_\Gamma$, is a direct sum. Now for each $1\leq l\leq m_j$, there is a $k_l\leq j$ such that $\sum_{k\leq k_l-1}a_{k,j}<l\leq \sum_{k\leq k_l}a_{k,j}$. We take $R_l$ to be the sub-representation of $P_{y_l}$ generated by $f_{j+1,k_l}(y_l)$, and let $M_j\subset M'_j$ be the direct sum of all $R_l$s. Then for each $0\leq j\leq n$, $M_j$ is isomorphic to the direct sum of the items $M_{\bullet,\bullet}$ supported on a path with target $v_j$ (in other words, $M_{\bullet,j}$ and $M_{\bullet,j+n+1}$). We then have $M\simeq \oplus_{0\leq j\leq n} M_j$, which is a sub-representation of $M_\Gamma$.
\end{proof}

We now focus on $LG_r(\Gamma)_0=\Gr(\mathbf{r},M_\Gamma)$.
In \cite{he2023degenerations} we proved that, when $\Gamma$ is locally linearly independent, every sub-representation of $M_\Gamma$ of dimension $\mathbf{r}$ can be decomposed as a direct sum of sub-representations of dimension $\mathbf{1}=(1,1,...,1)$. However, this is not necessarily the case when $\Gamma$ is only locally weakly independent. For instance, consider $\Gamma$ as a simplex with 3 vertices $v_0,v_1,v_2$. By Proposition~\ref{prop:simplex strata}, there exist sub-representation $[M]\in LG_2(\Gamma)_0$ such that $M$ is decomposed as $M=\oplus_{0\leq i\leq 2}M_i$, where each $M_i$ is supported on $\Gamma\backslash\{v_i\}$. Then it is easy to see that $M$ can not be decomposed into representations of dimension $\mathbf{1}$. Nevertheless, it is still true that the projective sub-representations are dense in $LG_r(\Gamma)_0$, even when $\Gamma$ is only locally weakly independent. This, combined with the fact that projective points in $LG_r(\Gamma)_0$ are contained in the closure of the generic fiber $LG_r(\Gamma)_\eta$, shows that $LG_r(\Gamma)$ is irreducible. See Theorem~\ref{thm:flatness}.

\begin{prop}\label{prop:projective dense}
Suppose $\Gamma$ is locally weakly independent, then projective sub-representations are dense in $LG_r(\Gamma)_0$. Moreover, a representation $[M]\in LG_r(\Gamma)_0$ is projective if and only if $\Phi(M)$ is maximal among $LG_r(\Gamma)_0$. In particular, the irreducible components of $LG_r(\Gamma)_0$ are exactly of the form $\mathcal S_M^c$, where $M$ is any isomorphic class of projective sub-representation of $M_\Gamma$.
\end{prop}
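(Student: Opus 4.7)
First I would show that $LG_r(\Gamma)_0$ contains a projective sub-representation of dimension $\mathbf r$. Either use properness of $LG_r(\Gamma)/R$ applied to any $K$-point $V\in\mathrm{Gr}(r,K^d)$, whose specialization gives a projective sub-representation of $M_\Gamma$ of dimension $\mathbf r$; alternatively, construct one directly, noting that at any vertex $v\in\Gamma$ the locus of $\epsilon\in\overline L_v$ generating a projective irreducible is Zariski open and dense (Remark~\ref{rem:irred rep} identifies the complementary non-projective locus as a finite union of kernels of $f_{v,w}$), and assembling enough independent generators distributed across appropriate vertices to yield, via Lemma~\ref{lem:decompose independent}, a direct sum of projective irreducibles with total dimension $\mathbf r$.

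Next, I would prove the characterization ``$[M]$ projective if and only if $\Phi(M)$ is maximal.''  The forward direction is Remark~\ref{rem:projective max}.  For the converse, suppose $[M]$ is non-projective; by Theorem~\ref{thm:decomposition}, its decomposition $\bigoplus_k M_k^{\oplus\alpha_k}$ contains a non-projective summand $M_k$ generated by $\epsilon\in\overline L_v$ with $f_{v,v_m}(\epsilon)=0$ for some $v_m$ on a cycle $v=v_0,\dotsc,v_n,v_0$ in $Q(\Gamma)$ (Remark~\ref{rem:irred rep}).  Since the dimension of $M$ at $v_m$ equals $r\ge 1$, some other summand $M_{k'}$ of $M$ contains $v_m$ in its support.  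I then would construct $M''\in LG_r(\Gamma)_0$ with $\Phi(M)<\Phi(M'')$ by perturbing one copy of $M_k$ via $\epsilon\mapsto\epsilon+t\epsilon'$ for $\epsilon'\in\overline L_v$ chosen to extend the support past $v_m$, while trimming a copy of $M_{k'}$ along the complementary arc of $\vec\Delta_{v,z}$ so as to preserve the total dimension vector $\mathbf r$.  This is a local variant of the deformation $\epsilon_1\mapsto\epsilon_1+t\epsilon_3$ from the proof of Theorem~\ref{thm:stratification}~(2), run upwards without reference to a fixed target $M'$.  Iterating yields a projective $P$ with $\Phi(M)<\Phi(P)$, proving both that $\Phi(M)$ is not maximal and, via $\mathcal{S}_M\subset\mathcal{S}_P^c$ (Theorem~\ref{thm:stratification}~(2)), that projective sub-representations are dense in $LG_r(\Gamma)_0$.

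Finally, for the irreducible components, the preceding proposition guarantees that each $\mathcal{S}_M^c$ is irreducible, and Theorem~\ref{thm:stratification}~(2) gives $\mathcal{S}_M^c\subsetneq\mathcal{S}_{M'}^c$ exactly when $\Phi(M)<\Phi(M')$.  Hence the maximal, equivalently component, closures are exactly $\{\mathcal{S}_P^c\mid P\ \text{projective}\}$, with distinct isomorphism classes giving distinct components by Theorem~\ref{thm:stratification}~(1).  The main obstacle is the balanced deformation in the converse direction: one must identify a pair $(M_k,M_{k'})$ of summands whose supports fit together along the relevant cycle so that extending $M_k$ past $v_m$ and trimming $M_{k'}$ produces a valid sub-representation of dimension $\mathbf r$, possibly requiring a sequence of perturbations rather than a single one.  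The ``tree of cycles'' structure of $Q(\Gamma)$ from Proposition~\ref{prop:weakly indep} and the direct-sum decomposition from Theorem~\ref{thm:decomposition} are what make this combinatorial bookkeeping tractable.
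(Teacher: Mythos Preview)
Your overall architecture is right and matches the paper's: a deformation that strictly increases $\Phi$ gives density of projectives, and then the characterization follows from Remark~\ref{rem:projective max} together with Theorem~\ref{thm:stratification}. The paper in fact reverses your logical order slightly: it first establishes density by the deformation, and then deduces the converse of the characterization in one line (if $\Phi(M)$ is maximal, density plus Theorem~\ref{thm:stratification}(2) give a projective $P$ with $\Phi(M)\le\Phi(P)$, hence equality, hence $M\simeq P$ by Theorem~\ref{thm:stratification}(1)). This is cleaner than building the characterization directly into the deformation loop.

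The substantive gap is in your deformation. You propose to perturb the non-projective summand $M_k$ at $v=v_0$ via $\epsilon\mapsto\epsilon+t\epsilon'$ so as to push its support past $v_m$, and then ``trim'' $M_{k'}$ to restore dimension $\mathbf r$. Two problems: first, ``trimming'' is a discrete operation, not a one-parameter family, so you do not get $M$ in the closure this way; second, for the perturbation alone to preserve dimension via linear dependence (which is what you really need), you would have to lift the generator of $M_{k'}$ back to $\overline L_{v_0}$, and there is no reason that generator lies in $\mathrm{im}\,f_{v_0,\bullet}$. The paper avoids both issues by perturbing the \emph{other} summand. It chooses $M_2$ so that its support on the cycle begins at some $v_k$ with $k\ge m$ (such a summand exists by maximality of $m$ and the dimension count at $v_m$), notes that $\epsilon_1\in\ker f_{v_0,v_k}$ and hence lifts to some $\epsilon_3\in\overline L_{v_k}$ with $f_{v_k,v_0}(\epsilon_3)=\epsilon_1$, and then replaces the generator of $M_2$ by $\epsilon_2+t\epsilon_3$ while \emph{keeping $M_1$ unchanged}. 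No trimming is needed: on the overlap, the deformed $M_2^t$ and the retained $M_1$ become linearly dependent, so $\dim=\mathbf r$ is preserved automatically, and one checks directly that $\Phi$ strictly increases (equation~\eqref{eq:dimension of defom rep}). Your ``main obstacle'' paragraph correctly anticipates that the pairing of summands along the cycle is the crux; the resolution is to deform at $v_k$ rather than at $v_0$.
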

\begin{proof}
Let $M:=(U_v)_v$ be a sub-representation of $M_\Gamma$ that is not projective. By Theorem~\ref{thm:decomposition}, we may decompose $M$ as $M=\oplus_i M_i$, where each $M_i$ is generated by a single vector. We may assume that $M_1$ is not projective and generated by $\ep_1\in U_v$. As in Remark~\ref{rem:irred rep}, there is a $z\in I_v$ and $w\in\Delta_{v,z}$ such that $f_{v,w}(\ep_1)=0$. We again write $\vec\Delta_{v,z}$ as $v=v_0,v_1,...,w=v_m,...,z=v_n,v_0$ and assume $f_{v,v_{m-1}}(\ep_1)\neq 0$. Moreover, we assume that $m$ is the largest number such that there is some $M_i$ such that $M_i|_{\Delta_{v,z}}$ is not projective and supported on $m$ vertices.

Since $\dim U_u=r$ for all $u\in\Delta_{v,z}$, there must be some $M_i$, say $M_2$, whose restriction on $\Delta_{v,z}$ is generated by a vector $\ep_2\in U_{v_k}$ with $m\leq k\leq n$, such that for some $k>l\geq 0$, we have $f_{v_k,v_l}(\ep_2)=0$ and $f_{v_k,v_{l-1}}(\ep_2)\neq 0$ (here we set $v_{-1}=v_n$ if necessary). See Figure~\ref{fig:projective}. Since $m$ is maximal, we must have $l<m$ and $M_2$ is not projective. Note that $M_2$ is generated by $\ep_2$ according to Remark~\ref{rem:irred rep}.

  \tikzset{every picture/.style={line width=0.75pt}} 
\begin{figure}[ht]
\begin{tikzpicture}[x=0.45pt,y=0.45pt,yscale=-1,xscale=1]
\import{./}{figprojective.tex}
\end{tikzpicture}
\caption{The direct summands $M_1$ and $M_2$ of $M$ as in the proof of Proposition~\ref{prop:projective dense}.}\label{fig:projective}
\end{figure}

Since $f_{v,v_k}(\ep_1)=0$, there is an $\ep_3\in \overline L_{v_k}$ such that $f_{v_k,v}(\ep_3)=\ep_1$. We now deform $M$ as follows: In the decomposition $M=\oplus_i M_i$,
replace $M_2$ with the representation generated by $\ep_2+t\cdot \ep_3$. As a result, we get a sub-representation $M^t$ for general $t\in \kappa$. 
As in the proof of Theorem~\ref{thm:stratification}, we see that
\begin{equation}\label{eq:dimension of defom rep}
    \Phi(M^t)=\Phi(M)+\sum_{k\leq i\leq n}\sum_we_{v_i,w}
\end{equation} where $w$ runs over all vertices in $\Gamma$ such that $\Conv(v,w)$ contains one of $v_l,\dotsc,v_{m-1}$. It follows that $\Phi(M^t)>\Phi(M)$ and $\mathcal S_M\subset \mathcal S_{M^t}^c$. If $M^t$ is not projective, we replace $M$ with $M^t$ and apply the same argument. Since $\Phi(M^t)$ is bounded above by $(r,\dotsc,r)$, the argument must terminate, and in this case $M$ is deformed to projective representations. Hence, we get the density of projective representations. 

We now prove the second assertion. If $M$ is projective, then $\Phi(M)$ is maximal by Remark~\ref{rem:projective max}. On the other hand, suppose $\Phi(M)$ is maximal. By the density of projective points in $LG_r(\Gamma)_0$ and Theorem~\ref{thm:stratification} (2), there must be some projective representation $[P]\in LG_r(\Gamma)_0$ such that $\Phi(M)\leq \Phi(P)$. Hence $\Phi(M)= \Phi(P)$ and $M\simeq P$ is projective by Theorem~\ref{thm:stratification} (1).
\end{proof}

\subsection{Global geometry}\label{subsec:weakly indep global geometry}
We end this section by showing that the linked Grassmannian $LG_r(\Gamma)$ is a flat degeneration of its generic fiber, i.e., a Grassmannian, when $\Gamma$ is locally weakly independent. In particular, we obtain the irreducibility of $LG_r(\Gamma)$, which is the key property that plays a role in the study of smoothability of limit linear series. See \S~\ref{sec:lls}. Note that a similar result was obtained when $\Gamma$ is locally linearly independent in \cite{he2023degenerations}.



\begin{thm}\label{thm:flatness}
Let $\Gamma$ be a locally weakly independent configuration. 
Then $LG_r(\Gamma)$ is irreducible and flat over $R$. Moreover, both $LG_r(\Gamma)$ and its special fiber are reduced and Cohen-Macaulay. 
\end{thm}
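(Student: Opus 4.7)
My plan is to combine the density of projective points in $LG_r(\Gamma)_0$ (Proposition~\ref{prop:projective dense}) with a lifting argument to show that $LG_r(\Gamma)_\eta$ is dense in $LG_r(\Gamma)$, thereby obtaining irreducibility and flatness simultaneously; reducedness will then come from generic smoothness once the Cohen-Macaulay property is in hand, and Cohen-Macaulayness will be proved locally via the standard open cover. This roughly mirrors the strategy used for locally linearly independent configurations in \cite{he2023degenerations}.

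The first step is to show that every projective $\kappa$-point $[M]$ of $LG_r(\Gamma)_0$ lifts to an $R$-point of $LG_r(\Gamma)$ whose generic fiber lies in $LG_r(\Gamma)_\eta=\mathrm{Gr}(r,d)_K$. Viewing $M=(V_v)_v$ as a sub-representation of $M_\Gamma=(\ov L_v)_v$, I would use Theorem~\ref{thm:decomposition} to decompose $M$ into indecomposable projective summands, each generated by a single vector that lifts from $\ov L_v$ to $L_v$; these lifts assemble into a collection of rank-$r$ locally-free $R$-summands of $L_v$ respecting the linkage conditions, giving the desired $R$-point of $LG_r(\Gamma)$ reducing to $[M]$. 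Density of projective points then forces every component of $LG_r(\Gamma)_0$ into the closure $Z$ of $LG_r(\Gamma)_\eta$, so $Z=LG_r(\Gamma)$ and $LG_r(\Gamma)$ is irreducible. Since $\mathrm{Gr}(r,d)_K$ is reduced and $R$ is a DVR, the structure sheaf of $LG_r(\Gamma)$ is torsion-free over $R$, yielding flatness.

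The main obstacle is the Cohen-Macaulay property. My strategy is to work locally through the standard open cover $\mathfrak U_\Gamma$ of Proposition~\ref{prop:std_open} together with the equalizer description (\ref{eqn:equalizer}): each chart $U_{\mathbf x}$ is realized as a fiber product of corresponding charts for the one-simplex linked Grassmannians $LG_r(V(\tau_j))$ over charts for the common-face ones $LG_r(V(\tau_j\cap\tau_{j'}))$. Cohen-Macaulayness in the one-simplex case is available from Theorem~\ref{thm:open_cover1} together with the local-models results of \cite{gortz2001flatness} and \cite{PR2}. Propagating it through the fiber product requires the forgetful maps between simplex linked Grassmannians to be flat with Cohen-Macaulay fibers over a neighborhood of the projective-point loci; I expect this to follow from the smoothness of $LG_r(V(\tau_j))_0$ at projective points (Remark~\ref{rem:weakly indep and pre-linked}) together with a dimension count confirming that restriction of projective sub-representations to subsimplices is an equidimensional map. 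The technical verification of compatibility of local equations across the fiber product is where I expect the bulk of the work to lie.

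Once $LG_r(\Gamma)$ is known to be flat and Cohen-Macaulay over the regular base $\mathrm{Spec}(R)$, miracle flatness transfers the Cohen-Macaulay property to $LG_r(\Gamma)_0$. Finally, generic smoothness of $LG_r(\Gamma)_0$ at projective points yields generic reducedness, and the absence of embedded primes (from Cohen-Macaulayness) promotes this to reducedness of $LG_r(\Gamma)_0$; reducedness of $LG_r(\Gamma)$ itself then follows from flatness combined with the reducedness of both the generic and special fibers.
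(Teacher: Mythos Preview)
Your irreducibility argument via lifting projective points is a viable alternative to the paper's, which instead observes that taking direct sums gives a dominant rational map $\prod_{i=1}^r LG_1(\Gamma)\dashrightarrow LG_r(\Gamma)$ and then cites irreducibility of the Mustafin variety $LG_1(\Gamma)$. Note however that your flatness claim is premature: topological irreducibility together with reducedness of the generic fiber does not by itself rule out $\pi$-torsion nilpotents supported on the special fiber. The paper, like you, defers flatness until after reducedness is established (via \cite[Lemma~6.13]{Olls}).

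The genuine gap is in your Cohen--Macaulay argument. Working through the full equalizer diagram~(\ref{eqn:equalizer}) forces you to control the forgetful maps $LG_r(\Gamma_j)\to LG_r(\Gamma_{j,j'})$ for \emph{all} pairs $j,j'$; these maps are not flat in general, and the hope that flatness holds ``near projective points'' together with an unspecified dimension count is not enough to propagate Cohen--Macaulayness through an iterated fiber product. The paper avoids this entirely by exploiting the tree-of-simplices structure of $Q(\Gamma)$ established in Proposition~\ref{prop:weakly indep}: one inducts on the number of maximal simplices, at each step peeling off a leaf simplex $\Delta$ that meets the remaining configuration $\Gamma'$ in a \emph{single} vertex $v$. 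Then
\[
\Gr(\mathbf r,M_\Gamma)\;=\;\Gr(\mathbf r,M_{\Gamma'})\times_{\Gr(r,\ov L_v)}\Gr(\mathbf r,M_\Delta),
\]
a fiber product over the \emph{smooth} ordinary Grassmannian $\Gr(r,\ov L_v)$. Since all three quiver Grassmannians have pure dimension $r(d-r)$ (by irreducibility of the corresponding linked Grassmannians), the fiber product has the expected codimension $r(d-r)$ inside the product, hence is a local complete intersection in a Cohen--Macaulay scheme (the inductive hypothesis plus \cite{he2013normality} for the simplex case handling the base). This gives Cohen--Macaulayness of the special fiber directly; the total space is then Cohen--Macaulay by \cite[Cor., p.~181]{matsumura_1987}, reversing the order you proposed.
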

With all the ingredients we've developed so far, the proof is now identical to the proof of \cite[Theorem 3.12]{he2023degenerations}. We include it here for the convenience of the readers.
\begin{proof} 
By Proposition \ref{prop:projective dense}, we have a rational and dominant morphism  $$\prod_{1\leq i\leq r}LG_1(\Gamma)\dashrightarrow LG_r(\Gamma)$$ induced by taking the direct sum of the dimension-$1$ subspaces, where the product on the left is over $R$. Since $LG_1(\Gamma)$ is irreducible, so is the product. Hence $LG_r(\Gamma)$ is irreducible.

We now show the Cohen-Macaulayness of $\mathrm{Gr}(\mathbf r,M_\Gamma)=LG_r(\Gamma)_0$ by induction on the number of maximal simplices contained in $\Theta(\Gamma)$. The base case is that $\Gamma$ is a simplex, which is covered in \cite[Theorem 1.2]{he2013normality}.
So we may assume $\Theta(\Gamma)$ contains at least two maximal simplices. Let $\Delta$ be a maximal simplex in $\Theta(\Gamma)$ intersecting the union of the other maximal simplices in exactly one vertex $v$. Let $\Gamma'=(\Gamma\backslash \Delta)\cup\{[L_v]\}$. Then $\Gamma'$ is locally weakly independent by Proposition~\ref{prop:weakly indep 2} (2). 
Since the linked Grassmannians are irreducible, the special fibers $\Gr(\mathbf r,M_\Gamma)$, $\Gr(\mathbf r,M_{\Gamma'})$ and $\Gr(\mathbf r, M_{\Delta})$  all have pure dimensions $r(d-r)$. It follows that $\Gr(\mathbf r, M_\Gamma)$ is a local complete intersection in $\Gr(\mathbf r,M_{\Gamma'})\times \Gr(\mathbf r,M_\Delta)$. By the inductive hypothesis and \cite[\href{https://stacks.math.columbia.edu/tag/045Q}{Tag 045Q}]{stacks-project},  $\Gr(\mathbf r,M_{\Gamma'})\times \Gr(\mathbf r, M_\Delta)$ is Cohen-Macaulay, hence so is $\Gr(\mathbf r,M_{\Gamma})$. 

By Proposition~\ref{prop:projective dense}, the points represented by projective sub-representations of $M_\Gamma$ are dense in $\Gr(\mathbf r,M_\Gamma)$. Thus, according to Remark~\ref{rem:weakly indep and pre-linked}, $\Gr(\mathbf r,M_\Gamma)$ is generically smooth and hence 
generically reduced. Therefore, $\Gr(\mathbf r,M_\Gamma)$ is reduced by Cohen-Macaulayness. The reducedness and flatness of $LG_r(\Gamma)$ now follow from the irreducibility and \cite[Lemma 6.13]{Olls}. The Cohen-Macaulayness of $LG_r(\Gamma)$ is a consequence of \cite[Cor., page 181]{matsumura_1987}.
\end{proof}

\section{The Dimension-$\mathbf 1$ case}\label{sec:r1}
In this section, we assume $\Gamma$ is an arbitrary convex lattice configuration. The linked Grassmannian $LG_1(\Gamma)$ was historically known as \textit{Mustafin variety}. It was introduced by Mustafin (\cite{mustafin1978}) to study certain flat degenerations of projective spaces. The combinatorial geometry of Mustafin varieties was further investigated by Cartwright et al. in \cite{cartwright2011mustafin}. 

We will show that, as in Section~\ref{sec:weakind}, the topological order on $LG_1(\Gamma)_0$ with respect to the Bruhat stratification agrees with the generalized Bruhat-order. More precisely, we show that (1) the Bruhat stratification of $LG_1(\Gamma)_0$ is the same as the quiver stratification of $\Gr(\mathbf 1,M_\Gamma)$; and (2) the topological order of the latter is the same as the rank-vector order, hence the same as the generalized Bruhat order by Corollary~\ref{cor:bruhat order=rank order}. 
We do this by proving that the set of isomorphic classes of dimension-$\mathbf 1$ sub-representations $M$ of $M_\Gamma$ is identified with the set of faces of $\Theta(\Gamma)$ (Notation~\ref{nt:complex}), and the topological order is induced by the (reversed) containment relation between such faces. 
This fact was essentially obtained by Faltings in \cite[\S 5]{faltings2001toroidal} by carrying out the scheme structure of the closure of each quiver stratum $\mathcal S_M$.
For the reader's convenience, we recover this fact using the language of quiver representations. This will also give a combinatorial description of each $M$ through its corresponding face of $\Theta(\Gamma)$.

As mentioned before, we denote $\Gamma=([L_v])_v$ and $M_\Gamma=(\overline L_v:=L_v/\pi L_v)_v$. 
Denote $M=(\langle \ep_v\rangle)_v$. Following Faltings' argument, we call a vertex $u\in \Gamma$ \textit{maximal} with respect to $M$ if $f_{v,u}(\ep_v)=0$ for all $v\neq u$. It is easy to see that $M$ is generated by vectors from maximal vertices. We extract the observation in Faltings' argument that the maximal vertices with respect to $M$ are mutually adjacent. Indeed, suppose $u$ and $v$ are maximal and not adjacent, then we can pick an vertex $w$ in the convex hull of $u$ and $v$. According to Lemma~\ref{lem:convex hull of two points} (3), either $f_{w,u}(\ep_w)\neq 0$ or $f_{w,v}(\ep_w)\neq 0$, which provides a contradiction. It follows that the set of all maximal vertices of $\Gamma$ with respect to $M$ form a simplex, which we denote by $\Delta_M$. Given $u\in \Delta_M$, we denote by $J_{u,M}$ the set of vertices $v\in \Gamma$ such that $f_{u,v}(\ep_u)\neq 0$. Apparently, we have $\Gamma=\bigcup_u J_{u,M}$.

\begin{lem}\label{lem:simplex unique factor through}
    Let $\Delta\subset \fb_d$ be a simplex. Then for any vertex $v\in \mathfrak B_d$, there is a unique vertex $u$ of $\Delta$ such that for all $u'\in \Delta$, the map $f_{u',v}$ factors through $u$.
\end{lem}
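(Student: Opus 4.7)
The plan is to reduce the claim to an explicit computation inside a single apartment. By the building axioms, there is an apartment containing both $\Delta$ and $v$, and, after choosing a basis of $K^d$ adapted to this apartment, I may assume it is the standard apartment $\cA$. I then pick representatives $L_0\subsetneq L_1\subsetneq\cdots\subsetneq L_n\subsetneq\pi^{-1}L_0$ for the vertices of $\Delta$, which via Convention~\ref{conv:alcove} correspond to integer vectors $a^{(0)}\leq a^{(1)}\leq\cdots\leq a^{(n)}\leq a^{(0)}+(1,\dotsc,1)$, and a representative $L_v$ corresponding to some $c\in\ZZ^d$.

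The next step is to convert ``$f_{u',v}$ factors through $u$'' into an arithmetic condition on the integers $n_{i,j}$ of Notation~\ref{nt:morphisms in a lattice configuration}. Since $f_{k,v}\circ f_{i,k}$ is induced by multiplication by $\pi^{n_{i,k}+n_{k,v}}$, and the inequality $n_{i,v}\leq n_{i,k}+n_{k,v}$ is automatic from the minimality in the definition of $n_{i,v}$, the factorization $f_{i,v}=f_{k,v}\circ f_{i,k}$ is equivalent to the equality $n_{i,v}=n_{i,k}+n_{k,v}$ (otherwise the composition vanishes modulo $\pi L_v$ while $f_{i,v}$ does not). From the chosen representatives one has $n_{i,v}=\max_j(a^{(i)}_j-c_j)$, and the simplex condition forces $n_{i,k}=0$ for $i<k$ and $n_{i,k}=1$ for $i>k$, since $a^{(i)}-a^{(k)}$ has entries in $\{-1,0\}$ (resp.\ $\{0,1\}$) with at least one entry of each value.

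Combining these, the sequence $(n_{i,v})_i$ is non-decreasing (monotonicity of the $a^{(i)}$) and bounded above by $n_{0,v}+1$ (since $a^{(n)}\leq a^{(0)}+(1,\dotsc,1)$), so it takes at most the two values $n_{0,v}$ and $n_{0,v}+1$. I then define $u=[L_k]$ where $k$ is the largest index with $n_{k,v}=n_{0,v}$. For $i\leq k$ one has $n_{i,v}=n_{k,v}=0+n_{k,v}=n_{i,k}+n_{k,v}$, and for $i>k$ one has $n_{i,v}=n_{0,v}+1=1+n_{k,v}=n_{i,k}+n_{k,v}$, so the factorization holds for every $u'\in\Delta$. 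Uniqueness is immediate: two candidates $k_1<k_2$ would simultaneously force $n_{k_2,v}=n_{k_1,v}$ (applying the condition with $i=k_1$ and $u=[L_{k_2}]$) and $n_{k_2,v}=n_{k_1,v}+1$ (applying it with $i=k_2$ and $u=[L_{k_1}]$), which is absurd. No step is a genuine obstacle; the only subtlety is the careful translation between the map-theoretic condition and the arithmetic one, together with arranging the representatives of $\Delta$ and $v$ consistently inside one apartment so that the $\max$-formula for $n_{i,v}$ applies uniformly across all $u'\in\Delta$.
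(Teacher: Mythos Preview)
Your argument is correct. The translation of ``$f_{i,v}$ factors through $[L_k]$'' into the equality $n_{i,v}=n_{i,k}+n_{k,v}$ is sound (the key point being that $f_{i,v}\neq 0$ by minimality of $n_{i,v}$, while the composition vanishes whenever the exponents fail to add up), and the monotonicity argument pinning down the unique threshold index $k$ is clean.

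Your route, however, is genuinely different from the paper's. The paper does not place $\Delta$ and $v$ in a common apartment or pass to integer vectors; instead it argues by induction on $|\Delta|$, reducing to the case $\Delta=\{[L_u],[L_w]\}$ and then checking directly, after normalizing representatives so that $L_u\subset L_w\subset\pi^{-1}L_u$ and $L_u\subset L_v\subsetneq\pi^{-1}L_u$, that exactly one of $L_w\subset L_v$ or $L_w\not\subset L_v$ holds, yielding the two possible factorizations. The paper's approach is shorter and avoids invoking the building axiom to find a common apartment; your approach is more explicit and actually identifies $u$ by a formula (the largest $k$ with $n_{k,v}=n_{0,v}$), which could be useful if one later needs to compute $u$ concretely. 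A minor stylistic point: your parenthetical ``with at least one entry of each value'' is slightly more than you need---for $i<k$ you only use that some entry of $a^{(i)}-a^{(k)}$ is $0$, and for $i>k$ that some entry is $1$---but both follow from $[L_i]\neq[L_k]$, so the conclusion stands.
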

\begin{proof}
    By induction, we may assume that $\Delta$ only consists of two lattice classes $[L_u]$ and $[L_w]$. Let $L_v$ be a representative of $v$. We may assume that $L_u\subset L_w\subset \pi^{-1}L_u$ and $L_u\subset L_v$ and $\pi^{-1}L_u\not\subset L_v$. If $L_w\subset L_v$ then $f_{u,v}=f_{w,v}\circ f_{u,w}$; otherwise $f_{w,v}=f_{u,v}\circ f_{w,u}$.
\end{proof}

By construction, we have $f_{u,u'}(\ep_u)=0$ for any $u\neq u'\in \Delta_M$. Combining the lemma above, we directly have the following description of $J_{u,M}$: 

\begin{cor}
$J_{u,M}$ consists of all vertices $v$ in $\Gamma$ such that $f_{u',v}=f_{u,v}\circ f_{u',u}$ for all $u'\in \Delta_M$. In particular, 
suppose $u\neq w\in \Delta_M$, then $J_{u,M}\cap J_{w,M}=\emptyset$.
\end{cor}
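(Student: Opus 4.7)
The plan is to apply Lemma~\ref{lem:simplex unique factor through} to the simplex $\Delta_M\subset\fb_d$: for each $v\in\Gamma$ this supplies a unique vertex $u^*(v)\in\Delta_M$ through which every morphism $f_{u',v}$, $u'\in\Delta_M$, factors, so that $f_{u',v}=f_{u^*(v),v}\circ f_{u',u^*(v)}$. The first assertion of the corollary is then the statement that $J_{u,M}=(u^*)^{-1}(u)$, and the disjointness statement is immediate from $u^*$ being a single-valued function.

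For one inclusion, suppose $u\in\Delta_M$ and $u\neq u^*(v)$. The factorization above combined with the maximality of $u^*(v)$ (which gives $f_{u,u^*(v)}(\epsilon_u)=0$) yields
\[
f_{u,v}(\epsilon_u)=f_{u^*(v),v}\bigl(f_{u,u^*(v)}(\epsilon_u)\bigr)=f_{u^*(v),v}(0)=0,
\]
so $v\notin J_{u,M}$. For the converse direction, I would invoke the observation already noted in the excerpt that $M$ is generated as a sub-representation by $\{\epsilon_u\}_{u\in\Delta_M}$. Using Proposition~\ref{prop:ambient representation1} to realise all morphisms via well-defined paths in $Q(\Gamma)$, this gives $\epsilon_v=\sum_{u\in\Delta_M}c_u f_{u,v}(\epsilon_u)$ for some scalars $c_u$. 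The previous display kills every summand with $u\neq u^*(v)$, leaving $\epsilon_v=c_{u^*(v)}f_{u^*(v),v}(\epsilon_{u^*(v)})$. Since $M$ has dimension vector $\mathbf 1$, we have $\epsilon_v\neq 0$, forcing $f_{u^*(v),v}(\epsilon_{u^*(v)})\neq 0$, that is, $v\in J_{u^*(v),M}$.

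Combining the two directions, $J_{u,M}$ consists exactly of those $v$ with $u=u^*(v)$, which by the characterising property of $u^*(v)$ is precisely the set $\{v\in\Gamma\mid f_{u',v}=f_{u,v}\circ f_{u',u}\text{ for all }u'\in\Delta_M\}$; and for distinct $u,w\in\Delta_M$ the fibres $(u^*)^{-1}(u)$ and $(u^*)^{-1}(w)$ are automatically disjoint, which is the second assertion. I do not foresee a substantial obstacle: the argument is a short marriage of Lemma~\ref{lem:simplex unique factor through} with the definition of maximality, and the only mildly non-trivial input is the excerpt's statement that $M$ is generated by vectors at vertices of $\Delta_M$, which itself follows from a short induction using the one-dimensionality of each $M_v$ together with Lemma~\ref{lem:convex hull of two points}(3).
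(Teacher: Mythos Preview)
Your proof is correct and follows essentially the same approach as the paper. The paper's argument is very terse: it simply notes that $f_{u,u'}(\epsilon_u)=0$ for distinct $u,u'\in\Delta_M$ and then invokes Lemma~\ref{lem:simplex unique factor through}, leaving the reader to combine this with the earlier observation ``Apparently, $\Gamma=\bigcup_u J_{u,M}$'' (which is equivalent to your generation argument) to obtain both inclusions; you have unpacked exactly these steps explicitly.
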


It follows that one can write $M$ as $\oplus_{u\in\Delta_M}M_u$, where each $M_u$ is supported on $J_{u,M}$ and generated by $\ep_u$. Note also that $J_{u,M}$ only depends on $\Delta_M$ and $u\in\Delta_M$, hence $\mathcal S_M$ consists of representations $[M']$ such that $\Delta_{M'}=\Delta_M$. 

\begin{cor}
    Given two sub-representations $M,M'$ of $M_\Gamma$ with dimension $\mathbf 1$, we have $M\simeq M'$ if and only if $\Phi(M)=\Phi(M')$. As a result, the quiver stratification on $\Gr(\mathbf 1,M_\Gamma)$ agrees with the rank-vector and Bruhat stratifications of $LG_1(\Gamma)_0$.
\end{cor}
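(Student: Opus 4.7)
The plan is to bootstrap directly off the structural results established in the preceding paragraphs of this section. The forward implication $M\simeq M'\Longrightarrow \Phi(M)=\Phi(M')$ is formal, since the rank vector is by definition an invariant of the isomorphism class of a quiver representation. All the content is therefore in the reverse implication, which I would establish by first recovering the simplex $\Delta_M$ from $\Phi(M)$.

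For this, I would exploit the fact that $M$ has dimension $\mathbf 1$, so every diagonal entry satisfies $\Phi_{v,v}(M)=1$ and every off-diagonal entry $\Phi_{v,u}(M)$ lies in $\{0,1\}$. By the definition of maximality given just before Lemma~\ref{lem:simplex unique factor through}, a vertex $u\in\Gamma$ lies in $\Delta_M$ precisely when $f_{v,u}(\epsilon_v)=0$ for every $v\neq u$, which is equivalent to the condition $\Phi_{v,u}(M)=0$ for all $v\neq u$. Hence $\Delta_M$ can be read off directly from $\Phi(M)$. The preceding corollary then asserts that $J_{u,M}$ depends only on $u$ and $\Delta_M$; combined with the decomposition $M=\bigoplus_{u\in\Delta_M} M_u$ noted just before the statement, this shows the isomorphism class of $M$ is entirely determined by $\Delta_M$. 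Consequently $\Phi(M)=\Phi(M')$ forces $\Delta_M=\Delta_{M'}$, and hence $M\simeq M'$.

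For the second assertion, the agreement between the quiver stratification and the rank-vector stratification is immediate from the equivalence just proved, since each quiver stratum $\mathcal S_M$ coincides with the fiber $\Phi^{-1}(\Phi(M))$. When $\Gamma$ is contained in an apartment, the identification with the Bruhat stratification is then supplied by Lemma~\ref{lem:bruhat order and rank vector}. I do not anticipate any serious obstacle: the main combinatorial input (extracting $\Delta_M$ from $M$ and showing it determines the isomorphism class) has already been carried out in the two preceding lemmas and corollary, so the argument is largely bookkeeping. The only minor subtlety worth flagging is that the generators $\epsilon_v$ of the one-dimensional summands are defined only up to scalar, but the maximality criterion uses only the vanishing of $f_{v,u}(\epsilon_v)$, which is scalar-independent, so this ambiguity plays no role.
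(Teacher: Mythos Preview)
Your proof is correct and follows essentially the same approach as the paper. The paper does not give a separate proof of this corollary; its content is contained in the paragraph immediately preceding the statement, which establishes that $J_{u,M}$ depends only on $u$ and $\Delta_M$ and hence that $\mathcal S_M=\{[M']:\Delta_{M'}=\Delta_M\}$. Your proposal makes one step slightly more explicit than the paper does, namely that $\Delta_M$ itself is recoverable from $\Phi(M)$ via the maximality criterion $u\in\Delta_M\iff\Phi_{v,u}(M)=0$ for all $v\neq u$; this is indeed needed to pass from ``$\Delta_M$ determines $[M]$'' to ``$\Phi(M)$ determines $[M]$'', and your observation is the natural way to do it. Your caveat that the Bruhat stratification requires $\Gamma$ to lie in an apartment is appropriate, since Section~5 drops that standing hypothesis.
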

In order to get a correspondence between the quiver strata of $\Gr(\mathbf 1,M_\Gamma)$ and the set of faces in $\Theta(\Gamma)$, we prove the following:

\begin{lem}\label{lem:dimension 1 all subrep}
Let $\Delta$ be any face of $\Theta(\Gamma)$. There is a representation $[M]\in \mathrm{Gr}(\mathbf 1,M_\Gamma)$ such that $\Delta_M=\Delta$.
\end{lem}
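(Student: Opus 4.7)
The plan is to construct $M$ directly as a sub-representation generated by well-chosen vectors $\epsilon_u \in \overline{L}_u$ for $u \in \Delta$, and then propagate. By Lemma~\ref{lem:simplex unique factor through}, each $v \in \Gamma$ determines a unique vertex $u(v) \in \Delta$ through which every map $f_{u', v}$ with $u' \in \Delta$ factors; this partitions $\Gamma$ into subsets $J_u := \{v \in \Gamma : u(v) = u\}$ indexed by $u \in \Delta$. The idea is to pick $\epsilon_u$ for $u \in \Delta$ and set $\epsilon_v := f_{u(v), v}(\epsilon_{u(v)})$ for $v \notin \Delta$. For $u \in \Delta$ to be maximal with respect to the resulting $M$, the vector $\epsilon_u$ must lie in $K_u := \bigcap_{u' \in \Delta,\, u' \neq u} \ker f_{u, u'}$, so the first task is to show $K_u \neq 0$ and describe it concretely.

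To analyze $K_u$, I would choose representatives $L_{w_1} \subset L_{w_2} \subset \cdots \subset L_{w_k} \subset \pi^{-1} L_{w_1}$ of the vertices $\Delta = \{w_1, \dotsc, w_k\}$, write each $w_i$ in apartment coordinates as $(a_{i,1}, \dotsc, a_{i,d})$ following Convention~\ref{conv:alcove}, and record the unique ``jump step'' of each coordinate $l \in \{1, \dotsc, d\}$ around the cyclic chain. This induces a partition $\{1, \dotsc, d\} = L(w_1) \sqcup \cdots \sqcup L(w_k)$. A direct computation identifies $\ker f_{w_i, w_j}$ with $\mathrm{span}\{\overline{\pi^{-a_{i,l}} e_l} : a_{i,l} - a_{j,l} < n_{w_i, w_j}\}$, and intersecting over $j \neq i$ yields $K_{w_i} = \mathrm{span}\{\overline{\pi^{-a_{i,l}} e_l} : l \in L(w_i)\}$, which is nonzero since every block of the partition is nonempty.

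The main obstacle is the following key claim: for every $v \in J_{w_i} \setminus \{w_i\}$, $K_{w_i} \not\subset \ker f_{w_i, v}$. The condition $u(v) = w_i$ is equivalent to the numerical identity $n_{u', v} = n_{u', w_i} + n_{w_i, v}$ for each $u' \in \Delta \setminus \{w_i\}$, which in apartment coordinates forces the maxima on both sides to be attained simultaneously at some common index. Applying this with $u'$ equal to the cyclic successor of $w_i$ in $\Delta$ (namely $w_{i+1}$, or $w_1$ if $i = k$) pins the witness index down to lie in $L(w_i)$ itself; the corresponding basis vector of $K_{w_i}$ then has nonzero image under $f_{w_i, v}$. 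Since $\kappa$ is infinite and $J_u$ is finite, $\epsilon_u$ can be chosen in $K_u$ avoiding all the finitely many proper subspaces $K_u \cap \ker f_{u, v}$ simultaneously.

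Finally I would verify that $M = (\langle \epsilon_v \rangle)_v$ is a sub-representation with $\Delta_M = \Delta$. Compatibility $f_{u, w}(\epsilon_u) \in \langle \epsilon_w \rangle$ is immediate when $u = u(w)$; when $u \in \Delta$ with $u \neq u(w)$, factoring $f_{u, w}$ through $u(w)$ by Lemma~\ref{lem:simplex unique factor through} and using $\epsilon_u \in \ker f_{u, u(w)}$ gives $f_{u, w}(\epsilon_u) = 0$; and when $u \in \Gamma \setminus \Delta$, either $f_{u, w} \circ f_{u(u), u}$ equals $f_{u(u), w}$ and one reduces to the previous cases, or the minimal exponents add up strictly and the composition vanishes outright. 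Maximality of each $u \in \Delta$ follows by the same case analysis applied to $f_{v, u}(\epsilon_v)$ for $v \neq u$, while each $w \in \Gamma \setminus \Delta$ fails to be maximal because $f_{u(w), w}(\epsilon_{u(w)}) = \epsilon_w \neq 0$ with $u(w) \neq w$, giving $\Delta_M = \Delta$.
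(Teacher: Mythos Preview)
Your argument is correct and takes essentially the same route as the paper: partition $\Gamma=\bigsqcup_{u\in\Delta} J_u$ via Lemma~\ref{lem:simplex unique factor through}, pick each generator $\epsilon_u$ inside $K_u=\bigcap_{u'\neq u}\ker f_{u,u'}$ but outside every $\ker f_{u,w}$ with $w\in J_u$, and then verify that the generated sub-representation has dimension $\mathbf 1$ with $\Delta_M=\Delta$. Your final verification is in fact more thorough than the paper's ``straightforward to check''.

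The only notable difference is how the key non-inclusion $K_{w_i}\not\subset\ker f_{w_i,v}$ is justified. Your apartment-coordinate computation works, but it implicitly assumes $v$ lies in the chosen apartment; since $\Gamma$ is an \emph{arbitrary} convex configuration in this section, you should remark that for each fixed $v$ one may pass to an apartment of $\mathfrak B_d$ containing both the simplex $\Delta$ and the vertex $v$ (which exists by the building axioms) before running the coordinate argument. The paper bypasses coordinates entirely with a factorization argument: writing a cyclic neighbour of $w_i$ in $\Delta$ as $w'$, the nested kernels give $K_{w_i}=\ker f_{w_i,w'}=\operatorname{Im} f_{w',w_i}$ by Lemma~\ref{lem:convex hull of two points}(2), and the defining factorization $f_{w',v}=f_{w_i,v}\circ f_{w',w_i}$ of $v\in J_{w_i}$ is nonzero, so $f_{w_i,v}|_{K_{w_i}}\neq 0$ immediately. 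This is shorter and sidesteps the apartment issue.
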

\begin{proof}
For each $u\in\Delta$, let $J_u$ be the subset of $\Gamma$ consisting of vertices $w$ such that $f_{u',w}=f_{u,w}\circ f_{u',u}$ for all $u'\in\Delta$. According to Lemma~\ref{lem:simplex unique factor through}, $\Gamma=\bigsqcup_{u\in\Delta}J_u$. We denote the cycle $\vec\Delta$ by $(u_1,\dotsc,u_n,u_1)$, where the subscripts are considered modulo $n$.

Now for any $u_i\in\Delta$ and $w\in J_{u_i}$, we have $f_{u_i,w}\neq f_{u_{i-1},w}\circ f_{u_i,u_{i-1}}$, since otherwise $f_{u_{i-1},w}=f_{u_i,w}\circ f_{u_{i-1},u_i}=f_{u_{i-1},w}\circ f_{u_i,u_{i-1}}\circ f_{u_{i-1},u_i}=0$ by Proposition~\ref{prop:ambient representation1} (3).
Therefore, $$\bigcup_{j\neq i}\ker f_{u_i,u_j}=\ker f_{u_i,u_{i-1}}\not\subset \ker f_{u_i,w}.$$
Therefore, we can take a vector $\ep_i\in \ker f_{u_i,u_{i-1}}\backslash\bigcup_{w\in J_u}\ker f_{u_i,w}$. It is then straightforward to check that $(\ep_i)_{1\leq i\leq n}$ generates a dimension-$\mathbf 1$ sub-representation $M$ of $M_\Gamma$ such that $\Delta_M=\Delta$.
\end{proof}

We now state the main conclusion of this section.

\begin{prop}\label{prop:dimension 1 stratification}
Let $\Gamma$ be a convex lattice configuration and $M,M'$ be two dimension-$\mathbf 1$ sub-representations of $M_\Gamma$. The following are equivalent:
\begin{enumerate}
    \item $\mathcal S_{M}\subset \mathcal S_{M'}^c$;
    \item $\Phi(M)\leq \Phi(M')$;
    \item $\Delta_{M'}\subset \Delta_M$.
\end{enumerate}
In particular, the Bruhat order on $LG_1(\Gamma)_0=\Gr(\mathbf 1,\Gamma)$ agrees with the topological order and the rank-vector order. 
\end{prop}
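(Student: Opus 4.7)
The plan is to prove $(1) \Rightarrow (2) \Rightarrow (3) \Rightarrow (1)$. The first two steps are short: $(1) \Rightarrow (2)$ follows from upper semi-continuity of the image rank of each morphism $f_{u,v}$ restricted to a $1$-dimensional sub-representation, applied to a DVR-valued specialization inside $\cS_{M'}^c$ whose generic point lies in $\cS_{M'}$. For $(2) \Rightarrow (3)$, pick $u \in \Delta_{M'}$; by maximality $\Phi_{v,u}(M') = 0$ for every $v \neq u$, whence the hypothesis forces $\Phi_{v,u}(M) = 0$ as well, i.e.\ $f_{v,u}(\ep_v) = 0$ for all $v \neq u$, and therefore $u \in \Delta_M$.

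The remaining implication $(3) \Rightarrow (1)$ will be proved by induction on $|\Delta_M \setminus \Delta_{M'}|$, reducing to the base case $\Delta_M = \Delta_{M'} \cup \{u\}$. For this, order the vertices of the simplex $\Delta_M$ as $w_0, w_1, \dotsc, w_m$ with $L_{w_0} \subset L_{w_1} \subset \cdots \subset L_{w_m} \subset \pi^{-1} L_{w_0}$, say $u = w_k$, and put $w_u := w_{k-1}$ (cyclically, with $w_{-1} := w_m$). A direct $\pi$-power calculation on the $\ov L_{w_i}$ identifies $w_u$ as the unique vertex in $\Delta_{M'}$ through which every $f_{w', u}$, $w' \in \Delta_{M'}$, factors, matching the abstract vertex produced by Lemma~\ref{lem:simplex unique factor through}. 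Using Lemma~\ref{lem:convex hull of two points}~(2) together with the cyclic lattice chain, one verifies that $f_{w_u, u}$ maps $\bigcap_{v \in \Delta_M \setminus \{w_u, u\}} \ker f_{w_u, v}$ surjectively onto $\bigcap_{v \in \Delta_M \setminus \{u\}} \ker f_{u, v} \subseteq \ov L_u$; since every vertex in $\Delta_M \setminus \{u\}$ is maximal in $M$, the element $\ep_u$ lies in this target subspace. Hence there exists $\beta \in \ov L_{w_u}$ with $f_{w_u, u}(\beta) = \ep_u$ and $f_{w_u, v}(\beta) = 0$ for all $v \in \Delta_M \setminus \{w_u, u\}$.

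Having produced $\beta$, I would define a one-parameter family $\{M_t\}_{t \in \mathbb{A}^1_\kappa}$ of sub-representations of $M_\Gamma$ generated by $\ep_w$ for every $w \in \Delta_M \setminus \{w_u\}$ and by $\ep_{w_u} + t\beta$ at $w_u$. A vertex-by-vertex check shows $M_t$ has dimension vector $\mathbf 1$ for all $t$, with $M_0 = M$ and $\Delta_{M_t} = \Delta_M \setminus \{u\}$ for $t \neq 0$. This exhibits $[M] \in \cS_{M''}^c$ for any $M''$ representing the stratum with $\Delta_{M''} = \Delta_M \setminus \{u\}$, and iterating the construction closes the induction. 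Combined with Lemma~\ref{lem:dimension 1 all subrep} and the paragraph preceding it, we then obtain the equivalence of the three conditions, and a fortiori the coincidence of the Bruhat, rank-vector, and topological orders on $LG_1(\Gamma)_0$.

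The main technical obstacle is the identification of the image of $\bigcap_v \ker f_{w_u, v}$ under $f_{w_u, u}$ with the kernel intersection in $\ov L_u$ containing $\ep_u$: this rests on pairing kernels with images along the cyclic simplex via Lemma~\ref{lem:convex hull of two points}~(2), and although straightforward in small examples it requires keeping careful track of the $\pi$-powers cyclically. A secondary but necessary verification is that $M_t$ remains $\mathbf 1$-dimensional at vertices $v \notin \Delta_M$, where $\ep_v$ has the form $c \cdot f_{w, v}(\ep_w)$ for a unique $w \in \Delta_M$; the perturbation at $w_u$ only alters the generator at such $v$ within a line of $\ov L_v$, so one-dimensionality is preserved in the family.
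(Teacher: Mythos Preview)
Your proof is correct and follows the same three-implication cycle as the paper, including the induction on $|\Delta_M\setminus\Delta_{M'}|$ and the one-parameter deformation at the predecessor of $u$ in the cycle $\vec\Delta_M$; the paper writes $\vec\Delta_M=(u=u_1,\dotsc,u_n,u_1)$, picks any $\ep\in\overline L_{u_n}$ with $f_{u_n,u_1}(\ep)=\ep_1$, and sets $M_t$ to be generated by $(\ep_i)_{2\le i<n}$ together with $\ep_n+t\ep$.

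The only substantive difference is that you work harder than necessary to produce $\beta$. Your surjectivity claim about $f_{w_u,u}$ mapping $\bigcap_{v}\ker f_{w_u,v}$ onto $\bigcap_v\ker f_{u,v}$ is true, but the extra kernel conditions on $\beta$ are automatic for \emph{any} preimage of $\ep_u$: since every path in $\vec\Delta_M$ from $w_u$ to $v\in\Delta_M\setminus\{w_u,u\}$ passes through $u$, one has $f_{w_u,v}(\beta)=f_{u,v}(f_{w_u,u}(\beta))=f_{u,v}(\ep_u)=0$ by maximality of $v$. So the paper simply invokes $\ker f_{u,w_u}=\im f_{w_u,u}$ (Lemma~\ref{lem:convex hull of two points}(2)) to get a lift and proceeds. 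Your version, by keeping $\ep_u$ among the generators, gives a family defined at $t=0$ with $M_0=M$ on the nose; the paper drops $\ep_u$ and takes a limit instead. Both are fine, but note (as in the paper) that $M_t$ is $\mathbf 1$-dimensional only for generic $t$ when $v\in J_{w_u,M}$, not literally for all $t$.
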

\begin{proof}
(1)$\Rightarrow$(2) follows from semi-continuity of the dimension function; and (2)$\Rightarrow$(3) is immediate from construction. We now show that (3) implies (1). By induction we may assume $\Delta_M=\Delta_{M'}\cup \{u\}$. We may still write the cycle $\vec\Delta_M$ as $(u=u_1,\dotsc,u_n,u_1)$ as in the proof of Lemma~\ref{lem:dimension 1 all subrep}.

Suppose $M$ is generated by $(\ep_i\in\overline L_{u_i})_{1\leq i\leq n}$. Since $f_{u_1,u_n}(\ep_1)=0$, there is an $\ep\in \overline L_{u_n}$ such that $f_{u_n,u_1}(\ep)=\ep_1$. For each $t\neq 0\in \kappa$ let $M_t$ be the representation generated by $(\ep_i\in\overline L_{u_i})_{2\leq i< n}$ and $\ep_n+t\ep\in\overline L_{u_n}$. Then it is easy to check that $\Delta_{M_t}=\Delta_{M'}$ and $M_t$ approaches $M$ as $t$ approaches $0$. Hence $[M]\in\mathcal S_{M'}^c$.   
\end{proof}


\section{Application in degeneration of curves and linear series}\label{sec:lls}
In \cite{he2023degenerations}, we investigated the connection between the theories of linked Grassmannians and limit linear series on nodal curves. As an application of the irreducibility of linked Grassmannians associated to locally linearly independent lattice configurations, we proved the smoothing theorem of certain limit linear series on a curve consisting of three rational components. In this paper, using the theory of locally weakly independent configurations, we prove the smoothing theorem for a new family of curves. See \S~\ref{subsec:smoothing}. We start by recalling the basic theory of limit linear series.  Throughout this section, we assume that the discrete valuation ring $R$ is complete.

\subsection{Preliminaries}
We recall the notion of limit linear series on a nodal curve in a \textit{regular smoothing family}, and briefly explain its connection with linked Grassmannians. Unless otherwise stated, all definitions in this subsection are from \cite{osserman2019limit}. We also refer~\cite[\S~4]{he2023degenerations} for a more generalized and detailed illustration, and use the notation thereof. 
\begin{defn}\label{defn:smoothing family} 
We say that a flat and proper family $\varphi\colon X\rightarrow \spec(R)$ of curves is a \textit{regular smoothing family} if (1) $X$ is regular and the generic fiber $X_\eta$ is smooth; (2) the special fiber $X_0$ of $\varphi$ is a (split) nodal curve; and (3) $\varphi$ admits sections through every component of $X_0$.
\end{defn}

Note that condition (3) is satisfied automatically if $R$ is complete by \cite[Proposition 10.1.40(a)]{liu2002algebraic}.
Let $G$ be the dual graph of $X_0$ and $Z_v$ the irreducible component of $X_0$ corresponding to $v\in V(G)$. 
The set of multidegrees on $X_0$ is in one-to-one correspondence with the set of divisors on $G$ in a natural way. We say that a multidegree $w$ is obtained from $w'$ by a \textit{twist} at $v\in V(G)$ if the divisor associated to $w$ is obtained from $w'$ as follows: if $v'$ is adjacent to $v$, we increase the degree of $w'$ at $v'$ by one; we decrease the degree of $w'$ at $v$ by the number of vertices adjacent to $v$. In this case, we also say that $w'$ is obtained from $w$ by a \textit{negative twist} at $v$. Geometrically, if a line bundle $\mathscr L$ on $X$ has multidegree $w$ on $X_0$, then $w$ twisted (resp. negatively twisted) at $v$ is the multidegree of $\mathscr L(Z_v)$ (resp. $\mathscr L(-Z_v)$), that is, the multidegree of $\mathscr L$ twisted (resp. negatively twisted) by the irreducible component of $X_0$ corresponding to $v$. Note that the twist of $w$ is commutative, and twisting all vertices of $G$ preserves $w$.

\begin{defn}\label{defn:concentrated multidegree}
A multidegree $w$ is \textbf{concentrated} on $v$ if there is an ordering on $V(G)$ starting at $v$, and such that for each subsequent vertex $v'$, we have that $w$ becomes negative in vertex $v'$ after taking the composition of the negative twists at all previous vertices. 
\end{defn}

Let $V(G(w_0))$ be the set of all multidegrees on $G$ obtained from $w_0$ by a sequence of twists. 
Choose a tuple $(w_v)_{v\in V(G)}\subset V(G(w_0))$ of multidegrees on $X_0$ such that $w_v$ is concentrated on $v$. Let $V(\ov G(w_0))$ be the (finite) subset of $V(G(w_0))$ consisting of multidegrees $w$ in $V(G(w_0))$ such that, for all $v\in V(G)$, $w_v$ can be obtained from $w$ by twisting vertices other than $v$.

Suppose $\mathscr L$ is a line bundle on $X_0$ of multidegree $w_0$. For any $w\in V(G(w_0))$ we get a line bundle $\mathscr L_w$ with multidegree $w$ on $X_0$ as follows: if $w$ is obtained from $w_0$ by subsequently twisting $a_v$ times at $v$, where the $a_v$'s are non-negative and at least one of them equals $0$, then we set 
$$\mathscr L_w=\mathscr L\otimes\mathscr O_X(\sum a_vZ_v)|_{X_0}= \mathscr L\otimes \Big(\bigotimes_v \mathscr (\mathscr O_X(Z_v)|_{X_0})^{\otimes a_v}\Big).$$
Given another $w'\in V(G(w_0))$. Suppose $w'$ is obtained from $w$ by subsequently twisting $a'_v$ times at $v$, with the $a'_v$'s satisfying similar conditions as the $a_v$'s. We denote by 
$f_{w,w'}\colon \mathscr L_w\rightarrow \mathscr L_{w'}\simeq\mathscr L_w\otimes\mathscr O_X(\sum a'_vZ_v)|_{X_0}$
the natural morphism induced by $\mathscr O_X\hookrightarrow \mathscr O_X(\sum a'_vZ_v)$.
In general, to define $\mathscr L_w$ and $f_{w,w'}$ without referring $X$, the tuple $(\mathscr O_X(Z_v)|_{X_0})_v$ 
is replaced by the so-called \textit{enriched structure} on $X_0$. 

\begin{defn}\label{defn:limit linear series}
Let $X_0$ and $G$ be as above. Fix a multidegree $w_0$ with total degree $d$, and fix a number $r<d$.  A \textit{limit linear series} on $X_0$, usually denoted by a \textit{limit} $\mathfrak g^r_d$, consists of a line bundle $\mathscr L$ of multidegree $w_0$ on $X_0$ together with subspaces $V_v\subset H^0(X_0,\mathscr L_{w_v})$ of dimension $(r+1)$  such that for all $w\in V(\overline G(w_0))$, the kernel of the linear map 
\begin{equation}\label{eq:limit linear series}  H^0(X_0,\mathscr L_w)\to\displaystyle\bigoplus_{v\in V(G)}H^0 (X_0,\mathscr L_{w_v})/V_v 
\end{equation}
induced by $\op_vf_{w,w_v}$  has dimension at least $r+1$.
\end{defn}

Note that the definition above is independent of the choice of $(w_v)_v$. A closely related concept is \textit{linked linear series}, which, as we shall see shortly, provides a bridge between limit linear series and linked Grassmannians. We now recall its definition, which was first introduced in  {\cite{Ohrk}}.

\begin{defn}\label{defn:linked linear series}
Use the same notation as in Definition \ref{defn:limit linear series}. A \textit{linked linear series}, or a \textit{linked} $\mathfrak g^r_d$, on $X_0$ consists of a line bundle $\mathscr L$ on $X_0$ of multidegree $w_0$ together with subspaces $V_w\subset H^0(X_0,\mathscr L_w)$ of dimension $(r+1)$ for all $w\in V(\overline G(w_0))$ such that 
\begin{equation}\label{eq:linked linear series}
   f_{w,w'}(V_w)\subset V_{w'}\mathrm{\ for\ all\ } w,w'\in V(\overline G(w_0)). 
\end{equation}
\end{defn}

It is possible, according to \cite[Remark 4.12]{he2023degenerations}, to choose $(w_v)_v$ such that $w_v\in V(\ov G(w_0))$ for all $v$. In this case, given a linked linear series $(V_w)_{w\in V(\overline G(w_0))}$, we get immediately a limit linear series by setting $(V_v)_{v\in V(G)}=(V_{w_v})_{v\in V(G)}$. This induces a forgetful map from the moduli space of linked linear series to the moduli space of limit linear series. We next explain how the constructions above are related to linked Grassmannians. Assume in the sequel that $w_v\in V(\ov G(w_0))$ for all $v$. 



Take an effective divisor $D=\sum_{v\in V(G)}D_v$ with relative degree $d'$ on $X$ such that $D_v$ is a union of sections of $X/R$ that pass through $Z_v$ and avoid the nodes of $X_0$.
Assume $D$ is ``\textit{sufficiently ample}" in the sense that $d_v:=\deg D_v$ is big enough relative to all $w$ in $V(\overline G(w_0))$ and the genus of $Z_v$. Then tensoring with $\mathscr O_X(D)$ kills all first cohomologies of degree-$d$ line bundles on $X_\eta$ and line bundles on $X_0$ with multidegrees in $V(\overline G(w_0))$. Let $D_\eta$ be the generic fiber of $D$. 



Let $\mathcal L$ be a line bundle on $X_\eta$ of degree $d$. For $w\in V(\overline G(w_0))$ let $\mathcal L'_w$ be the extension of $\mathcal L$ to $X$ with multidegree $w$ on $X_0$ and $\mathcal  L_w=\mathcal L'_w(D)$. Then $H^0(X, \mathcal L_w)$ is a lattice in 
$H^0(X_\eta,\mathcal L(D_\eta))$, and its reduction over $\kappa$ is identified with $H^0(X_0,\mathcal L_w|_{X_0})$. Moreover,
the configuration $\{[H^0(X, \mathcal L_w)]\}_{w\in V(\overline G(w_0))}$ is the convex hull of $\{[H^0(X, \mathcal L_{w_v})]\}_{v\in V(G)}$ for suitably choosen $(w_v)_v$, see \cite[\S~4.2]{he2023degenerations}. We denote this configuration by $\Gamma_\mathcal L$. Note that different degrees $w$ may induce the same lattice class $[H^0(X, \mathcal L_w)]$. Let $\mathscr L_{w_0}$ be the restriction of $\mathcal L'_{w_0}$ on $X_0$. Then the set of linked linear series with underlying line bundle $\mathscr L_{w_0}$ is identified with a determinantal locus in $LG_{r+1}(\Gamma_\mathcal L)_0$. 
We have the following theorem, which is part of \cite[Theorem 4.17]{he2023degenerations}.

\begin{thm}\label{thm:smoothing of lls}
Let $X/R$ be a smoothing family with special fiber $X_0$. 
Let $w_0$ be a multidegree on $G$ of total degree $d$, and choose suitable concentrated multidegrees $(w_v)_v$. Suppose 
\begin{enumerate}
    \item every limit linear series of multidegree $w_0$ lifts to a linked linear series;
    \item the linked Grassmannians $LG_{r+1}(\Gamma_\mathcal L)$ are irreducible for all line bundles $\mathcal L$ above.
\end{enumerate}
If the moduli space $\mathcal G_0$ of limit $\mathfrak g^r_d$s of multidegree $w_0$ on $X_0$ has dimension $\rho=g-(r+1)(g-d+r)$ at a given point, then the corresponding limit linear series arises as the limit of a linear series on the geometric generic fiber of $X$.
\end{thm}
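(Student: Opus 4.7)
My plan is to construct a relative moduli space $\mathcal{G}^{\mathrm{lk}}\to\spec(R)$ of \emph{linked} $\mathfrak{g}^r_d$'s whose generic fiber is the classical Brill-Noether locus $G^r_d(X_\eta)$ and whose closed fiber surjects onto $\mathcal{G}_0$, and then to show that any lift of the given point $p\in\mathcal{G}_0$ lies in the closure of the generic fiber. Concretely, $\mathcal{G}^{\mathrm{lk}}$ parametrizes pairs $(\mathcal{L},(V_w)_w)$ with $\mathcal{L}$ a line bundle of generic degree $d$ on $X/R$ and $V_w\subset H^0(X,\mathcal{L}_w)$ of rank $r+1$, satisfying the linking conditions (\ref{eq:linked linear series}) on the closed fiber; on $X_\eta$ all twists $\mathcal{L}_w$ coincide so $\mathcal{G}^{\mathrm{lk}}_\eta$ recovers $G^r_d(X_\eta)$. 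Assumption (1) furnishes the surjection $\mathcal{G}^{\mathrm{lk}}_0\twoheadrightarrow\mathcal{G}_0$, so I may pick a lift $\tilde p\in\mathcal{G}^{\mathrm{lk}}_0$ of $p$.

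Next, I will describe $\mathcal{G}^{\mathrm{lk}}$ locally near $\tilde p$ as a determinantal subscheme of a linked Grassmannian bundle over an \'etale neighborhood in $\Pic(X/R)$, cut out by the vanishing of the canonical maps $V_w\hookrightarrow H^0(\mathcal{L}_w(D))\twoheadrightarrow H^0(\mathcal{L}_w(D)|_D)$ arising from the auxiliary divisor $D$ used in defining $\Gamma_\mathcal{L}$. A standard Brill-Noether dimension count then forces every local irreducible component of $\mathcal{G}^{\mathrm{lk}}$ to have dimension at least $\rho+1$. Assumption (2), combined with Theorem \ref{thm:flatness}, tells me that $LG_{r+1}(\Gamma_\mathcal{L})$ is irreducible, $R$-flat, Cohen-Macaulay and reduced, so the ambient bundle is itself equidimensional and flat over $\spec(R)$, ensuring the above bound is sharp at the generic point.

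With this in place, the crucial dimension count runs as follows. The fibers of the forgetful map $\mathcal{G}^{\mathrm{lk}}_0\to\mathcal{G}_0$ over $p$ should be zero-dimensional -- each intermediate $V_w$ for $w\notin\{w_v\}_v$ being forced by the $(V_{w_v})_v$ through the linking relations at a Brill-Noether-generic point -- which together with the hypothesis $\dim_p\mathcal{G}_0=\rho$ yields $\dim_{\tilde p}\mathcal{G}^{\mathrm{lk}}_0\le\rho$. Any irreducible component $C$ of $\mathcal{G}^{\mathrm{lk}}$ through $\tilde p$ then has $\dim C\ge\rho+1$, so $C$ cannot lie inside the closed fiber; consequently $C$ dominates $\spec(R)$. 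Thus $\tilde p\in\overline{C_\eta}\subset\overline{\mathcal{G}^{\mathrm{lk}}_\eta}=\overline{G^r_d(X_\eta)}$, and pushing down to $\mathcal{G}_0$ exhibits $p$ as the specialization of an honest linear series on the geometric generic fiber.

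The principal obstacle is precisely the zero-dimensionality of the lifting fibers at points where $\mathcal{G}_0$ attains its expected dimension: a priori many linked refinements can support a single limit linear series, and I must rule out positive-dimensional families of them near $\tilde p$. I expect this to follow from the structural results of Section \ref{sec:weakind} -- the decomposition of sub-representations of $M_\Gamma$ (Theorem \ref{thm:decomposition}) and the resulting identification of the quiver, rank-vector and Bruhat stratifications (Theorem \ref{thm:stratification}) -- which reduce the question to a combinatorial check on the quiver representation underlying $\tilde p$, while reducedness and Cohen-Macaulayness of $LG_{r+1}(\Gamma_\mathcal{L})$ prevent excess intersection from inflating the special fiber.
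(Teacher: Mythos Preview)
The paper does not prove this theorem; it is quoted as part of \cite[Theorem~4.17]{he2023degenerations}, so there is no in-paper argument to compare against directly. Your overall architecture---build a relative moduli space of linked $\mathfrak g^r_d$'s over $\spec(R)$, realize it determinantally inside a linked-Grassmannian bundle over the relative Picard scheme, extract the lower bound $\dim\ge\rho+1$ on every component, and play this off against $\dim_p\mathcal G_0=\rho$---is the standard one and is indeed what underlies the cited result.

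Two specific steps, however, do not go through as you have written them.

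First, the appeal to Theorem~\ref{thm:flatness} is unjustified: that theorem assumes $\Gamma_{\mathcal L}$ is locally weakly independent, whereas the present statement assumes only irreducibility of $LG_{r+1}(\Gamma_{\mathcal L})$. Fortunately this invocation is also unnecessary. Irreducibility alone, together with the fact that the generic fibre is the ordinary Grassmannian, forces $LG_{r+1}(\Gamma_{\mathcal L})$---and hence the ambient bundle over $\Pic$---to have pure relative dimension $(r+1)(h^0-r-1)$, and that is all the determinantal codimension estimate requires to produce $\dim\ge\rho+1$. Cohen--Macaulayness and reducedness play no role in that bound.

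Second, your proposed resolution of the ``principal obstacle'' fails for the same reason: Theorems~\ref{thm:decomposition} and~\ref{thm:stratification} again require local weak independence, which is not among the hypotheses. More to the point, the forgetful fibre over a limit linear series is in general \emph{not} zero-dimensional: whenever the kernel in (\ref{eq:limit linear series}) has dimension strictly larger than $r+1$ at some intermediate $w$, there is a positive-dimensional family of compatible choices of $V_w$, so no purely combinatorial check on the quiver representation at $\tilde p$ can force finiteness. The argument in \cite{he2023degenerations} organizes the comparison between $\mathcal G^{\mathrm{lk}}_0$ and $\mathcal G_0$ differently, using assumption~(1) to transfer the dimension bound rather than to control individual fibres; you should consult that reference for the precise mechanism rather than attempt to bound the fibre dimension directly.
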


The conclusive part of Theorem~\ref{thm:smoothing of lls} is often referred to as the smoothing theorem for limit linear series. On the other hand, according to Lemma~\ref{lem:lifting limit linear series} and Theorem~\ref{thm:flatness}, we have the following corollary asserting the conditions (1) and (2) of Theorem~\ref{thm:smoothing of lls} in special cases. Compare to the last statement of \cite[Theorem 4.17]{he2023degenerations}.

\begin{cor}\label{cor:conditions}
If $\Gamma_\mathcal L$ is locally weakly independent for all line bundles $\mathcal L$ above, then both (1) and (2) in Theorem~\ref{thm:smoothing of lls} are satisfied.
\end{cor}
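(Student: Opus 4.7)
The plan is to verify conditions (1) and (2) of Theorem~\ref{thm:smoothing of lls} separately, each by a direct application of the indicated result to the lattice configuration $\Gamma_\mathcal L$; this mirrors the strategy used in \cite{he2023degenerations} for locally linearly independent configurations, with Lemma~\ref{lem:lifting limit linear series} and Theorem~\ref{thm:flatness} providing the appropriate generalizations.

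For condition (2), the argument is one line: by hypothesis, every $\Gamma_\mathcal L$ is locally weakly independent, so Theorem~\ref{thm:flatness} directly asserts that $LG_{r+1}(\Gamma_\mathcal L)$ is irreducible, as required.

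For condition (1), I would translate the definition of a limit linear series into the hypothesis of Lemma~\ref{lem:lifting limit linear series}. Given a limit $\mathfrak g^r_d$ on $X_0$ of multidegree $w_0$ consisting of a line bundle $\mathscr L$ together with subspaces $V_v\subset H^0(X_0,\mathscr L_{w_v})$ of dimension $r+1$, choose an extension $\mathcal L$ of $\mathscr L$ to $X_\eta$ as in the setup preceding Theorem~\ref{thm:smoothing of lls}. Under the identifications $\overline L_w\cong H^0(X_0,\mathscr L_w)$ afforded by the sufficient ampleness of $D$, the quiver morphisms $f_{w,w'}$ of $M_{\Gamma_\mathcal L}$ match the natural maps $H^0(X_0,\mathscr L_w)\to H^0(X_0,\mathscr L_{w'})$ appearing in Definition~\ref{defn:limit linear series}. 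Taking $I\subset \Gamma_\mathcal L$ to be the set of lattice classes $[H^0(X,\mathcal L_{w_v})]$ for $v\in V(G)$, the vector space $V'_u$ of Lemma~\ref{lem:lifting limit linear series} is exactly the kernel of the map in equation~(\ref{eq:limit linear series}); the limit-linear-series hypothesis is precisely that each such kernel has dimension at least $r+1$. Applying Lemma~\ref{lem:lifting limit linear series} (with $r$ replaced by $r+1$) produces a sub-representation $(U_w)_{w}$ of $M_{\Gamma_\mathcal L}$ of dimension $\mathbf{r+1}$ with $U_{w_v}=V_v$ for all $v$. Such a sub-representation is, by Definition~\ref{defn:linked linear series}, exactly a linked $\mathfrak g^r_d$ lifting the original limit linear series.

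The main technical care point will be the indexing: different elements of $V(\overline G(w_0))$ may yield the same lattice class in $\Gamma_\mathcal L$, so one must confirm that the sub-representation output by the lemma assembles into well-defined data $(V_w)_{w\in V(\overline G(w_0))}$ compatible with the morphisms $f_{w,w'}$ required by Definition~\ref{defn:linked linear series}. This compatibility is built into the construction of $\Gamma_\mathcal L$ recalled above and was verified in Section~4.2 of \cite{he2023degenerations}, so it may be quoted rather than reproved. Apart from this bookkeeping, no further argument is needed, and the corollary follows.
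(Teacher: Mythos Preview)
Your proposal is correct and follows exactly the approach intended by the paper, which simply cites Lemma~\ref{lem:lifting limit linear series} and Theorem~\ref{thm:flatness} without further detail; you have accurately spelled out how condition (2) is immediate from Theorem~\ref{thm:flatness} and how condition (1) follows by matching the kernel in equation~(\ref{eq:limit linear series}) with the space $V'_u$ of Lemma~\ref{lem:lifting limit linear series}. The only minor wording issue is that $\mathcal L$ is not an ``extension of $\mathscr L$ to $X_\eta$'' but rather the generic-fiber restriction of a line bundle on $X$ extending $\mathscr L$; this does not affect the argument.
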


\subsection{The example}\label{subsec:smoothing}
In the situation of Theorem~\ref{thm:smoothing of lls}, let $G$ be the complete graph $K_n$ with $n$ vertices $v_0,v_1,...,v_{n-1}$, corresponding to the components $(Z_i)_{0\leq i\leq n-1}$ of $X_0$. Let $w_0$ be the multidegree which assigns $n-1-i$ to the vertex $v_i$. Let $w_j$ be the multidegree obtained from $w_0$ by twisting at $v_0,...,v_{j-1}$ for $1\leq j\le n-1$. Alternatively, $w_j$ can be described as follows: for $0\leq i\leq n-1$, if $i<j$, then $w_j$ has degree $j-i-1$ on $v_i$; otherwise it has degree $n-1+j-i$ on $v_i$.
See Figure~\ref{fig:lls} for the case $n=4$.
  \tikzset{every picture/.style={line width=0.75pt}} 
\begin{figure}[ht] 
\begin{tikzpicture}[x=0.45pt,y=0.45pt,yscale=-1,xscale=1]
\import{./}{figlls.tex}
\end{tikzpicture}
\caption{The complete graph $K_4$ and a set of concentrated multidegrees obtained from $w_0$ by twisting.}\label{fig:lls}
\end{figure}

One checks easily that $w_{v_i}:=w_i$ is concentrated on $v_i$ and we have $V(\overline G(w_0))=\{w_0,...,w_{n-1}\}$. 
For any $\mathcal L$ on $X_\eta$ of degree $\frac{n(n-1)}{2}$, by construction we have $\mathcal L'_{w_i}=\mathcal L'_{w_0}(\sum_{j<i}Z_i)$. Therefore, $\mathcal L_{w_i}=\mathcal L_{w_0}(\sum_{j<i}Z_i)$ and 
$$H^0(X,\mathcal L_{w_0})\subset H^0(X,\mathcal L_{w_1})\subset\cdots\subset H^0(X,\mathcal L_{w_{n-1}})\subset \pi^{-1}H^0(X,\mathcal L_{w_0}).$$
It follows that $\Gamma_\mathcal L$ is always a simplex, which is locally weakly independent. As a result, conditions (1) and (2) of Theorem~\ref{thm:smoothing of lls} are satisfied by Corollary~\ref{cor:conditions}. Thus, we have the smoothing theorem of limit linear series in this case.

\bibliographystyle{amsalpha}
\bibliography{myrefs}
\end{document}